\newtheorem{theorem}{Theorem}[section]
\newtheorem{remark}{Remark}
\newtheorem{proposition}[theorem]{Proposition}
\newtheorem{lemma}[theorem]{Lemma}
\newtheorem{corollary}[theorem]{Corollary}
\newtheorem{definition}{Definition}
\newtheorem{example}[theorem]{Example}
\newtheorem{notation}[theorem]{Notation}
\numberwithin{equation}{section}
\newcommand{\e}{{\rm e}}
\newcommand{\Cov}{{\rm Cov}}
\newcommand{\Var}{{\rm Var}}
\newcommand {\Z} {\mathbb{Z}}
\newcommand {\R} {\mathbb{R}}
\newcommand {\T} {\mathbb{T}}
\newcommand {\E} {\mathbb{E}}
\newcommand {\C} {\mathbb {C}}
\newcommand {\Cc} {\mathcal {C}}
\newcommand {\Zc} {\mathcal {Z}}
\newcommand{\Nc}{\mathcal N}
\newcommand {\Tb} {\mathbb {T}}
\newcommand{\Sc}{\mathcal S}
\newcommand{\supp}{\operatorname{supp}}
\newcommand{\new}[1]{{#1}}
\newcommand{\mnew}[1]{{#1}}
\def\paref#1{(\ref{#1})}
\newcommand{\tarc}{\mbox{\large$\frown$}}
\newcommand{\arc}[1]{\stackrel{\tarc}{#1}}
\title[Nodal intersections]{Asymptotic distribution of nodal intersections for arithmetic random waves}
\author{Maurizia Rossi and Igor Wigman}
\date{\today}
\begin{document}

\maketitle

\begin{abstract}

We study the nodal intersections number of random Gaussian toral Laplace eigenfunctions (``arithmetic random waves") against a fixed
smooth reference curve. The expected intersection number is proportional to the the square root of the eigenvalue times the length of curve, independent of its geometry. The asymptotic behaviour of the variance was addressed by Rudnick-Wigman; they found a precise
asymptotic law for ``generic" curves with nowhere vanishing curvature, depending on both its geometry and the angular distribution of
lattice points lying on circles corresponding to the Laplace eigenvalue. They also discovered that there exist peculiar ``static"
curves, with variance of smaller order of magnitude, though did not prescribe what the true asymptotic behaviour is in this case.

In this paper we study the finer aspects of the limit distribution of the nodal intersections number. For ``generic" curves we prove
the Central Limit Theorem (at least, for ``most" of the energies). For the aforementioned static curves we establish a
non-Gaussian limit theorem for the distribution of nodal intersections, and on the way find the true asymptotic behaviour
of their fluctuations, under
the well-separatedness assumption on the corresponding lattice points, satisfied by most of the eigenvalues.

\smallskip

\noindent\textbf{\sc Keywords and Phrases: Arithmetic random waves, nodal intersections, limit theorems, lattice points.}

\smallskip

\noindent \textbf{\sc AMS Classification: 60G60, 60B10, 60D05, 35P20, 58J50}
\end{abstract}

\section{Introduction and main results}

\subsection{Toral nodal intersections}

Let $\mathbb T=\R^{2}/\Z^2$ be the two-dimensional standard torus and $\Delta$ the Laplacian on $\mathbb T$. It is well-known
that the eigenvalues of $-\Delta$ (``energy levels") are all the number of the form $E_{n}=4\pi^{2}n$ where $n$ is an integer expressible
as a sum of two squares $$n\in S:=\{ a^{2}+b^{2}:\: a,\, b\in  \Z\}.$$ Given a number $n\in S$ we denote
$$\Lambda_{n} = \{\lambda=(\lambda_1,\lambda_2)\in\Z^{2}:\: \|\lambda\|^{2}:=\lambda_1^2 + \lambda_2^2=n\}$$ to be the collection of lattice points lying on the radius-$\sqrt{n}$ centred
circle in $\R^{2}$\new{, and $\mathcal{N}_{n}=|\Lambda_{n}|$ to be their number};
the eigenspace of $-\Delta$ corresponding to $E_{n}$ then admits the orthonormal basis
$$\left \lbrace e_{\lambda}(x):=e^{i2\pi\langle\lambda,x\rangle} \right \rbrace_{\lambda\in\Lambda_{n}},$$ $x=(x_{1},x_{2})\in\mathbb T$.
Equivalently, we may express every (complex-valued) function $T_{n}$ satisfying
\begin{equation*}
\Delta T_{n}+E_{n}T_{n}=0
\end{equation*}
as a linear combination
\begin{equation}
\label{defrf}
T_n(x) := \frac{1}{\sqrt{\mathcal N_n}} \sum_{\lambda\in \Lambda_n} a_\lambda \e_\lambda(x)
\end{equation}
(the meaning of the normalizing constant on the r.h.s. of \eqref{defrf} will clear in \S \ref{sec:arith rand wav});
$T_{n}$ is real-valued if and only if for every $\lambda\in\Lambda_{n}$ we have
\begin{equation}
\label{eq:a_-lambda=conj}
a_{-\lambda}=\overline{a_{\lambda}}.
\end{equation}
From now on, we assume $T_n$ in \paref{defrf} to be real-valued.
The {\em nodal line} of $T_{n}$ is the zero set $T_{n}^{-1}(0)$; under some generic assumptions $T_{n}^{-1}(0)$ is a {\em smooth curve}.
Given a fixed reference curve $\Cc\subset \mathbb T$ one is interested in the number $\Zc_{\Cc}(T_{n})$
of {\em nodal intersection}, i.e. the number of
intersections of $T_{n}^{-1}(0)$ with $\Cc$; one expects that for $n$ sufficiently big $\Zc_{\Cc}(T_{n})$ is {\em finite},
and it is believed that their number should be commensurable with $\sqrt{n}$. That it is indeed so was verified by Bourgain and Rudnick
\cite{BR12} for $\Cc$ with nowhere vanishing curvature; they showed that in this case
\begin{equation}
\label{eq:sqrt{n}<<len<<sqrt{n}}
n^{1/2-o(1)} \ll \Zc_{\Cc}(T_{n}) \ll \sqrt{n},
\end{equation}
and got rid \cite{B-R15} of the $o(1)$ in the exponent of the lower bound for ``most" $n$.

\subsection{Nodal intersections for arithmetic random waves}
\label{sec:arith rand wav}

We endow the linear space \eqref{defrf} with a Gaussian probability measure by taking the coefficients $a_{\lambda}$ in \eqref{defrf}
random variables. Namely, we assume
that the $a_{\lambda}$ are standard complex-Gaussian i.i.d. save to \eqref{eq:a_-lambda=conj}, all defined on the same probability space;
equivalently, $T_{n}$ is a real-valued centered Gaussian field on $\mathbb T$ with covariance function
\begin{equation}\label{cov}
r_n(x,y) :=\E[T_{n}(x)\cdot T_{n}(y)]= \frac{1}{\mathcal N_n} \sum_{\lambda\in \Lambda_n} \cos(2\pi \langle \lambda, x-y\rangle).
\end{equation}
The random fields $T_{n}$ are the ``arithmetic random waves" \cite{O-R-W, K-K-W, M-P-R-W}; by \eqref{cov} above,
$T_n$ are unit variance and stationary, and by the standard abuse of notation we may denote $$r_n(x-y):=r_n(x,y). $$

Given a smooth (finite) length-$L$ curve $\Cc\subset \Tb^2$ we define $\Zc_{n}=\Zc_{\Cc}(T_n)$ to be the number of nodal intersections of
$T_{n}$ against $\Cc$; it is a (a.s. finite \cite{R-W}) random variable whose distribution is the primary focus of this paper.
Rudnick and Wigman \cite{R-W} have computed its expected number to be
\begin{equation}\label{igor_mean}
\E[\mathcal Z_n] = \frac{\sqrt{E_n}}{\pi \sqrt{2}}L
\end{equation}
independent of the geometry of $\Cc$, and also studied the asymptotic behaviour of the variance of $\Zc_{n}$ for large values of $n$;
in order to be able to exhibit their results we require some number theoretic preliminaries. First, denote $\Nc_{n} = |\Lambda_{n}|$
to be the number of lattice points lying on the radius-$\sqrt{n}$ circle. While on one hand, along $n\in S$ the number $\Nc_{n}$ grows \cite{Lan} {\em on average} as $c_{RL}\cdot \sqrt{\log n}$ with $c_{RL}>0$ the Ramanujan-Landau constant, on the other hand $\Nc_{n}$ is subject to large and erratic fluctuations; for example for the (thin) sequence of primes $p\equiv 1 \mod 4$ the corresponding $\mathcal N_{p} = 8$ does not grow at all.
From this point on we will assume that $$\new{\mathcal N_{n}\rightarrow\infty}$$ (also holding for \new{a} density-$1$ sequence $\{n\}\subset S$); it is also easy to derive \new{~\cite[p. 130]{MV}} the bound
\begin{equation}
\label{eq:N=O(n^o(1))}
\Nc_{n} = O(n^{o(1)}).
\end{equation}

We will also need to consider the angular distribution of $\Lambda_{n}$; to this end we define the probability measures
\begin{equation}\label{muenne}
\mu_n := \frac{1}{\Nc_{n}} \sum_{\lambda\in \Lambda_n} \delta_{\lambda/\sqrt n},
\end{equation}
$n\in S$, on the unit circle $\Sc^{1}\subset \R^{2}$.
It is well-known that for a density-$1$ subsequence $\{n_j\}\subset S$ the corresponding lattice points $\Lambda_{n_{j}}$ are asymptotically
equidistributed in the sense that the corresponding measures are weak-$*$ convergent
to the uniform measure on $\Sc^{1}$, \new{usually denoted
\begin{equation}
\label{eq:mujk=>dt/2pi equid}
\mu_{n_j} \Rightarrow \frac{d\theta}{2\pi}.
\end{equation}
}To the other extreme, there exists \cite{Ci} a (thin) sequence $\{n_{j} \}\subset S$ with
angles all concentrated around $\pm 1,\pm i$
\begin{equation}
\label{eq:munj conv Cil}
\mu_{n_j} \Rightarrow \tau_{0}:=\frac{1}{4}\left( \delta_{\pm 1}+\delta_{\pm i}  \right)
\end{equation}
(thinking of $\Sc^{1}\subset \C$), and the other partial weak-$*$ limits \new{of $\{\mu_{n}\}_{n\in S}$} were partially classified \cite{K-K-W, K-W}
\new{(``attainable measures")}.
\new{The measure $\tau_{0}$ (``Cilleruelo measure") together with its rotation by $\frac{\pi}{4}$ (``tilted Cilleruelo") are the two uniques
measures supported on $4$ points only, that are invariant w.r.t. rotation by $\frac{\pi}{2}$; both $\tau_{0}$ and its $\frac{\pi}{4}$-tilt
are weak-$*$ partial limits of $\{\mu_{n}\}_{n\in S}$.}

Back to the variance of $\Zc_{n}$, let $$\gamma=(\gamma_{1},\gamma_{2}) :[0,L]\rightarrow\mathbb T$$ be the arc-length parametrization of $\Cc$. Rudnick and Wigman
\cite[Theorem 1.1]{R-W} found that\footnote{Initially under another technical assumption, subsequently lifted in ~\cite{R-W-Y}.}
\begin{equation}
\label{igor_var}
\Var(\mathcal Z_n) = (4B_{\mathcal C}(\Lambda_n)-L^2)\cdot \frac{n}{\mathcal N_n} + O\left ( \frac{n}{\mathcal N_n^{3/2}}     \right ),
\end{equation}
where
\begin{equation}\label{B}
\begin{split}
B_{\mathcal C}(\Lambda_n) &:= \int_0^L \int_0^L \frac{1}{\mathcal N_n}
\sum_{\lambda\in \Lambda_n} \left \langle \frac{\lambda}{|\lambda|}, \dot \gamma(t_1) \right \rangle^2\cdot \left \langle \frac{\lambda}{|\lambda|}, \dot \gamma(t_2) \right \rangle^2\,dt_1 dt_2\\&=\int_0^L \int_0^L \int\limits_{\Sc^{1}}
\left \langle \theta, \dot \gamma(t_1) \right \rangle^2 \cdot \left \langle \theta, \dot \gamma(t_2) \right \rangle^2\,
d\mu_{n}(\theta)dt_{1}dt_{2}.
\end{split}
\end{equation}
The leading term $$4B_{\mathcal C}(\Lambda_n)-L^2$$ fluctuates \cite[\S7]{R-W} in the interval $[0,L^{2}]$ depending on both the angular distribution of the lattice points $\Lambda_{n}$ and the geometry of $\Cc$. One may also define $B_{\Cc}(\mu)$ for any probability measure $\mu$ on
$\Sc^{1}$, \new{invariant w.r.t. rotation by $\frac{\pi}{2}$,} with $\mu$ in place of $\mu_{n}$ on the r.h.s. of \eqref{B}.

\new{
In general, $B_{\Cc}(\mu)$ depends both on the geometry of $\gamma$, and $\mu$.
One might invert the integration order in \eqref{B} to write
\begin{equation}
\label{eq:B terms enrgy}
B_{\Cc}(\mu) = \int\limits_{\Sc^{1}}  E(\gamma;\theta)^{2}d\mu,
\end{equation}
where
\begin{equation*}
E(\gamma;\theta) = \int\limits_{\Cc}\left \langle \theta, \dot \gamma(t) \right \rangle^2dt,
\end{equation*}
is the ``$L^{2}$-energy of $\gamma$ along direction $\theta$", satisfying
\begin{equation}
\label{eq:enrgy constr}
E(\gamma;\theta)+E(\gamma;\theta^{\perp})=L
\end{equation}
for $\theta^{\perp}$ the rotation of $\theta$ by $\frac{\pi}{2}$.
Hence, under the constraint \eqref{eq:enrgy constr}, for every $\theta\in \Sc^{1}$ we have
\begin{equation}
\label{eq:1/2<=E^2+Eperp^2<=}
\frac{L^{2}}{2} \le E(\gamma;\theta)^{2}+E(\gamma;\theta^{\perp})^{2} \le L^{2}.
\end{equation}
Upon substituting \eqref{eq:1/2<=E^2+Eperp^2<=} into \eqref{eq:B terms enrgy}, and using the $\frac{\pi}{2}$-invariance of
$\mu$, we may deduce that
\begin{equation*}
\frac{L^{2}}{4} \le B_{\Cc}(\mu) \le \frac{L^{2}}{2},
\end{equation*}
so that the leading term in \eqref{igor_var} satisfies
\begin{equation}
\label{eq:lead const bnd}
0\le 4B_{\mathcal C}(\mu)-L^2 \le L^{2}.
\end{equation}
}

In order for the variance \eqref{igor_var} to observe an asymptotic law we need to split $S$ into sequences $\{n_{j}\}\subset S$ with corresponding $\Lambda_{n_{j}}$ admitting limit angular law. That is, $\mu_{n_{j}}\Rightarrow\mu$ for some $\mu$ probability measure on $\Sc^{1}$; in this case
$$B_{\Cc}(\Lambda_{n_{j}}) = B_{\Cc}(\mu_{n_{j}}) \rightarrow B_{\Cc}(\mu),$$ so that if $B_{\Cc}(\mu) > L^{2}/4$, in this case
\eqref{igor_var} is $$\Var(\mathcal Z_{n_j}) \sim (4B_{\mathcal C}(\mu)-L^2) \frac{n_j}{\mathcal N_{n_j}}.$$
Rudnick and Wigman \cite{R-W} observed that there exist ``special" curves (see Definition \ref{defstatic} in \S\ref{sec:main res}) for which
$$B_{\Cc}(\mu) \equiv \frac{L^{2}}{4}$$ \new{for all $\mu$ probability measures on $\Sc^{1}$ invariant w.r.t. rotation by $\frac{\pi}{2}$}
so that the leading term vanishes irrespective of the limit measure
\new{(though, given $\Cc$ the equality $$B_{\Cc}(\mu) = \frac{L^{2}}{4}$$ might still hold for some $\mu$ without holding identically, very restrictive in terms of $\mu$)}; for these \eqref{igor_var} gives no clue as of what is the true behaviour of $\Var(\Zc_{n})$ other than that it is of lower order of magnitude than the ``typical" $n/\Nc_{n}$.

\subsection{Statement of main results: limiting laws for $\Zc_{n}$}
\label{sec:main res}

Our two principal results below concern the limit laws for $\Zc_{n}$. Theorem \ref{mainth1} asserts the Central Limit Theorem for $\Zc_{n}$,
under the ``generic" scenario, \new{where} the leading terms of the variance in \eqref{igor_var} are bounded away from zero
(\new{cf. \S\ref{sec:discussion} for more details on how generic this situation is}). Indeed,
if we assume that the lattice points $\Lambda_{n}$ are equidistributed (i.e. the generic assumption
\eqref{eq:mujk=>dt/2pi equid} on the energy levels), then the assumptions of Theorem \ref{mainth1} hold for generic curves \cite[Corollary 7.2]{R-W}, also see the discussion in \S\ref{sec:discussion} below.
Theorem \ref{mainth2new}
investigates the peculiar alternative situation (the slightly more restrictive aforementioned ``static" curves)
with a non-Gaussian limit law, and among other things determines the true
(lower order) asymptotic law of the variance \eqref{igor_var} in the latter case.

\subsection*{Conventions} The notation $\mathop{\to}^{d}$ means convergence in distribution of random variables, $\mathop{=}^d $ denotes equality in law between
two random variables or random fields, and $\Nc(m,\sigma^2)$ is the Gaussian distribution with mean $m$ and variance $\sigma^2$. Throughout
this manuscript we will assume that $\Cc\subset \mathbb T$ is a given (fixed) curve of length $L$, and $\gamma:[0,L]\rightarrow\mathbb T$
an arc-length parametrization of $\Cc$.

\vspace{3mm}

First we formulate the Central Limit Theorem holding under ``generic" assumptions.

\begin{theorem}\label{mainth1}
Let $\mathcal C\subset \mathbb T$ be a smooth curve on the torus with nowhere zero
curvature of total length $L$, and $\lbrace n\rbrace \subset S$ such that $\mathcal N_n\to +\infty$, and $\lbrace 4B_{\mathcal C}(\mu_n) - L^2\rbrace$ is bounded away from zero. Then the limiting distribution of the nodal intersections number is Gaussian, i.e.
$$
\frac{\mathcal Z_n - \E[\mathcal Z_n]}{\sqrt{\Var(\mathcal Z_n)}}\mathop{\to}^{d} Z,
$$
where $Z\sim \mathcal N(0,1)$.
\end{theorem}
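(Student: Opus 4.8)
The plan is to reduce the statement to a central limit theorem for a single Wiener chaos component of $\Zc_n$. The starting point is the Kac--Rice type representation of $\Zc_n$ as the number of zeros on $[0,L]$ of the Gaussian process $t\mapsto T_n(\gamma(t))$,
\[
\Zc_n=\int_0^L \delta\bigl(T_n(\gamma(t))\bigr)\,\Bigl|\tfrac{d}{dt}T_n(\gamma(t))\Bigr|\,dt,
\]
made rigorous, as in \cite{R-W}, through an approximation of the Dirac mass. Since $T_n$ has unit variance everywhere, for each fixed $t$ the value $T_n(\gamma(t))$ is uncorrelated with, hence independent of, its tangential derivative $\partial_n(t):=\tfrac{d}{dt}T_n(\gamma(t))$; writing $\widetilde\partial_n(t):=\partial_n(t)/\sqrt{\Var(\partial_n(t))}$ for the standardised derivative and substituting the Hermite expansions of $\delta(\cdot)$ and $|\cdot|$ --- both even, so only even Hermite coefficients appear --- yields the Wiener--It\^o chaos decomposition $\Zc_n=\sum_{q\ge0}\Zc_n[2q]$. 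Here $\Zc_n[0]=\E[\Zc_n]$ is the deterministic mean \eqref{igor_mean}, the odd chaoses vanish (in particular $\Zc_n[1]=0$) by the symmetry $T_n\mathop{=}^{d}-T_n$, and the second chaos component is the explicit quadratic functional
\[
\Zc_n[2]=\int_0^L\bigl(\alpha_1(t)\,H_2(T_n(\gamma(t)))+\alpha_2(t)\,H_2(\widetilde\partial_n(t))\bigr)\,dt,\qquad H_2(x)=x^2-1,
\]
with $\alpha_1,\alpha_2$ deterministic; in the Gaussian coefficients $a_\lambda$ it is a finite Hermitian quadratic form of zero mean, i.e., an element $\Zc_n[2]=I_2(f_n)$ of the second Wiener chaos over the $\Nc_n$-dimensional Gaussian space they generate.

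The second step is to check that $\Zc_n[2]$ already carries the leading-order variance. A direct second-moment computation --- a sum over pairs $\lambda,\lambda'\in\Lambda_n$, in the spirit of \cite{R-W}, in which the off-diagonal pairs $\lambda\neq\pm\lambda'$ are discarded using the Bourgain--Rudnick estimate $\bigl|\int_0^L e^{2\pi i\langle w,\gamma(t)\rangle}\,dt\bigr|\ll|w|^{-1/2}$ \cite{BR12}, valid because $\Cc$ has nowhere vanishing curvature --- shows that the leading term of \eqref{igor_var} comes entirely from $\Zc_n[2]$, i.e., $\Var(\Zc_n[2])=(4B_{\Cc}(\mu_n)-L^2)\tfrac{n}{\Nc_n}+o\!\bigl(\tfrac{n}{\Nc_n}\bigr)$. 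Subtracting this from \eqref{igor_var} gives $\sum_{q\ge2}\Var(\Zc_n[2q])=\Var(\Zc_n)-\Var(\Zc_n[2])=o(n/\Nc_n)$; since the hypothesis forces $\{4B_{\Cc}(\mu_n)-L^2\}$ bounded away from zero, we have $\Var(\Zc_n)\asymp n/\Nc_n$, so $\Var(\Zc_n[2])/\Var(\Zc_n)\to1$ and $\Zc_n-\E[\Zc_n]=\Zc_n[2]+R_n$ with $\E[R_n^2]=o(\Var(\Zc_n))$. By Slutsky it then suffices to prove $\Zc_n[2]/\sqrt{\Var(\Zc_n[2])}\mathop{\to}^{d}\Nc(0,1)$.

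The final step is the Fourth Moment Theorem of Nualart and Peccati: as $\Zc_n[2]$ lies in the fixed second Wiener chaos, it converges (after normalisation) to $\Nc(0,1)$ if and only if its fourth cumulant tends to zero, equivalently $\|f_n\otimes_1 f_n\|=o(\|f_n\|^2)$. Expanding the contraction norm (or $\E[\Zc_n[2]^4]$) produces sums over quadruples $\lambda_1,\lambda_2,\lambda_3,\lambda_4\in\Lambda_n$ constrained by additive relations of the type $\pm\lambda_1\pm\lambda_2\pm\lambda_3\pm\lambda_4=0$, weighted by products of the oscillatory integrals $\int_0^L e^{2\pi i\langle\lambda,\gamma(t)\rangle}\,dt$ and their derivatives along $\gamma$; the ``diagonal'' quadruples (pairing $\{\lambda_i\}$ into two $\pm$-pairs) reproduce exactly the Gaussian value $3\,\Var(\Zc_n[2])^2$, and it remains to bound the contribution of all remaining quadruples by $o((n/\Nc_n)^2)$. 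I expect this estimate to be the main obstacle. It rests on two facts: first, $\Nc_n\to\infty$, which is precisely what spreads $\Zc_n[2]$ over $\asymp\Nc_n$ nearly uncorrelated directions (the diagonal entries of $f_n$ being all of comparable size $\asymp n$) so that no single one dominates --- together with $\Nc_n=O(n^{o(1)})$ from \eqref{eq:N=O(n^o(1))} and suitable upper bounds for the number of the above additive quadruple relations among points of $\Lambda_n$ (``spectral correlations''); and second, the $|w|^{-1/2}$ stationary-phase decay of the oscillatory integrals afforded by the nowhere vanishing curvature of $\Cc$, which suppresses exactly those resonant configurations that would otherwise be as large as the diagonal. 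Granting this bound, the Fourth Moment Theorem gives $\Zc_n[2]/\sqrt{\Var(\Zc_n[2])}\mathop{\to}^{d}Z\sim\Nc(0,1)$, and combined with the previous paragraph this proves the theorem.
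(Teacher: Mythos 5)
Your first two steps coincide with the paper's: the Kratz--Le\'on chaos expansion of $\Zc_n$, the identification of $\Zc_n[2]$ as an explicit quadratic form in the $a_\lambda$, and the verification (via the stationary-phase bound $\ll|\lambda-\lambda'|^{-1/2}$ and the pair-correlation estimate $\sum_{\lambda\ne\lambda'}|\lambda-\lambda'|^{-1}\ll_\epsilon\Nc_n^{\epsilon}$) that $\Var(\Zc_n[2])\sim(4B_{\Cc}(\mu_n)-L^2)\tfrac{n}{\Nc_n}\sim\Var(\Zc_n)$, so that by orthogonality of the chaoses it suffices to prove a CLT for $\Zc_n[2]$. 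This is exactly Proposition \ref{secondvar} and the reduction in the paper, which works with the explicit split $\Zc_n[2]=\Zc_n^a[2]+\Zc_n^b[2]$ of \eqref{eq2}--\eqref{eq3}, computes $\Var(\Zc_n^a[2])=\tfrac{n}{\Nc_n}(4B_{\Cc}(\mu_n)-L^2)$ exactly in \eqref{varcomp}, and shows $\Var(\Zc_n^b[2])=o(\Var(\Zc_n^a[2]))$ in \eqref{opiccolo}.

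Your final step, however, has a genuine gap. You invoke the Nualart--Peccati Fourth Moment Theorem for the \emph{full} $\Zc_n[2]$ and yourself flag the required contraction bound $\|f_n\otimes_1 f_n\|=o(\|f_n\|^2)$ as the main obstacle, ultimately writing ``granting this bound''. That bound is not a formality: expanding the fourth cumulant of $\Zc_n[2]$ produces off-diagonal quadruple sums over $\Lambda_n$ weighted by oscillatory integrals, of the same nature as \eqref{quantity4_estim}, and in the paper such sums are only controlled (Lemma \ref{remainder4}) under the $\delta$-separatedness hypothesis of Definition \ref{def:delta-sep} --- an assumption \emph{not} made in Theorem \ref{mainth1}. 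The inputs you list ($\Nc_n\to\infty$, $\Nc_n=O(n^{o(1)})$, the $|w|^{-1/2}$ decay, and $|S_4(n)|=3\Nc_n(\Nc_n-1)$) do not obviously yield the needed $o((n/\Nc_n)^2)$ estimate for all non-paired quadruples, so as written the argument is incomplete. The gap is easily closed, and this is what the paper does: since you have already shown that the off-diagonal part $\Zc_n^b[2]$ is negligible in $L^2$ relative to $\Zc_n^a[2]$, Slutsky reduces the CLT to the diagonal part alone, which is the linear statistic $\sum_{\lambda\in\Lambda_n^+}c_\lambda(|a_\lambda|^2-1)$ with independent summands and normalised weights uniformly $O(\Nc_n^{-1/2})$; there the classical Lindeberg criterion (Proposition \ref{prop1}) --- or, if you prefer, a now-trivial fourth-moment computation --- applies directly, with no quadruple correlation estimates and no separatedness assumption.
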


Next investigate the (non-generic) situation when the variance is of lower order. In order to formulate Theorem \ref{mainth2new} we will have to restrict $\Cc$ to be static (Definition \ref{defstatic}), and the sequence $\{n \}\subset S$ to be $\delta$-separated (Definition \ref{def:delta-sep}).

\begin{definition}[Static curves]\label{defstatic}
A smooth curve $\mathcal C \subset \mathbb T$ with nowhere zero curvature is called static if for every probability measure $\mu$ on $\mathcal S^1$
\begin{equation}\label{eqstatic}
4B_{\mathcal C}(\mu)-L^2=0.
\end{equation}
\end{definition}

For example, any semi-circle or circle are static \cite[\S7.2]{R-W}. In Appendix \ref{Sdmitri} we show that any smooth curve with
nowhere vanishing curvature and invariant under some nontrivial rotation of finite order, is static. We thank D. Panov for pointing this out.

\begin{definition}[$\delta$-separated sequences]
\label{def:delta-sep}
Let $\delta>0$. A sequence $\lbrace n \rbrace\subset S$ of energy levels is $\delta$-separated if
\begin{equation}\label{generic}
\min_{\lambda \ne \lambda'\in \Lambda_n} \|\lambda - \lambda'\| \gg n^{1/4 + \delta}.
\end{equation}
\end{definition}

Bourgain and Rudnick \cite[Lemma $5$]{B-R} showed that ``most" $n$ satisfy the $\delta$-separatedness property for every $0<\delta<\frac{1}{4}$.
In fact they have a strong quantitative estimate on the number of the exceptions; a precise estimate on these
was established more recently \cite{G-W}.

\vspace{3mm}

Now we introduce some more notation.

\begin{notation}
\begin{enumerate}
\item For $t\in [0,L]$ set
\begin{equation}\label{defF}
\begin{split}
f(t) &:=\dot \gamma_1(t)^2  -\frac{1}{L}\int_0^L  \dot \gamma_1(u)^2\,du = -\dot \gamma_2(t)^2  +\frac{1}{L}\int_0^L  \dot \gamma_2(u)^2\,du,\cr
g(t) &:=\dot \gamma_1(t)\dot \gamma_2(t)  -\frac{1}{L}\int_0^L  \dot \gamma_1(u)\dot \gamma_2(u)\,du.
\end{split}
\end{equation}

\item
For a probability measure $\mu$ on $\mathcal S^1$ define
\begin{equation}\label{Amu}
A_{\mathcal C}(\mu):=\int_{\mathcal S^1} \int_{\mathcal S^1}\left( \int_0^L  \langle \theta, \dot \gamma(t)  \rangle^2\cdot
\langle \theta', \dot \gamma(t)  \rangle^2\,dt\right)^2 \,d\mu(\theta)d\mu(\theta').
\end{equation}

\item
Also define the random variable
\begin{equation}\label{M}
\mathcal M(\mu) := \frac{1}{\sqrt{16 A_{\mathcal C}(\mu) - L^2}}\left(a_1(\mu) (Z_1^2-1)+ a_2(\mu) (Z_2^2-1) + a_3(\mu) Z_1 Z_2 \right),
\end{equation}
with $Z_1, Z_2$ i.i.d. standard Gaussian random variables and
\begin{equation}
\begin{split}
a_1(\mu) &:= 2(1+\widehat{\mu}(4) ) \int_0^L f(t)^2\,dt,\qquad
 a_2(\mu) := 2(1-\widehat{\mu}(4))\int_0^L f(t)^2\,dt,\cr
a_3(\mu) &:= 4 \sqrt{1-\widehat{\mu}(4)^2}\int_0^L f(t) g(t)\,dt,
\end{split}
\end{equation}
where
$$
\widehat \mu(4):=\int_{\Sc^1} z^{-4}\,d\mu(z)
$$ is the $4$th Fourier coefficient of $\mu$; \new{$\widehat \mu(4)$ is the first
nontrivial Fourier coefficient, as $\widehat \mu(0)=1$, and, by the $\frac{\pi}{2}$-invariance
of $\mu$, we have $\widehat \mu(k)=0$ unless
$k$ is divisible by $4$}.

\end{enumerate}

\end{notation}

\mnew{\new{Alternatively we can define the quantities in \paref{defF} in the following way. For} every $t\in [0,L]$
let $\phi(t)\in [0,2\pi]$ be the argument of $\dot{\gamma}(t)$, where
$\dot{\gamma}(t)$ is viewed as a (unit modulus) complex number, i.e. \begin{equation}\label{ang_vel}
\dot \gamma(t) =: \e^{i\phi(t)},
\end{equation}
then
\begin{equation*}
\begin{split}
f(t) &=\cos^2 \phi(t)  -\frac{1}{L}\int_0^L   \cos^2\phi(u)\,du = - \sin^2 \phi(t) +\frac{1}{L}\int_0^L  \sin^2 \phi(u)\,du,\cr
g(t) &=\cos \phi(t)\sin \phi(t)  -\frac{1}{L}\int_0^L \cos \phi(u)\sin \phi(u) \,du.
\end{split}
\end{equation*}
}

We are now in a position to formulate our second principal theorem.

\begin{theorem}\label{mainth2new}
Let $\mathcal C\subset \mathbb T$ be a static curve of length $L$, and
$\lbrace n\rbrace\subset S$ a $\delta$-separated sequence of energies such that $\mathcal N_n\to +\infty$.

\begin{enumerate}
\item The variance of $\Zc_{n}$ is asymptotic to
\begin{equation}\label{eqvar4}
\Var(\mathcal Z_n) =  \frac{n}{4\mathcal N^2_n} \left(16A_{\mathcal C}(\mu_n) -L^2\right )\cdot (1 +o(1)),
\end{equation}
with the leading term $16A_{\mathcal C}(\mu_n) -L^2$ bounded away from zero \new{under the sole assumption that
$\Cc$ is static}.

\item

There exists a coupling of the random variables
$\Zc_n$ and $\mathcal M(\mu_n)$ (defined in \paref{M}), such that
\begin{equation*}
\E\left[ \left |  \frac{\mathcal Z_n -\E[\mathcal Z_n]}{\sqrt{\Var(\mathcal Z_n)}} - \mathcal M(\mu_n)  \right |  \right]\to 0,
\end{equation*}
and
\begin{equation*}
 \frac{\mathcal Z_n -\E[\mathcal Z_n]}{\sqrt{\Var(\mathcal Z_n)}} - \mathcal M(\mu_n)  \to 0,\quad a.s.
\end{equation*}
\end{enumerate}
\end{theorem}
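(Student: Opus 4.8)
The plan is to combine the Kac--Rice integral--geometric representation with the Wiener--It\^o chaos expansion, to isolate the dominant chaotic component, and to recognise its limit as a quadratic functional of two Gaussians coming from the $\tfrac{\pi}{2}$-structured fluctuations of $\{|a_\lambda|^2\}_{\lambda\in\Lambda_n}$; this parallels the study of the nodal length of arithmetic random waves, with the curve $\mathcal C$ in the role of the torus. First I would write $\mathcal Z_n=\int_0^L\delta\bigl(T_n(\gamma(t))\bigr)\,\bigl|\tfrac{d}{dt}T_n(\gamma(t))\bigr|\,dt$, justified by mollifying the delta and using the a.s.\ smoothness and non-degeneracy of $T_n|_{\mathcal C}$ from \cite{R-W}. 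For fixed $t$ the variables $T_n(\gamma(t))$ and $\tfrac{d}{dt}T_n(\gamma(t))$ are independent (as $r_n$ is even with $r_n(0)=1$), and $\Var\bigl(\tfrac{d}{dt}T_n(\gamma(t))\bigr)=E_n/2$ is $t$-independent by the $\tfrac{\pi}{2}$-invariance of $\Lambda_n$; expanding $\delta$ and $|\cdot|$ in Hermite series (coefficients $\beta_j$ and $\alpha_j$ respectively) then yields $\mathcal Z_n=\sum_{q\ \mathrm{even}}\mathcal Z_n[q]$ with $\mathcal Z_n[0]=\E[\mathcal Z_n]$. The crucial point is that the second chaos degenerates for static $\mathcal C$: writing $\sigma_n:=\sqrt{E_n/2}$, $\theta_\lambda:=\lambda/\sqrt n$ and $\widehat T_n(t):=\sigma_n^{-1}\tfrac{d}{dt}T_n(\gamma(t))$, a direct computation gives $\mathcal Z_n[2]=\tfrac{\sqrt{2n}}{2}\int_0^L\bigl(\widehat T_n(t)^2-T_n(\gamma(t))^2\bigr)dt$, whose ``diagonal'' part is a multiple of $\sum_{\lambda}|a_\lambda|^2\bigl(2E(\gamma;\theta_\lambda)-L\bigr)$, which vanishes identically because staticity forces $E(\gamma;\theta)\equiv L/2$; the remaining off-diagonal part has variance $O(n^{3/4-\delta})=o(n/\mathcal N_n^2)$ by the $\delta$-separation and van der Corput / stationary-phase bounds on $\int_0^L(\cdots)\,e^{2\pi i\langle\lambda+\lambda',\gamma(t)\rangle}dt$, since $|\lambda+\lambda'|\gg n^{1/4+\delta}$ whenever $\lambda'\neq-\lambda$. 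A similar argument gives $\sum_{q\ge6}\Var(\mathcal Z_n[q])=o(n/\mathcal N_n^2)$, so the leading fluctuation of $\mathcal Z_n$ is carried by $\mathcal Z_n[4]$.

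Next I would analyse $\mathcal Z_n[4]=\sigma_n\int_0^L\bigl(\beta_4\alpha_0 H_4(T_n(\gamma(t)))+\beta_2\alpha_2 H_2(T_n(\gamma(t)))H_2(\widehat T_n(t))+\beta_0\alpha_4 H_4(\widehat T_n(t))\bigr)dt$, expanded in the $a_\lambda$. The genuinely off-diagonal contributions — additive quadruples of $\Lambda_n$ whose sum is bounded away from, or merely close to, $0$ — contribute only $o(n/\mathcal N_n^2)$ to the variance; this is where the $\delta$-separation is used most heavily, via counting of (near-)zero additive quadruples of $\Lambda_n$ in the spirit of \cite[Lemma~5]{B-R} combined with oscillatory-integral bounds uniform in the amplitudes. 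The surviving ``diagonal'' of $\mathcal Z_n[4]$ then reduces, after inserting the static-curve identity $\langle\theta,\dot\gamma(t)\rangle^2=\tfrac12+f(t)\cos2\psi+g(t)\sin2\psi$ for $\theta=e^{i\psi}$ and the numerical identity $3\beta_4\alpha_0+\beta_2\alpha_2+3\beta_0\alpha_4=0$ (which annihilates the term proportional to $\bigl(\tfrac1{\mathcal N_n}\sum_\lambda(|a_\lambda|^2-1)\bigr)^2$), to a quadratic form in the two $\tfrac{\pi}{2}$-structured sums $C_n:=\tfrac1{\mathcal N_n}\sum_\lambda(|a_\lambda|^2-1)\cos2\psi_\lambda$ and $S_n:=\tfrac1{\mathcal N_n}\sum_\lambda(|a_\lambda|^2-1)\sin2\psi_\lambda$ (equivalently the real and imaginary parts of $\tfrac1{\mathcal N_n}\sum_\lambda(|a_\lambda|^2-1)(\lambda/\sqrt n)^2$), which are normalised sums of i.i.d.\ contributions with $\Var(C_n)\sim(1+\widehat\mu_n(4))/\mathcal N_n$, $\Var(S_n)\sim(1-\widehat\mu_n(4))/\mathcal N_n$, and $\Cov(C_n,S_n)\to0$. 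Computing the variance of this quadratic form yields part~(1), i.e.\ \eqref{eqvar4} with leading constant $16A_{\mathcal C}(\mu_n)-L^2$; its strict positivity for static $\mathcal C$ follows from expressing $A_{\mathcal C}(\mu)$ through $\int_0^Lf^2$, $\int_0^Lfg$ and $\widehat\mu(4)$, using $|\widehat\mu(4)|\le1$.

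For part~(2) I would invoke a quantitative central limit theorem with strong approximation: since $(C_n,S_n)$ are normalised sums of i.i.d.\ terms with all moments finite, on a suitable enlargement of the probability space one couples $(\sqrt{\mathcal N_n}\,C_n,\sqrt{\mathcal N_n}\,S_n)$ with genuinely independent centred Gaussians of the correct variances up to an $L^1$-error $o(1)$ (e.g.\ by Skorokhod embedding, or a Berry--Esseen estimate combined with a Zaitsev/optimal-transport bound). Substituting those Gaussians into the quadratic form gives exactly $\mathcal M(\mu_n)$ after the normalisation by $\sqrt{16A_{\mathcal C}(\mu_n)-L^2}$, and collecting the errors from (i) the chaoses $q=2$ and $q\ge6$, (ii) the off-diagonal part of $\mathcal Z_n[4]$, and (iii) the Gaussian coupling yields $\bigl\|\tfrac{\mathcal Z_n-\E[\mathcal Z_n]}{\sqrt{\Var(\mathcal Z_n)}}-\mathcal M(\mu_n)\bigr\|_{L^1}\to0$; the almost-sure statement follows from the same estimates, the error terms being summable in $n$ along the $\delta$-separated sequence, via Borel--Cantelli on the coupled space.

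I expect the main obstacle to be the treatment of $\mathcal Z_n[4]$: proving that its off-diagonal part is negligible (which needs both the $\delta$-separation and oscillatory-integral bounds uniform in the amplitudes that appear), and carrying out the exact bookkeeping that collapses the diagonal to the precise quadratic form in $C_n,S_n$ — in particular verifying the cancellation of the $\bigl(\tfrac1{\mathcal N_n}\sum_\lambda(|a_\lambda|^2-1)\bigr)^2$-term via the Hermite-coefficient identity. Rigorously justifying the Kac--Rice representation with the distributional delta and uniformly controlling the tail $\sum_{q\ge6}\Var(\mathcal Z_n[q])$ are secondary technical points, by now standard in this circle of ideas.
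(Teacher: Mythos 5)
Your skeleton matches the paper's: chaos expansion of the Kac--Rice integral, degeneration of the diagonal part of $\mathcal Z_n[2]$ for static curves, dominance of $\mathcal Z_n[4]$, its collapse to a quadratic form in the two $\tfrac{\pi}{2}$-structured sums (your $C_n,S_n$ are, up to constants, the paper's $N_1-N_2$ and $N_3$), and the variance constant $16A_{\mathcal C}(\mu_n)-L^2$. However, there are two genuine gaps. The first is the step you call secondary: the claim $\sum_{q\ge 3}\Var(\mathcal Z_n[2q])=o(n/\mathcal N_n^{2})$ is \emph{not} obtained by ``a similar argument'' to the second-chaos off-diagonal bound, and no method you indicate delivers it. Each higher chaos has both diagonal contributions, governed by the spectral correlation sets $\{\lambda_1+\cdots+\lambda_{2q}=0\}$, and off-diagonal oscillatory sums, and you would need estimates uniform (indeed summable) in $q$; nothing standard provides this. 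The paper sidesteps the issue entirely: it computes the \emph{total} variance $\Var(\mathcal Z_n)$ by a refined approximate Kac--Rice formula (second-order Taylor expansion of the two-point correlation function $K_2$, control of singular squares, fourth and sixth moments of $r,r_1,r_2,r_{12}$, plus the arithmetic inputs $\min\|\lambda_1+\lambda_2+\lambda_3+\lambda_4\|\gg n^{2\delta}$ for $\delta$-separated $n$ and $|S_6(n)|=O(\mathcal N_n^{7/2})$), and then, since $\Var(\mathcal Z_n)$ and the explicitly computed $\Var(\mathcal Z_n[4])$ are both asymptotic to $\tfrac{n}{4\mathcal N_n^{2}}\bigl(16A_{\mathcal C}(\mu_n)-L^2\bigr)$, orthogonality of the chaoses forces the tail to be negligible. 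Without either this computation of $\Var(\mathcal Z_n)$ itself or a genuinely worked-out direct tail estimate, part (1) of the theorem (a statement about $\Var(\mathcal Z_n)$, not $\Var(\mathcal Z_n[4])$) is unproved, and so is the reduction of $\mathcal Z_n$ to $\mathcal Z_n[4]$ that part (2) rests on.

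The second gap is in the coupling. You propose an explicit strong approximation of $(\sqrt{\mathcal N_n}\,C_n,\sqrt{\mathcal N_n}\,S_n)$ by Gaussians and then Borel--Cantelli, asserting that the errors are ``summable in $n$''. They are not known to be: all the relevant error terms (off-diagonal contributions, second chaos, higher chaoses, the sixth-moment counting input) are only $o(1)$ without quantitative rates, since they rest on non-effective counting and equidistribution statements, so no summable rate is available. Note that the theorem only asserts \emph{existence} of a coupling; the paper obtains it abstractly, by showing $d\bigl(\tfrac{\mathcal Z_n-\E[\mathcal Z_n]}{\sqrt{\Var(\mathcal Z_n)}},\,\mathcal M(\mu_n)\bigr)\to 0$ for a metric metrizing convergence in law (using that $\mathcal M(\mu)$ has an absolutely continuous law), then invoking Dudley's almost-sure representation theorem (Theorem 11.7.1 of the reference [Du]) to produce the a.s.\ coupling, and upgrading to $L^1$ via the $L^2$-boundedness (uniform integrability) of the normalized sequence; your argument should be repaired along these lines unless you can produce genuine rates. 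A minor further point: your positivity argument for $16A_{\mathcal C}(\mu_n)-L^2$ via the explicit quadratic-form variance in terms of $\int_0^L f^2$, $\int_0^L fg$ and $\widehat\mu(4)$ is viable, but it additionally requires $\int_0^L f(t)^2\,dt>0$, which must be argued from the nowhere-vanishing curvature (otherwise $\phi$ is constant and $\mathcal C$ a segment) --- essentially the same endgame as the paper's own proof of positivity.
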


\new{The second part of Theorem \ref{mainth2new} establishes that the distribution law of
the normalised intersections number $$\frac{\mathcal Z_n -\E[\mathcal Z_n]}{\sqrt{\Var(\mathcal Z_n)}}$$
is asymptotic to that of  $\mathcal M(\mu_n)$, both in $L^{1}$-sense and a.s.
The quantity $A_{\mathcal C}(\mu_n)$ (and also $16A_{\mathcal C}(\mu_n) -L^2$) depend on both the geometry of $\Cc$
and $\mu_{n}$ in a nontrivial way.}
In the particular case of $\mathcal C$ a full circle, and $\mu_n \Rightarrow d\theta/2\pi$, Theorem \ref{mainth2new} yields the following
via a routine computation:

\begin{example}
Let $\mathcal C\subset \mathbb T$ be a full circle of total length $L$, let $\lbrace n\rbrace\subset S$ be a $\delta$-separated sequence such that $\mathcal N_n\to +\infty$ and $\mu_n \Rightarrow \frac{d\theta}{2\pi}$, then for the variance of the nodal intersections number we have
$$
\Var \left( \mathcal Z_n \right)\sim  \frac{L^2}{32} \cdot \frac{n}{\mathcal N^2_n},
$$
and the limiting distribution is
\begin{equation}
\label{eq:expl circ distr}
\frac{\mathcal Z_n - \E[\mathcal Z_n]}{\sqrt{\Var(\mathcal Z_n)}}\mathop{\to}^{d} 1-\frac{Z_1^2 + Z_2^2}{2},
\end{equation}
where $Z_1,Z_2$ are i.i.d. standard Gaussian random variables. \new{Note that the r.h.s. of \eqref{eq:expl circ distr}
is bounded above by $1$;
to our best knowledge the l.h.s. of \eqref{eq:expl circ distr} might be bigger than $1$, so, in light of the above, bounded above by $1$ with high probability as $n\rightarrow\infty$.}
\end{example}

\subsection{Discussion}
\label{sec:discussion}

Given a length-$L$ toral curve $\Cc$ and its arc-length parametrization $$\gamma:[0,L]\rightarrow\Tb$$ we may ~\cite[\S 7.3]{R-W} associate a complex number $I(\gamma)\in \C$ in the following way. \mnew{Let us keep in mind \paref{ang_vel}: for every $t\in [0,L]$ let $\phi(t)\in [0,2\pi]$ be the argument of $\dot{\gamma}(t)$, where
$\dot{\gamma}(t)$ is viewed as a (unit modulus) complex number, i.e.
$\dot{\gamma}(t)= e^{i\phi(t)}$}, we then set
\begin{equation*}
I(\gamma) := \int\limits_{0}^{L}e^{2i\phi(t)}dt.
\end{equation*}
\new{\cite[Corollary 7.2]{R-W} asserts that:

\begin{enumerate}

\item The curve $\Cc$ is static if and only if $I(\gamma)=0$.

\item If $I(\gamma)\ne 0$ and $\Re(I(\gamma)) =  0$ (resp. $\Im(I(\gamma)) =  0$), then for all $\mu$ measures on $\Sc^{1}$
invariant w.r.t. rotation by $\frac{\pi}{2}$ and complex conjugation,
$4B_{\mathcal C}(\mu)-L^2=0$ if and only if
$\mu=\tau_{0}$
is the Cilleruelo measure on the r.h.s. of \eqref{eq:munj conv Cil} (resp. $4B_{\mathcal C}(\mu)-L^2=0$ if and only if
$\mu$ is the $\frac{\pi}{4}$-tilt of $\tau_{0}$ on the r.h.s. of \eqref{eq:munj conv Cil}).

\end{enumerate}

}

\new{It follows from above that if} both the imaginary and real parts
\begin{equation}
\label{eq:Re,Im(I) ne 0}
\Im (I(\gamma)), \Re (I(\gamma)) \ne 0
\end{equation}
of $I(\gamma)$
do not vanish, then ~\cite[Corollary 7.2]{R-W} show that $4B_{\Cc}(\mu)-L^{2}$ is bounded away from zero for {\em all} probability measures $\mu$ on $\Sc^{1}$ invariant w.r.t. rotation by $\frac{\pi}{2}$ and complex conjugation. Hence in this case \new{\eqref{eq:Re,Im(I) ne 0},}
the assumptions of Theorem \ref{mainth1} are satisfied for the full sequence $n\in S$ of energy levels
\new{(still assuming $\mathcal N_n\to +\infty$)}. The condition \eqref{eq:Re,Im(I) ne 0} is a generic condition on $\Cc$\new{,} understood, for example, in the sense of {\em prevalence}, see\footnote{We wish to thank Michael Benedicks for pointing out ~\cite{O-Y} to us.}
e.g. ~\cite[\S 6]{O-Y} (\new{in particular} Example $3.6$) and references therein.
\new{Hence the scenario described by Theorem \ref{mainth1} is ``generic":
it holds for  both ``generic" curves for all energy levels, and also the non-generic curves with $I(\gamma) \in \R,i\R$ either real or purely imaginary, for generic energy levels. Note that the condition \ref{eq:Re,Im(I) ne 0} is {\em not} invariant under rotations of $\gamma$,
so neither is the distribution of $\Zc_{n}$, nor its variance.}

The two only cases not covered by theorems \ref{mainth1} and \ref{mainth2new} are then:
\begin{enumerate}

\item We have $\Re (I(\gamma)) =0 $, $\Im (I(\gamma))\ne 0$ and the lattice points corresponding to the subsequence $\{n\}\subseteq S$ converge to
$$\mu_{n}\Rightarrow \frac{1}{4}\left(\delta_{\pm 1}+\delta_{\pm i}\right)$$ the Cilleruelo measure.

\item We have $\Re (I(\gamma)) \ne 0 $, $\Im (I(\gamma))= 0$ and the lattice points corresponding to the subsequence $\{n\}\subseteq S$ converge to $$\mu_{n}\Rightarrow \frac{1}{4}\delta_{\pi/4} \star \left(\delta_{\pm 1}+\delta_{\pm i}\right)$$ the tilted Cilleruelo measure,
i.e. Cilleruelo measure rotated by $\frac{\pi}{4}$.
\end{enumerate}
In order to analyse either of \new{the scenarios (1) and (2)} one needs to understand which of the terms
$$\frac{n}{\Nc_{n}}\cdot (4B_{\Cc}-L^{2})$$ (leading term of the $2$nd chaotic projection, see \eqref{varcomp})
or $\frac{n}{\Nc_{n}^{2}}$ (the order of magnitude of the next term) is dominant by order
of magnitude, knowing that in \new{either case (1) or (2), leading coefficient}
$4B_{\Cc}-L^{2}$ vanishes asymptotically, at least under the $\delta$-separatedness
assumption. Equivalently, whether
$4B_{\Cc}-L^{2}$ vanishes more rapidly than $\frac{1}{\Nc_{n}}$; this would most certainly involve the rate of
convergence of $\mu_{n}$, and it is plausible that one can construct sequences $\{n\}$
observing both kinds of behaviour.

\new{Towards the end of the introduction we would like to discuss the possibility of extending the presented results
to higher dimensional tori (that is, the distribution of the nodal intersections number of higher dimensional arithmetic random waves
against a smooth curve). In this case the associated lattice points problems are significantly more delicate, and we
are not aware of precise asymptotic results concerning the variance, let alone the more subtle question of the limit
law, with only a few partial results published \cite{R-W-Y,Ma}, for non-vanishing curvature curves and straight segments
respectively.} \mnew{Finally \new{we use this opportunity \mnew{to} point out the recent work \cite{VanVu} on the
universality of various statistics related to the nodal intersections number
(\new{such} as its mean and \new{higher moments}) w.r.t. the law of random coefficients $a_\lambda$ in \paref{defrf}}.}

\subsection*{Acknowledgements}

We express our deep gratitude to Ze\'{e}v Rudnick for many stimulating and fruitful discussions, and, in particular, pointing out
\cite[Lemma $5$]{B-R}, simplifying the proof of Lemma \ref{lem:angle chords}, and also his comments on an earlier version of this
manuscript. We are indebted to Dmitri Panov for freely sharing his expertise in geometry, who, among other things, has constructed a family of static curves in Proposition \ref{prop:Dima invar 2pi/k} and kindly allowed us to include it in the paper.
It is a pleasure to thank Michael Benedicks, P\"{a}r Kurlberg and Domenico Marinucci for
their comments on an earlier version of this manuscript.
M.R. would like to heartily thank the Department of Mathematics at King's College London for its warm hospitality.
\new{Finally, we are grateful to the anonymous referees and the editor for helping us improve the readability of our manuscript}.

The research leading to these results has received funding from the
European Research Council under the European Union's Seventh
Framework Programme (FP7/2007-2013), ERC grant agreement
n$^{\text{o}}$ 335141 (I.W.), and the grant F1R-MTH-PUL-15STAR (STARS) at University of Luxembourg (M.R.). \mnew{The research of M.R. is currently supported by the Fondation Sciences Math\'ematiques de Paris and the ANR-17-CE40-0008 project \emph{Unirandom}.}

\section{Outline of the paper}

\subsection{On the proof of the main results}

The proofs of theorems \ref{mainth1} and \ref{mainth2new} are based on the \emph{chaotic expansion} for the nodal intersections number (see \S\ref{Schaos} \mnew{and Appendix \ref{appendix_chaos}}).
We first consider a unit speed parametrization of the curve $\gamma: [0,L] \to \mathcal C$, and set
\begin{equation}\label{process}
f_n : [0,L] \to \mathbb R;\qquad t\mapsto T_n(\gamma(t)).
\end{equation}
The map $f_n$ in \paref{process} defines a (non-stationary) centered Gaussian process on $[0,L]$ with covariance function
\begin{equation}\label{covf}
r_n(t_1, t_2):=\Cov(f_n(t_1), f_n(t_2)) = \frac{1}{\mathcal N_n} \sum_{\lambda\in \Lambda_n} \cos(2\pi \langle \lambda, \gamma(t_1) - \gamma(t_2)\rangle),\quad t_1,t_2\in [0,L],
\end{equation}
see \eqref{cov}.
The number $\mathcal Z_n$ of nodal intersections of $T_{n}$ against $\Cc$ equals to the number of zero crossings
of $f_n$ in the interval $[0,L]$, and we can formally write
\begin{equation}\label{formal}
\mathcal Z_n = \int_0^L \delta_0(f_n(t))|f'_n(t)|\,dt,
\end{equation}
where $\delta_0$ is the Dirac delta function.

The random variable $\mathcal Z_n$ in \paref{formal} admits the Wiener-\^Ito chaotic expansion (see \S\ref{Schaos} \mnew{and Appendix \ref{appendix_chaos}}) of the form
\begin{equation}\label{chaos_exp}
\mathcal Z_n = \sum_{q=0} \mathcal Z_n[2q],
\end{equation}
where the above series converges in the space $L^2({\mathbb P})$ of random variables with finite variance. In particular, the random variables $\mathcal Z_n[2q]$, $\mathcal Z_n[2q']$ are orthogonal (uncorrelated) for $q\ne q'$, and $\mathcal Z_n[0]=\E[\mathcal Z_n]$.

Evaluating the second chaotic projection $\mathcal Z_n[2]$ yields that, under the assumptions in Theorem \ref{mainth1}, the variance of the total number $\mathcal Z_n$ of nodal intersections is asymptotic to the variance of $\mathcal Z_n[2]$, both being asymptotic to \paref{igor_var}. The latter and the orthogonality of the Wiener chaoses imply that the distribution of $\mathcal Z_n[2]$ dominates the series on the r.h.s. of \paref{chaos_exp}, and a Central Limit Theorem result for $\mathcal Z_n[2]$ allows to infer the statement of Theorem \ref{mainth1}.

\vspace{3mm}

Assume now that the curve $\mathcal C$ is static (Definition \ref{defstatic}). The leading term in \paref{igor_var} vanishes,
and we are left only with an upper bound for the variance of $\mathcal Z_n$.
To obtain its precise asymptotics we need to inspect the proof of the \emph{approximate} Kac-Rice formula
\cite[Proposition 1.3]{R-W}, and obtain one \new{extra} term in the \new{Taylor} expansion \new{of the two-point correlation function}
(see \new{\eqref{kr-approx} in} \S\ref{SproofKac}). The main difficulty is how to control ``off-diagonal" terms coming from the fourth moment of
\new{the covariance function $r_{n}$ in \eqref{cov}}, and specifically the quantity
\begin{equation}\label{quantity4}
\frac{1}{\mathcal N_n^2}\sum_{\substack{\lambda_1, \lambda_2,\lambda_3,\lambda_4\in \Lambda_n \\ \lambda_{1}+\lambda_{2}+\lambda_{3}+\lambda_{4}\ne 0}} \frac{1}{\| \lambda_1 + \lambda_2 + \lambda_3 + \lambda_4 \|},
\end{equation}
\new{required to be evaluated in order to control the fourth moment of $r_{n}$}.
We will use some properties of the $\delta$-separated sequences of energy levels (Definition \ref{def:delta-sep}) to show that
\new{the expression in} \paref{quantity4} is $o(1)$ (see Lemma \ref{remainder4}), and then prove \paref{eqvar4}.

It turns out that the leading term in the chaotic expansion \paref{chaos_exp} is no longer the projection onto the second chaos, but the projection $\mathcal Z_n[4]$ onto the fourth chaos. A precise analysis of the latter allows to get its asymptotic (non-Gaussian) distribution in \paref{M}, thus concluding the proof of Theorem \ref{mainth2new} by a standard application of \cite[Theorem 11.7.1]{Du}.

\subsection{Chaos expansion}\label{Schaos}

In this section we compute the chaotic expansion \paref{chaos_exp} for the nodal intersections number $\mathcal Z_n$.
The reader can refer to \new{Appendix \ref{appendix_chaos}, where the discussion is focussed on our particular situation}, or \cite{N-P},
for a complete discussion on Wiener-\^Ito chaos expansions.
Recall the definition \paref{process} of the random process $f_n$ and the formal expression \paref{formal}.
Note that for every $t\in [0,L]$
$$
f'_n(t) = \langle \nabla T_n (\gamma(t)), \dot \gamma(t)\rangle,
$$
where $\nabla T_n$ denotes the gradient of $T_n$ and $\dot \gamma$ the first derivative of $\gamma$. We have \cite[Lemma 1.1]{R-W} that
\begin{equation}\label{alpha_var}
\Var(f'_n(t))= 2\pi^2n =:\alpha.
\end{equation}
We can then rewrite \paref{formal} as
\begin{equation}\label{formal2}
\mathcal Z_n = \sqrt{2\pi^2n} \int_0^L \delta_0 (f_n(t)) |\widetilde f_n'(t)|\,dt,
\end{equation}
where
$$
\widetilde f_n'(t) := \frac{f_n'(t)}{\sqrt{2\pi^2n}},\quad t\in [0,L].
$$

As $f_{n}$ is a unit variance process, \new{differentiating the equality $r(t,t)\equiv 1$ yields that} for every $t\in [0,L]$, $f_n(t)$ and $f'_n(t)$ are independent (see e.g. \cite[Lemma 2.2]{R-W}),
and so are $f_n(t)$ and $\widetilde f'_n(t)$. \mnew{Heuristically, in order to find the chaotic expansion for $\mathcal Z_n$ in \paref{formal2} we first compute the chaotic decompositions\footnote{Note that the chaotic decomposition of $\delta_0(Z)$ is just a \emph{formal} expansion} for $\delta_0(Z)$ and $|W|$, where $(Z,W)$ is a standard Gaussian random vector. Since both the Dirac mass at $0$ and the absolute value are even functions, in these expansions only terms corresponding to even order Wiener chaoses appear:
\begin{equation}\label{decompositions}
\delta_0(Z) = \sum_{q=0}^{+\infty} b_{2q} H_{2q}(Z),\qquad |W|= \sum_{\ell=0}^{+\infty} a_{2\ell} H_{2\ell}(W),
\end{equation}
where $\lbrace H_k, k=0,1,\dots \rbrace$ denotes the Hermite polynomials (see \cite[\S5.5]{Sz75} and Appendix \ref{appendix_chaos})
and the coefficients $b_{2k}, a_{2k}, k=0, 1, 2, \dots$ are given by (see e.g. \cite{K-L}):
\begin{equation}\label{b}
b_{2q} := \frac{1}{(2q)! \sqrt{2\pi}}H_{2q}(0),\quad q\ge 0,
\end{equation}
and
\begin{equation}\label{a}
a_{2\ell}:= \sqrt{\frac{2}{\pi}}\frac{(-1)^{\ell+1}}{2^\ell \ell! (2\ell-1)},\quad \ell\ge 0.
\end{equation}
In order to obtain the (formal) chaotic expansion of $\delta_0(Z) |W|$, the random variables $Z$ and $W$ being independent, it suffices to ``multiply" the two series in \paref{decompositions}: the $2q$-th chaotic component is given by the sum of all terms of the form $b_{2m}a_{2n}H_{2m}(Z)H_{2n}(W)$ for $m+n = q$ (see the so-called product formula \cite[Theorem 2.7.10]{N-P}). To obtain the chaotic expansion for $\mathcal Z_n$ in \paref{formal2}, \new{on heuristic level}, we replace $(Z,W)$ by $(f_n(t), \widetilde{f'_n}(t))$ and then integrate over $t\in [0,L]$ the (formal) series expansion for $\delta_0(f_n(t))|\widetilde{f_n'}(t)|$.
}

\mnew{To be more precise, from} \cite[Lemma 2]{K-L} we have the chaotic expansion \paref{chaos_exp} for \mnew{the nodal intersection number}  \paref{formal2}
\begin{equation}\label{chaosexp}
\mathcal Z_n = \sum_{q=0}^{+\infty} \mathcal Z_n[2q]= \sqrt{2\pi^2n} \sum_{q=0}^{+\infty} \sum_{\ell=0}^{q}
b_{2q-2\ell} a_{2\ell} \int_0^L H_{2q-2\ell}(f_n(t)) H_{2\ell}(\widetilde f_n'(t))\,dt;
\end{equation}
in particular from \paref{chaosexp} we have for $q\ge 0$
\begin{equation}\label{proj}
\mathcal Z_n[2q] = \sqrt{2\pi^2n}  \sum_{\ell=0}^{q}
b_{2q-2\ell} a_{2\ell} \int_0^L H_{2q-2\ell}(f_n(t)) H_{2\ell}(\widetilde f_n'(t))\,dt.
\end{equation}

\subsection{Plan of the paper}

In \S\ref{Sproof} we will state a few key propositions instrumental in proving theorems \ref{mainth1} and \ref{mainth2new}, in particular, concerning $\Zc_{n}[2]$ in \paref{proj} for ``generic" curves (\new{for} Theorem \ref{mainth1}),
and $\Zc_{n}[4]$, and also the \new{``approximate Kac-Rice formula"
\eqref{kr-approx}} for static curves (\new{for} Theorem \ref{mainth2new}).
In \S\ref{Sproofs2} we will then investigate the second chaotic component for ``generic" curves, whereas in \S \ref{Schaos4} the fourth one in the case of static curves. The proof of the approximate Kac-Rice formula for the variance of the number of nodal intersections against static curves will be given in \S\ref{SproofKac}.

In the Appendix we will collect some \mnew{background} and \mnew{several} technical results (concerning chaotic components, approximate Kac-Rice formula and bounds for certain summations over lattice points such as \paref{quantity4}) and, in particular, in \S\ref{Sdmitri} a family of static curves will be constructed.

\section{Proofs of the main results}\label{Sproof}

The zeroth term in the chaos expansion \paref{chaosexp} of $\Zc_{n}$ \mnew{is the orthogonal projection of $\Zc_{n}$ onto the zeroth Wiener chaos (i.e., onto $\R$), hence it coincides with the mean of the random variable itself, consistently with the following lemma (cf. \eqref{igor_mean}).}

\begin{lemma}\label{lemma0}
For every $n\in S$,
\begin{equation}\label{proj0}
\mathcal Z_n[0] = \frac{\sqrt{E_n}}{\pi \sqrt{2}}L.
\end{equation}
\end{lemma}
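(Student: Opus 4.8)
The statement to prove is Lemma \ref{lemma0}: $\mathcal Z_n[0] = \E[\mathcal Z_n] = \frac{\sqrt{E_n}}{\pi\sqrt 2}L$. The plan is to extract $\mathcal Z_n[0]$ as the $q=0$ term in the chaotic expansion \eqref{chaosexp}, identify it with the mean, and then evaluate it directly. Setting $q=0$ in \eqref{proj} forces $\ell=0$, so
\begin{equation*}
\mathcal Z_n[0] = \sqrt{2\pi^2 n}\, b_0 a_0 \int_0^L H_0(f_n(t)) H_0(\widetilde f_n'(t))\,dt = \sqrt{2\pi^2 n}\, b_0 a_0 L,
\end{equation*}
using $H_0\equiv 1$ and that the integrand is constant along $[0,L]$. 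Alternatively, since the zeroth Wiener chaos is the space of constants, $\mathcal Z_n[0]$ is the orthogonal projection of $\mathcal Z_n$ onto $\R$, which is exactly $\E[\mathcal Z_n]$; combined with \eqref{igor_mean} and $E_n=4\pi^2 n$ this already gives the claim, but I would prefer the self-contained computation to keep the section independent of \cite{R-W}.

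The remaining work is to evaluate the constants $b_0$ and $a_0$. From \eqref{b} with $q=0$ we get $b_0 = \frac{1}{0!\sqrt{2\pi}} H_0(0) = \frac{1}{\sqrt{2\pi}}$. From \eqref{a} with $\ell=0$ we get $a_0 = \sqrt{\frac{2}{\pi}}\cdot \frac{(-1)^{1}}{2^0\cdot 0!\cdot(-1)} = \sqrt{\frac{2}{\pi}}$. Substituting,
\begin{equation*}
\mathcal Z_n[0] = \sqrt{2\pi^2 n}\cdot \frac{1}{\sqrt{2\pi}}\cdot \sqrt{\frac{2}{\pi}}\cdot L = \sqrt{2\pi^2 n}\cdot \frac{\sqrt 2}{\pi\sqrt 2 \cdot \sqrt\pi}\cdot\frac{1}{?}L,
\end{equation*}
so I would simply multiply the scalars carefully: $\sqrt{2\pi^2 n}\cdot \frac{1}{\sqrt{2\pi}} = \sqrt{\pi n}$, and then $\sqrt{\pi n}\cdot\sqrt{2/\pi} = \sqrt{2n}$, giving $\mathcal Z_n[0] = \sqrt{2n}\,L = \frac{\sqrt{4\pi^2 n}}{\pi\sqrt 2}L = \frac{\sqrt{E_n}}{\pi\sqrt 2}L$, which is the desired identity. (I would of course present this arithmetic cleanly in one display rather than the messy intermediate line above.)

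There is essentially no obstacle here: the lemma is a bookkeeping check that the $q=0$ chaotic component reproduces the known mean \eqref{igor_mean}. The only point requiring a line of care is that the formal expansion \eqref{chaosexp}, valid term-by-term once one grants the chaotic decomposition of $\delta_0$, does produce a genuine (non-formal) zeroth term because $H_0=1$ and the $t$-integral of the constant $1$ is simply $L$; equivalently, one can bypass the formal manipulation entirely by invoking that projection onto the zeroth chaos equals the expectation and quoting \eqref{igor_mean}. I would mention both routes and take the computational one as the proof, so the chaos expansion's normalization is verified internally.
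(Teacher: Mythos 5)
Your proposal is correct and follows essentially the same route as the paper: set $q=0$ in \eqref{proj}, use $H_0\equiv 1$, read off $b_0=\tfrac{1}{\sqrt{2\pi}}$ and $a_0=\sqrt{\tfrac{2}{\pi}}$ from \eqref{b} and \eqref{a}, and simplify $\sqrt{2\pi^2 n}\,b_0 a_0 L=\sqrt{2n}\,L=\tfrac{\sqrt{E_n}}{\pi\sqrt{2}}L$. The alternative observation that the zeroth projection equals $\E[\mathcal Z_n]$, hence \eqref{igor_mean} applies, is also exactly the remark the paper makes just before the lemma, so there is nothing to add beyond cleaning up the one garbled intermediate display.
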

\begin{proof}
From \paref{b}, \paref{a} and \paref{proj} we have, for $q=0$,
$$
\mathcal Z_n[0] = \sqrt{2\pi^2n}\, b_0\, a_0\,L=  \sqrt{2\pi^2n}\, \frac{1}{\sqrt{2\pi}} \sqrt{\frac{2}{\pi}}\,L = \sqrt{2n} L = \frac{\sqrt{E_n}}{\pi\sqrt{2}} L.
$$
\end{proof}

\subsection{Proof of Theorem \ref{mainth1}}

To prove Theorem \ref{mainth1}, we first need to study the asymptotic variance of the second chaotic component $\Zc_{n}[2]$. \new{It turns
our that, under the scenario covered by Theorem \ref{mainth1}, the second chaotic component $\Zc_{n}[2]$ dominates the distribution of
$\Zc_{n}$ (a by-product of Proposition \ref{secondvar} below and the orthogonality of the Wiener chaos spaces), and the convergence in
distribution of $\Zc_{n}[2]$ to Gaussian (Proposition \ref{prop1}) will allow us to deduce the same for $\Zc_{n}$.}

\begin{proposition}\label{secondvar}
Let $\mathcal C\subset \mathbb T$ be a smooth curve on the torus with nowhere zero
curvature, of total length L, and $\lbrace n\rbrace \subset S$  such that  $\mathcal N_n\to +\infty$, and $\lbrace 4B_{\mathcal C}(\mu_n) - L^2\rbrace$ is bounded away from zero. Then
$$
\Var(\mathcal Z_n)\sim \Var(\mathcal Z_n[2]),
$$
i.e. the variance of $\Zc_{n}[2]$ is asymptotic to the variance \paref{igor_var} of the nodal intersections $\Zc_{n}$.
\end{proposition}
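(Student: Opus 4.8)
The plan is to evaluate $\Var(\mathcal Z_n[2])$ in essentially closed form, read off its leading term, and then squeeze it between the already–known variance asymptotics \eqref{igor_var} (from above, via orthogonality of the chaoses) and its own diagonal part (from below). First I would make the second chaotic component explicit: specialising \eqref{proj} to $q=1$, using $H_2(x)=x^2-1$ and the coefficient values $b_2a_0=-b_0a_2=-\tfrac1{2\pi}$ obtained from \eqref{b}--\eqref{a}, one finds
\[
\mathcal Z_n[2]=\frac{\sqrt n}{\sqrt 2}\int_0^L\bigl(\widetilde f_n'(t)^2-f_n(t)^2\bigr)\,dt ,
\]
whence $\Var(\mathcal Z_n[2])=\tfrac n2\int_0^L\!\int_0^L\Cov\bigl(\widetilde f_n'(t_1)^2-f_n(t_1)^2,\ \widetilde f_n'(t_2)^2-f_n(t_2)^2\bigr)\,dt_1\,dt_2$. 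Expanding the covariance with the Gaussian identity $\Cov(X^2,Y^2)=2\Cov(X,Y)^2$ leaves the three kernels $r_n(t_1,t_2)=\Cov(f_n(t_1),f_n(t_2))$, $\rho_n(t_1,t_2)=\Cov(\widetilde f_n'(t_1),\widetilde f_n'(t_2))$ and $\sigma_n(t_1,t_2)=\Cov(f_n(t_1),\widetilde f_n'(t_2))$, each of which is an explicit sum over $\Lambda_n$ (using $\Lambda_n=-\Lambda_n$, $|\lambda|^2=n$, \eqref{cov}--\eqref{covf} and \eqref{alpha_var}).

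The algebraic crux is that, writing each kernel as $\mathcal N_n^{-1}\sum_\lambda w_\lambda e^{2\pi i\langle\lambda,\gamma(t_1)-\gamma(t_2)\rangle}$ with the appropriate weight $w_\lambda$, the coefficient of $e^{2\pi i\langle\lambda+\mu,\gamma(t_1)-\gamma(t_2)\rangle}$ in $\rho_n^2+r_n^2-\sigma_n(t_1,t_2)^2-\sigma_n(t_2,t_1)^2$ factorises as $\bigl(1+2\langle\hat\lambda,\dot\gamma(t_1)\rangle\langle\hat\mu,\dot\gamma(t_1)\rangle\bigr)\bigl(1+2\langle\hat\lambda,\dot\gamma(t_2)\rangle\langle\hat\mu,\dot\gamma(t_2)\rangle\bigr)$, with $\hat\lambda:=\lambda/|\lambda|$; integrating in $t_1,t_2$ then gives
\[
\Var(\mathcal Z_n[2])=\frac n{\mathcal N_n^2}\sum_{\lambda,\mu\in\Lambda_n}\Bigl|\int_0^L\bigl(1+2\langle\hat\lambda,\dot\gamma(t)\rangle\langle\hat\mu,\dot\gamma(t)\rangle\bigr)e^{2\pi i\langle\lambda+\mu,\gamma(t)\rangle}\,dt\Bigr|^2 ,
\]
a sum of non-negative terms. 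Now I would separate the diagonal $\mu=-\lambda$: there $\lambda+\mu=0$ and the inner integral equals $L-2E(\gamma;\hat\lambda)$, with $E(\gamma;\theta)=\int_0^L\langle\theta,\dot\gamma(t)\rangle^2\,dt$, so the diagonal contributes $\tfrac n{\mathcal N_n^2}\sum_\lambda\bigl(L-2E(\gamma;\hat\lambda)\bigr)^2$. Using the energy constraint \eqref{eq:enrgy constr}, the $\tfrac\pi2$-invariance of $\Lambda_n$ (which yields $\sum_\lambda E(\gamma;\hat\lambda)=\mathcal N_nL/2$) and \eqref{eq:B terms enrgy} (which yields $\sum_\lambda E(\gamma;\hat\lambda)^2=\mathcal N_nB_{\mathcal C}(\mu_n)$), this equals exactly $\tfrac n{\mathcal N_n}\bigl(4B_{\mathcal C}(\mu_n)-L^2\bigr)$, i.e.\ the leading term of \eqref{igor_var}. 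Since every off-diagonal ($\mu\ne-\lambda$) term is $\ge 0$, we obtain the one-sided bound $\Var(\mathcal Z_n[2])\ge\bigl(4B_{\mathcal C}(\mu_n)-L^2\bigr)\tfrac n{\mathcal N_n}$.

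It then remains to close the sandwich. By orthogonality of the Wiener chaoses $\Var(\mathcal Z_n[2])\le\Var(\mathcal Z_n)$, and \eqref{igor_var} gives $\Var(\mathcal Z_n)=\bigl(4B_{\mathcal C}(\mu_n)-L^2\bigr)\tfrac n{\mathcal N_n}+O\bigl(n/\mathcal N_n^{3/2}\bigr)$ (recall $B_{\mathcal C}(\Lambda_n)=B_{\mathcal C}(\mu_n)$ by \eqref{B}). Since $\mathcal N_n\to\infty$ and $4B_{\mathcal C}(\mu_n)-L^2$ is bounded away from zero — hence, by \eqref{eq:lead const bnd}, of exact order $1$ — the common leading term is $\asymp n/\mathcal N_n$ and dominates the $O(n/\mathcal N_n^{3/2})$ error; therefore both $\Var(\mathcal Z_n)$ and $\Var(\mathcal Z_n[2])$ are asymptotic to $\bigl(4B_{\mathcal C}(\mu_n)-L^2\bigr)\tfrac n{\mathcal N_n}$, which is the assertion (and, as a by-product, $\sum_{q\ge2}\Var(\mathcal Z_n[2q])=o(n/\mathcal N_n)$).

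The only genuinely delicate step is the exact evaluation in the second paragraph — the bookkeeping with the Hermite coefficients and Wick's theorem, and spotting the factorisation $4pq\,p'q'+2pp'+2qq'+1=(2pp'+1)(2qq'+1)$ that makes the off-diagonal manifestly non-negative; once that is in place the argument uses no lattice-point input at all (in contrast with the static case), the bound \eqref{igor_var} doing the rest. A more hands-on alternative would bound the off-diagonal directly: since $\mathcal C$ has nowhere-zero curvature, van der Corput gives $\bigl|\int_0^L(1+2\langle\hat\lambda,\dot\gamma\rangle\langle\hat\mu,\dot\gamma\rangle)e^{2\pi i\langle\lambda+\mu,\gamma\rangle}\,dt\bigr|\ll|\lambda+\mu|^{-1/2}$, reducing matters to a summation over lattice points as in \cite{R-W}; this is available but heavier, and the squeeze above is cleaner.
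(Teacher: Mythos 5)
Your argument is correct, and its core is genuinely different from the paper's. You compute $\Var(\mathcal Z_n[2])$ in closed form: the identity $\mathcal Z_n[2]=\tfrac{\sqrt n}{\sqrt 2}\int_0^L(\widetilde f_n'(t)^2-f_n(t)^2)\,dt$ is right (the constants $b_2a_0=-b_0a_2=-\tfrac1{2\pi}$ check out), the Wick expansion and the factorisation $1+2pp'+2qq'+4pqp'q'=(1+2pp')(1+2qq')$ are correct, and the resulting expression $\frac n{\mathcal N_n^2}\sum_{\lambda,\mu}\bigl|\int_0^L(1+2\langle\hat\lambda,\dot\gamma\rangle\langle\hat\mu,\dot\gamma\rangle)e^{2\pi i\langle\lambda+\mu,\gamma(t)\rangle}dt\bigr|^2$ is manifestly a sum of non-negative terms whose $\lambda+\mu=0$ part evaluates, via Lemma \ref{lem:ident sum squares prod} and \eqref{eq:B terms enrgy}, to exactly $\frac n{\mathcal N_n}(4B_{\mathcal C}(\mu_n)-L^2)$ — the same quantity the paper obtains as $\Var(\mathcal Z_n^a[2])$ in \eqref{varcomp}. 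The difference is in how the off-diagonal contribution is disposed of: the paper bounds it from above directly, using van der Corput (hence the curvature hypothesis) together with the lattice-point estimate $\sum_{\lambda\ne\lambda'}|\lambda-\lambda'|^{-1}\ll_\epsilon\mathcal N_n^{\epsilon}$, to get \eqref{opiccolo}, and only then invokes \eqref{igor_var}; you instead exploit positivity of the off-diagonal terms to get a lower bound and squeeze $\Var(\mathcal Z_n[2])$ between it and $\Var(\mathcal Z_n)$ (chaos orthogonality plus \eqref{igor_var}, with $4B_{\mathcal C}(\mu_n)-L^2$ bounded away from zero and above by \eqref{eq:lead const bnd} so that the $O(n/\mathcal N_n^{3/2})$ error is negligible). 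Your route is cleaner and needs no oscillatory-integral or lattice-point input beyond \eqref{igor_var} itself (where the curvature assumption is still implicitly used, since that formula is proved in \cite{R-W} for curves with nowhere-vanishing curvature); moreover, since your off-diagonal part is exactly the complement of the diagonal in an orthogonal decomposition, the squeeze also recovers the analogue of \eqref{opiccolo}, so the conclusion needed later for Proposition \ref{prop1} is not lost. What the paper's heavier estimate buys is an asymptotic for $\Var(\mathcal Z_n[2])$ that is independent of the black-box variance asymptotics \eqref{igor_var}, and an off-diagonal bound \eqref{good_estimate2} that is reused verbatim in the static case (Lemma \ref{var2static}), where your squeeze is unavailable because the diagonal term vanishes there.
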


In light of Proposition \ref{secondvar} to be proven in \S\ref{Sproofs2}, we are to study the asymptotic distribution of the second chaotic component.

\begin{proposition}\label{prop1}
Let $\mathcal C\subset \mathbb T$ be a smooth curve on the torus with nowhere zero
curvature, of total length L, and $\lbrace n\rbrace \subset S$ such that $\mathcal N_n\to +\infty$, and $\lbrace 4B_{\mathcal C}(\mu_n) - L^2\rbrace$ is bounded away from zero. Then
$$\frac{\mathcal Z_n[2]}{\sqrt{\Var(\mathcal Z_n[2])}}\mathop{\to}^{d} Z,
$$
where $Z\sim \mathcal N(0,1)$.
\end{proposition}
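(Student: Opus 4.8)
The plan is to prove Proposition \ref{prop1} through the Fourth Moment Theorem of Nualart--Peccati, in the quantitative form of Nourdin--Peccati (see \cite{N-P}). By \eqref{proj} with $q=1$,
$$
\mathcal Z_n[2] = \sqrt{2\pi^2 n}\left( b_2 a_0\int_0^L H_2(f_n(t))\,dt + b_0 a_2\int_0^L H_2(\widetilde f_n'(t))\,dt\right),
$$
which, $f_n$ and $\widetilde f_n'$ being jointly Gaussian, is an element of the second Wiener chaos of the Gaussian space generated by $\{a_\lambda\}_{\lambda\in\Lambda_n}$; write it as a double Wiener--It\^o integral $\mathcal Z_n[2]=I_2(h_n)$. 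For such a chaos element it suffices to show that the contraction norm $\|h_n\otimes_1 h_n\|$ is $o(\Var(\mathcal Z_n[2]))$, equivalently that the fourth cumulant obeys $\kappa_4(\mathcal Z_n[2])=o\!\left(\Var(\mathcal Z_n[2])^2\right)$; this yields $\mathcal Z_n[2]/\sqrt{\Var(\mathcal Z_n[2])}\mathop{\to}^{d} Z\sim\mathcal N(0,1)$. Since by Proposition \ref{secondvar} and \eqref{igor_var} we have $\Var(\mathcal Z_n[2])=(4B_{\mathcal C}(\mu_n)-L^2)\,\tfrac{n}{\mathcal N_n}(1+o(1))$ with the leading coefficient bounded away from zero by hypothesis, the goal reduces to proving $\kappa_4(\mathcal Z_n[2])=o\!\left((n/\mathcal N_n)^2\right)$.

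First I would record the covariance structure of the pair $(f_n,\widetilde f_n')$: besides $\Cov(f_n(t_1),f_n(t_2))=r_n(t_1,t_2)$ of \eqref{covf}, one has $\Cov(\widetilde f_n'(t_1),\widetilde f_n'(t_2))$ and $\Cov(f_n(t_1),\widetilde f_n'(t_2))$ obtained by differentiating $r_n$, and each of these three kernels equals $\frac{1}{\mathcal N_n}\sum_{\lambda\in\Lambda_n}$ of a bounded trigonometric factor in $\langle\lambda,\gamma(t_i)\rangle$ and $\langle\lambda/\sqrt n,\dot\gamma(t_i)\rangle$. By the diagram (Wick) formula, $\kappa_4(\mathcal Z_n[2])$ is a fixed finite linear combination of ``cyclic'' integrals of the shape
$$
\frac{c\, n^2}{\mathcal N_n^4}\,\sum_{\lambda_1,\lambda_2,\lambda_3,\lambda_4\in\Lambda_n}\;\sum_{\pm}\;w(\lambda_1,\dots,\lambda_4)\int_{[0,L]^4} e^{2\pi i\sum_{m=1}^4\langle \pm\lambda_{a(m)}\pm\lambda_{b(m)},\,\gamma(t_m)\rangle}\,dt_1\cdots dt_4,
$$
where the four covariance kernels are distributed over the edges of the $4$-cycle through $t_1,t_2,t_3,t_4$ and $w$ is a bounded weight built from the factors $\langle\lambda_i/\sqrt n,\dot\gamma(\cdot)\rangle$; the disconnected diagrams reproduce $3\Var(\mathcal Z_n[2])^2$ and are subtracted off, so only the genuinely connected ($4$-cycle) configurations remain.

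I would then split the quadruples according to whether, around the $4$-cycle, all consecutive edge-vectors coincide up to sign (forcing $\lambda_1=\pm\lambda_2=\pm\lambda_3=\pm\lambda_4$ with a single free vector): for this family each inner $t$-integral is $O(L)$ and summation over the free vector contributes $O(\mathcal N_n)$, giving a total $O(n^2/\mathcal N_n^{3})=o\!\left((n/\mathcal N_n)^2\right)$ since $\mathcal N_n\to\infty$. For the remaining quadruples at least one vertex carries a nonzero frequency $v=\pm\lambda_{a(m)}\pm\lambda_{b(m)}\neq 0$; here the nowhere-vanishing curvature of $\mathcal C$ gives, via van der Corput's lemma, the oscillatory decay $\int_0^L e^{2\pi i\langle v,\gamma(t)\rangle}\,dt=O(\|v\|^{-1/2})$, and, balancing these gains against the lattice-point count (using $\mathcal N_n=O(n^{o(1)})$ from \eqref{eq:N=O(n^o(1))} and bounds on sums of the type \eqref{quantity4}), one concludes that their total contribution is $o\!\left((n/\mathcal N_n)^2\right)$ as well. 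Note that, unlike in Theorem \ref{mainth2new}, no $\delta$-separatedness is needed here: the cancellation comes entirely from the curvature.

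The main obstacle is precisely this last estimate: one must organise the combinatorics of the non-degenerate quadruples and extract enough decay from the curvature-driven oscillatory integrals, uniformly over all sub-cases (which of the three covariance kernels sits on each edge of the $4$-cycle, which sign pattern occurs, and which of the $\lambda_i$ coincide), so that the resulting weighted sums over $\Lambda_n^4$ are genuinely of order smaller than $(n/\mathcal N_n)^2$. Once $\kappa_4(\mathcal Z_n[2])=o\!\left(\Var(\mathcal Z_n[2])^2\right)$ is established, the Nualart--Peccati theorem yields $\mathcal Z_n[2]/\sqrt{\Var(\mathcal Z_n[2])}\mathop{\to}^{d} Z\sim\mathcal N(0,1)$, which is exactly the content of Proposition \ref{prop1}.
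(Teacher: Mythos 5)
Your route (fourth moment theorem for a fixed Wiener chaos) is legitimately different from the paper's, and the target estimate $\kappa_4(\mathcal Z_n[2])=o\bigl(\Var(\mathcal Z_n[2])^2\bigr)$ is indeed true, but as written the proposal has a genuine gap at exactly the point you flag as ``the main obstacle'': the contribution of the non-degenerate $4$-cycles is never actually bounded, and the tools you point to are not the right ones. The sums that arise are cyclic products $\prod_{m}|\lambda_m-\lambda_{m+1}|^{-1/2}$ over $\Lambda_n^4$ (consecutive \emph{differences} around the cycle), not sums of $\|\lambda_1+\lambda_2+\lambda_3+\lambda_4\|^{-1}$ as in \eqref{quantity4}; and the bound of Lemma \ref{remainder4} for \eqref{quantity4} is proved only under $\delta$-separatedness, which you simultaneously (and correctly) claim not to need -- so citing ``bounds of the type \eqref{quantity4}'' is inconsistent with your own setup. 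The relevant input is instead the pairwise bound $\sum_{\lambda\ne\lambda'}|\lambda-\lambda'|^{-1}\ll_\epsilon\mathcal N_n^{\epsilon}$ from \cite[Proposition 5.3]{R-W} combined with van der Corput, and even with it the ``balancing'' is not routine: if in the fully non-degenerate case you bound the four factors separately (e.g. keep one factor and estimate the others trivially, or symmetrize by $abcd\le\frac14(a^4+\dots+d^4)$), two of the four lattice points remain unconstrained and you only get $O(\mathcal N_n^{2+\epsilon})$, whereas you need $o(\mathcal N_n^{2})$ after extracting the prefactor $n^2/\mathcal N_n^4$. Closing this requires an extra idea (for instance viewing the sum as $\operatorname{tr}(A^4)$ for the matrix $A_{\lambda\lambda'}=|\lambda-\lambda'|^{-1/2}$, $\lambda\ne\lambda'$, and using $\operatorname{tr}(A^4)\le\|A\|_{op}^2\|A\|_{HS}^2$ together with row-sum bounds, plus a separate case analysis for partially degenerate quadruples), none of which appears in the proposal.

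It is also worth noting that the paper avoids this combinatorial work altogether. By Lemma \ref{lemproj2}, $\mathcal Z_n[2]=\mathcal Z_n^a[2]+\mathcal Z_n^b[2]$, and the curvature/van der Corput estimate you invoke is used only once, in \eqref{good_estimate2}--\eqref{opiccolo}, to show $\Var(\mathcal Z_n^b[2])=o(\Var(\mathcal Z_n^a[2]))$; the distributional statement then reduces to the diagonal part $\mathcal Z_n^a[2]$, which by \eqref{eq2} is a weighted sum of the \emph{independent} variables $|a_\lambda|^2-1$, $\lambda\in\Lambda_n^+$, with weights uniformly $O(1)$. Since $4B_{\mathcal C}(\mu_n)-L^2$ is bounded away from zero, the normalized weights in \eqref{asymp-proj2} are $O(\mathcal N_n^{-1/2})$ and Lindeberg's criterion gives the CLT immediately. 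If you prefer to keep the fourth moment framework, the efficient version of your plan is to apply it to $\mathcal Z_n^a[2]$ (where the fourth cumulant is a single diagonal sum of order $n^2/\mathcal N_n^3$) rather than to the full projection $\mathcal Z_n[2]$, and to dispose of $\mathcal Z_n^b[2]$ by the variance bound alone.
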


Proposition \ref{prop1} will be proven in \S\ref{Sproofs2}. We are in a position to prove our first main result.

\begin{proof}[Proof of Theorem \ref{mainth1} assuming propositions \ref{secondvar} and \ref{prop1}]
By \paref{chaosexp} we write
$$
\frac{\mathcal Z_n - \E[\mathcal Z_n]}{\sqrt{\Var(\mathcal Z_n)}}= \sum_{q=1}^{+\infty} \frac{\mathcal Z_n[2q]}{\sqrt{\Var(\mathcal Z_n)}}.
$$
Thanks to Proposition \ref{secondvar} and the orthogonality of Wiener chaoses we have, as $\mathcal N_n\to \infty$,
$$
\frac{\mathcal Z_n - \E[\mathcal Z_n]}{\sqrt{\Var(\mathcal Z_n)}} =  \frac{\mathcal Z_n[2]}{\sqrt{\Var(\mathcal Z_n[2])}}  + o_{\mathbb P}(1),
$$
where $o_{\mathbb P}(1)$ denotes a sequence of random variables converging to zero in probability. In particular, the distribution
of the normalized total number of nodal intersections is asymptotic to the distribution of $\frac{\mathcal Z_n[2]}{\sqrt{\Var(\mathcal Z_n[2])}}$.
The latter and Proposition \ref{prop1} imply the statement of Theorem \ref{mainth1}.

\end{proof}

\subsection{Proof of Theorem \ref{mainth2new}}

\new{It turns
our that, under the scenario covered by Theorem \ref{mainth2new}, the $4$th chaotic component $\Zc_{n}[4]$ dominates the distribution of
$\Zc_{n}$ (a by-product of propositions \ref{kac-rice} and \ref{prop-var4} below, and the orthogonality of the Wiener chaos spaces).
The distribution law of $\Zc_{n}$ is then asymptotic to the one of $\Zc_{n}[4]$ in Proposition \ref{4chaos}.
Though in this scenario, the $2$nd chaotic component $\Zc_{n}[2]$ does not vanish precisely, it does not
contribute to the asymptotic law of $\Zc_{n}$ (also follows from Proposition \ref{kac-rice}). Unlike propositions
\ref{secondvar}-\ref{prop1}, to prove the more delicate propositions
\ref{kac-rice}-\ref{prop-var4} we will invoke both the assumptions on $\Cc$ and $\{n\}$, i.e. $\Cc$ is static and $\{n\}$ is $\delta$-separated.}
Our first proposition asserts the variance part \eqref{eqvar4} of Theorem \ref{mainth2new}, to be proven in \S \ref{SproofKac}.

\begin{proposition}\label{kac-rice}
Let $\mathcal C\subset \mathbb T$ be a static curve of length $L$, and $\lbrace n\rbrace\subset S$ be a $\delta$-separated sequence such that $\mathcal N_n\to +\infty$. Then
\begin{equation}\label{eqvar42}
\Var(\mathcal Z_n) =  \frac{n}{4\mathcal N^2_n} \left(16A_{\mathcal C}(\mu_n) -L^2\right ) (1 +o(1)),
\end{equation}
where $A_{\mathcal C}(\mu_n)$ is given in \paref{Amu} with $\mu=\mu_n$.
Moreover, the leading term $16A_{\mathcal C}(\mu_n) -L^2$ in \paref{eqvar42} is bounded away from zero.
\end{proposition}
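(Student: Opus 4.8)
The plan is to refine the ``approximate Kac--Rice formula'' of \cite[Proposition 1.3]{R-W} by retaining one more term of its Taylor expansion. Starting from
\[
\Var(\Zc_n)=\int_0^L\int_0^L\big(K_2(t_1,t_2)-K_1(t_1)K_1(t_2)\big)\,dt_1\,dt_2+\E[\Zc_n],
\]
where $K_1\equiv\sqrt{2n}$ is the one-point zero intensity of the process $f_n$ of \eqref{process} and $K_2$ its two-point intensity (so the last summand is $O(n^{1/2})=o(n/\Nc_n^2)$), one writes $K_2$ by Gaussian regression as $\tfrac1{2\pi\sqrt{1-r_n^2}}\,\E[\,|N_1|\,|N_2|\,]$ with $(N_1,N_2)$ centred Gaussian whose covariance matrix is an explicit function of $r_n(t_1,t_2)$, of $\partial_1 r_n$, $\partial_2 r_n$ and $\partial_1\partial_2 r_n$, normalised by $\alpha=2\pi^2n$ (see \eqref{alpha_var}). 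Where these normalised quantities are small one Taylor-expands $K_2-K_1^2$; Rudnick--Wigman truncated at the order whose integral is $\tfrac{n}{\Nc_n}(4B_\Cc(\mu_n)-L^2)$ (plus an error), which is identically zero for static $\Cc$ by Definition \ref{defstatic}. The point is to retain in addition the next order, of size $\tfrac{n}{\Nc_n^2}$, and to bound everything else by $o(n/\Nc_n^2)$.

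Integrating the quadratic terms: expanding $r_n^2$, $(\partial_i r_n)^2/\alpha$, $(\partial_1\partial_2 r_n)^2/\alpha^2$ and the cross terms into double sums over $\Lambda_n$ and integrating over $[0,L]^2$, the diagonal terms ($\lambda'=\pm\lambda$) reproduce --- once the non-stationary phase bound $\int_0^L\e^{4\pi i\langle\lambda,\gamma(t)\rangle}dt\ll n^{-1/4}$ (which uses the non-vanishing curvature of $\Cc$) discards the purely oscillatory pieces --- the combination $\tfrac{n}{\Nc_n}(4B_\Cc(\mu_n)-L^2)=0$, while the off-diagonal terms ($\lambda'\ne\pm\lambda$) are $\ll n\,\Nc_n^{-2}\sum_{\lambda\ne\pm\lambda'}\|\lambda\pm\lambda'\|^{-1}\ll n^{3/4-\delta}=o(n/\Nc_n^2)$ using $\delta$-separatedness and $\Nc_n=n^{o(1)}$. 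Integrating the quartic terms: these are products of four of the factors $r_n,\,\partial_i r_n/\sqrt\alpha,\,\partial_1\partial_2 r_n/\alpha$, so one gets quadruple sums over $\Lambda_n$; their diagonal part (the three ways of pairing $\lambda_1,\dots,\lambda_4$) yields combinations of $\tfrac1{\Nc_n^2}\sum_{\lambda,\lambda'}\big(\int_0^L\langle\theta_\lambda,\dot\gamma(t)\rangle^2\langle\theta_{\lambda'},\dot\gamma(t)\rangle^2dt\big)^2=A_\Cc(\mu_n)$ and lower-degree analogues, which a computation with the Hermite coefficients \eqref{b}--\eqref{a}, simplified using the static relation $I(\gamma)=0$, i.e.\ $4B_\Cc(\mu_n)=L^2$ (cf.\ \cite[Corollary 7.2]{R-W}), collapses to $\tfrac{n}{4\Nc_n^2}(16A_\Cc(\mu_n)-L^2)$. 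The off-diagonal part ($\lambda_1+\lambda_2+\lambda_3+\lambda_4\ne0$) is, after the same phase bound, $\ll\tfrac{n}{\Nc_n^2}$ times the quantity \eqref{quantity4}, which is $o(1)$ by Lemma \ref{remainder4} (here $\delta$-separatedness is used again), hence $o(n/\Nc_n^2)$. Finally the Taylor remainder from sixth- and higher-order terms and from degenerate lattice configurations is $O(n/\Nc_n^3)+o(n/\Nc_n^2)$.

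The delicate point --- and the main obstacle --- is the near-diagonal region, where $r_n$ is not small and the expansion is invalid; there one must argue as in the proof of \cite[Proposition 1.3]{R-W}, but with sharper book-keeping, that since $\langle\lambda,\gamma(t_1)-\gamma(t_2)\rangle=\langle\lambda,\dot\gamma(t_1)\rangle(t_1-t_2)+O(\|\lambda\|(t_1-t_2)^2)$ and $\sum_{\lambda\in\Lambda_n}\langle\lambda,\dot\gamma(t_1)\rangle^2\sim\tfrac{n\Nc_n}{2}$, the process near the diagonal is a controlled perturbation of a rescaled stationary Gaussian process, and its contribution to the variance is $o(n/\Nc_n^2)$. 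In short, the whole difficulty is that \emph{every} off-diagonal and remainder term must be pushed below $n/\Nc_n^2$ --- a genuine sharpening of the $O(n/\Nc_n^{3/2})$ error available in \cite{R-W} without the separation hypothesis --- and this is exactly where $\delta$-separatedness is indispensable.

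It remains to prove that $16A_\Cc(\mu_n)-L^2$ is bounded away from zero. Since $\Cc$ is static, $I(\gamma)=0$ by \cite[Corollary 7.2]{R-W}; since $\Lambda_n$ is invariant under rotation by $\tfrac\pi2$, so is $\mu_n$, hence $\widehat{\mu_n}(k)=0$ unless $4\mid k$, and in particular $\int_{\Sc^1}\langle\theta,\dot\gamma(t)\rangle^2\,d\mu_n(\theta)\equiv\tfrac12$. Writing $F(\theta,\theta'):=\int_0^L\langle\theta,\dot\gamma(t)\rangle^2\langle\theta',\dot\gamma(t)\rangle^2\,dt$ we get $\int_{\Sc^1}\int_{\Sc^1}F\,d\mu_n\,d\mu_n=\tfrac L4$, so that $16A_\Cc(\mu_n)-L^2=16\,\Var_{\mu_n\times\mu_n}(F)\ge0$, with equality only if the continuous function $F$ is $\mu_n\times\mu_n$-a.e.\ constant, necessarily equal to $\tfrac L4$; this would force $F(\theta,\theta)=\tfrac L4$ for some $\theta=\e^{i\psi}$ in the support of $\mu_n$. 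But $F(\theta,\theta)=\tfrac{3L}{8}+\tfrac18\int_0^L\cos\big(4(\psi-\phi(t))\big)\,dt$ (using $\dot\gamma(t)=\e^{i\phi(t)}$ and $I(\gamma)=0$), so this requires $\int_0^L\e^{4i\phi(t)}\,dt=-L\e^{-4i\psi}$, i.e.\ $\phi$ constant modulo $\tfrac\pi2$, contradicting the non-vanishing curvature of $\Cc$. Thus $16A_\Cc(\mu)-L^2>0$ for every probability measure $\mu$ on $\Sc^1$ invariant under rotation by $\tfrac\pi2$, and uniformity in $n$ follows from the weak-$*$ compactness of the set of such measures together with the weak-$*$ continuity of $\mu\mapsto A_\Cc(\mu)$.
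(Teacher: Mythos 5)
Your overall route for the variance asymptotics is the same as the paper's (one more term in the Taylor expansion of $K_2$, reduction to lattice-point sums, separation of quadruples with $\lambda_1+\lambda_2+\lambda_3+\lambda_4=0$ from the rest, Lemma \ref{remainder4} via $\delta$-separation, and the static identity $4B_{\Cc}(\mu_n)=L^2$ to kill the second-order block), and your positivity argument is correct and in fact a slight streamlining of the paper's: writing $16A_{\Cc}(\mu)-L^2=16\,\Var_{\mu\times\mu}(F)$ using $\int\langle\theta,\dot\gamma(t)\rangle^2 d\mu(\theta)\equiv\tfrac12$ replaces the paper's min/max over the four rotated copies of $B(\theta_1,\theta_2)$, but the equality-case analysis (continuity of $F$, $I(\gamma)=0$ from \cite[Corollary 7.2]{R-W}, $\int_0^L\cos(4(\psi-\phi(t)))dt=-L$ forcing a straight segment) and the compactness argument for uniformity are exactly the paper's.

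The genuine gap is in the error control where $|r_n|$ is not small. You treat this as a ``near-diagonal'' issue to be handled by viewing $f_n$ as a perturbation of a rescaled stationary process, but the singular region is \emph{not} confined to a neighbourhood of the diagonal: it consists of all squares $S_{i,j}$ on which $|r_n|>1/2$ somewhere, and these can occur for $t_1,t_2$ far apart. Since each such square contributes $O(1)$ to the variance and there are $\asymp E_n\cdot\mathrm{meas}(B)$ of them, one must show $\mathrm{meas}(B)=o(\Nc_n^{-2})$ — strictly better than what a Chebyshev bound with the fourth moment $\int\!\int r_n^4\asymp \Nc_n^{-2}$ gives, and precisely the place where the $O(n/\Nc_n^{3/2})$ error of \cite{R-W} must be beaten. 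The paper does this (Lemmas \ref{lemmasing}, \ref{areaB}, \ref{lemma6}, \ref{remainder6}) by bounding the \emph{sixth} moment $\int\!\int r_n^6=o(\Nc_n^{-2})$, which requires two nontrivial arithmetic inputs absent from your sketch: the Bombieri--Bourgain bound $|S_6(n)|=O(\Nc_n^{7/2})$ for six correlations (in fact the estimate that the number of $6$-tuples with a fixed nonzero sum is $o(\Nc_n^4)$, a modification of \cite[Theorem 2.2]{K-K-W}), combined with $\delta$-separation for the off-diagonal six-fold sums. The same sixth-moment estimates are also what controls the $O(r^6+\dots)$ Taylor remainder on the non-singular region; your claim that these remainders are $O(n/\Nc_n^3)+o(n/\Nc_n^2)$ is unsupported as stated (even with Bombieri--Bourgain one only gets $O(n\Nc_n^{-5/2})$ for the $r^6$ term), and without the spectral-correlation input the required $o(n/\Nc_n^2)$ bound does not follow from the ``sharper book-keeping'' you defer to.
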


Next we assert that, under the assumptions of Theorem \ref{mainth2new}, the fourth term in the chaotic series \paref{chaosexp} dominates.

\begin{proposition}\label{prop-var4}
Let $\mathcal C\subset \mathbb T$ be a static curve of length $L$, and $\lbrace n\rbrace\subset S$ a $\delta$-separated sequence
such that $\mathcal N_n\to +\infty$. Then
\begin{equation}\label{var_chaos2}
\Var(\mathcal Z_n[2]) = o\left(\frac{n}{\mathcal N_n^2}\right),
\end{equation}
and
\begin{equation}\label{var_chaos4}
\Var(\mathcal Z_n[4]) \sim \frac{n}{4\mathcal N^2_n} \left(16A_{\mathcal C}(\mu_n) -L^2\right ).
\end{equation}
\end{proposition}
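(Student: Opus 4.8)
The plan is to compute the two variances directly from the explicit formula \eqref{proj} for the chaotic projections, reducing each to a sum over $\Lambda_n$ of integrals involving the covariance $r_n(t_1,t_2)$ and its derivatives, and then to extract the leading asymptotics using the $\delta$-separatedness hypothesis. First I would record that, since $f_n(t)$ and $\widetilde f_n'(t)$ are independent for each fixed $t$, the variance of $\Zc_n[2q]$ is a finite sum (over $\ell,\ell'=0,\dots,q$) of terms of the form
\[
2\pi^2 n\, b_{2q-2\ell}a_{2\ell}b_{2q-2\ell'}a_{2\ell'}\int_0^L\!\!\int_0^L
\E\big[H_{2q-2\ell}(f_n(t_1))H_{2q-2\ell'}(f_n(t_2))\big]\,
\E\big[H_{2\ell}(\widetilde f_n'(t_1))H_{2\ell'}(\widetilde f_n'(t_2))\big]\,dt_1dt_2,
\]
and each Hermite expectation, by the diagram/Mehler formula, is a constant times a power of the relevant correlation: $r_n(t_1,t_2)$, the normalised derivative correlations $\partial_i r_n/\sqrt\alpha$, and $\partial_1\partial_2 r_n/\alpha$. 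Thus everything is expressed through the four basic two-point functions, exactly as in the Kac--Rice analysis of \cite{R-W}.

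For \eqref{var_chaos2}: the leading contribution to $\Var(\Zc_n[2])$ is governed by the $\ell=\ell'$ terms that are quadratic in the two-point functions; summing over $\lambda\in\Lambda_n$ one recovers precisely the expression $(4B_{\Cc}(\mu_n)-L^2)\cdot \tfrac{n}{\Nc_n}$ already identified in \eqref{igor_var}. Since $\Cc$ is static, $4B_{\Cc}(\mu_n)-L^2\equiv 0$, so this term vanishes identically; what remains is bounded by the ``off-diagonal'' error terms, which (following the proof of the approximate Kac--Rice formula in \S\ref{SproofKac}, i.e. \eqref{kr-approx}) are controlled by quantities such as \eqref{quantity4}. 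By Lemma \ref{remainder4} (invoking $\delta$-separatedness) these are $o(\tfrac{n}{\Nc_n^2})$, or more precisely the whole remainder after cancellation of the $B_{\Cc}$-term is $o(\tfrac{n}{\Nc_n^2})$, which is \eqref{var_chaos2}. Here one must be slightly careful: one cannot merely use $\Var(\Zc_n[2])\le \Var(\Zc_n)$ together with Proposition \ref{kac-rice}, since that only gives $O(\tfrac{n}{\Nc_n^2})$; genuine cancellation inside the second-chaos variance (the static condition killing the diagonal term, then $\delta$-separatedness killing the off-diagonal remainder) is needed to upgrade $O$ to $o$.

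For \eqref{var_chaos4}: the fourth-chaos variance $\Var(\Zc_n[4])$ is, after the same reduction, a sum of double integrals of degree-$4$ polynomials in the two-point functions. The dominant contribution comes from the ``diagonal'' pairing $\lambda_1=\lambda_3$, $\lambda_2=\lambda_4$ (and symmetric variants), where the product of two-point functions integrates against $\int_0^L\int_0^L$ to produce, after summing over $\lambda,\lambda'\in\Lambda_n$, exactly the quantity $A_{\Cc}(\mu_n)$ of \eqref{Amu} together with the lower-order pieces assembling into $16A_{\Cc}(\mu_n)-L^2$; the combinatorial constant $\tfrac{1}{4}$ and the factor $\tfrac{n}{\Nc_n^2}$ appear from the $a,b$ coefficients \eqref{a}--\eqref{b} and the normalisation $\alpha=2\pi^2 n$. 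All remaining terms involve at least one factor of the form $\tfrac{1}{\Nc_n^2}\sum \|\lambda_1+\lambda_2+\lambda_3+\lambda_4\|^{-1}$ over non-vanishing sums, and are $o(\tfrac{n}{\Nc_n^2})$ by Lemma \ref{remainder4}. Matching this against the right-hand side of \eqref{var_chaos4} (and noting that $16A_{\Cc}(\mu_n)-L^2$ is bounded away from zero by Proposition \ref{kac-rice}) completes the proof.

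\emph{Main obstacle.} The hard part is the bookkeeping in the fourth-chaos computation: organising the many Wiener--It\^o/diagram pairings of $H_{2\ell}(f_n(t_i))$ and $H_{2\ell'}(\widetilde f_n'(t_i))$, identifying which $(\lambda_1,\lambda_2,\lambda_3,\lambda_4)$-configurations contribute to the main term versus which feed into \eqref{quantity4}, and checking that the geometric integrals $\int_0^L\langle\theta,\dot\gamma\rangle^2\langle\theta',\dot\gamma\rangle^2dt$ reassemble correctly into $A_{\Cc}(\mu_n)$ with the right constants. The analytic input — bounding the off-diagonal remainders via $\delta$-separatedness and Lemma \ref{remainder4}, together with the non-degeneracy $16A_{\Cc}(\mu_n)-L^2\gg 1$ — is by comparison routine once Proposition \ref{kac-rice} and \S\ref{SproofKac} are available.
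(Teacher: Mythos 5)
There is a genuine gap at the very first step of your reduction. You factor
\begin{equation*}
\E\big[H_{2q-2\ell}(f_n(t_1))H_{2\ell}(\widetilde f_n'(t_1))\,H_{2q-2\ell'}(f_n(t_2))H_{2\ell'}(\widetilde f_n'(t_2))\big]
=\E\big[H_{2q-2\ell}(f_n(t_1))H_{2q-2\ell'}(f_n(t_2))\big]\,\E\big[H_{2\ell}(\widetilde f_n'(t_1))H_{2\ell'}(\widetilde f_n'(t_2))\big],
\end{equation*}
justified by the independence of $f_n(t)$ and $\widetilde f_n'(t)$ at the \emph{same} time $t$. That independence does not make the two processes independent: $\Cov(f_n(t_1),\widetilde f_n'(t_2))=r_2(t_1,t_2)/\sqrt{\alpha}\ne 0$ for $t_1\ne t_2$, so the correct diagram formula contains mixed pairings involving $r_1/\sqrt{\alpha}$ and $r_2/\sqrt{\alpha}$, which your displayed formula discards (and which, incidentally, cannot appear at all in a factored expression, contradicting your own claim that all four two-point functions enter). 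This is not a cosmetic issue: for $q=1$ the correct computation gives
\begin{equation*}
\Var(\Zc_n[2])=n\left[\iint r^2\,dt_1dt_2-2\iint (r_2/\sqrt{\alpha})^2\,dt_1dt_2+\iint (r_{12}/\alpha)^2\,dt_1dt_2\right]+\text{error},
\end{equation*}
and by Lemma \ref{lemma2} the three diagonal contributions are $\approx L^2/\Nc_n$, $\approx L^2/\Nc_n$ and $\approx 4B_{\Cc}(\mu_n)/\Nc_n$ respectively, so the main term $\frac{n}{\Nc_n}(4B_{\Cc}(\mu_n)-L^2)$ vanishes for static curves \emph{only because of} the middle, cross-covariance term. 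With the factorisation as you wrote it, the cross term is absent and one would obtain $\asymp \frac{n}{\Nc_n}(4B_{\Cc}+L^2)\asymp n/\Nc_n$, contradicting \eqref{var_chaos2} itself. The same problem propagates to the fourth chaos: the mixed-pairing contributions are exactly the quantities $4)$--$8)$ of Lemma \ref{lemma4} (e.g.\ $\iint (r_{12}/\alpha)r(r_1/\sqrt{\alpha})(r_2/\sqrt{\alpha})$), they are of the same order $\Nc_n^{-2}$ as the terms you keep, so the claimed assembly of the main term into $16A_{\Cc}(\mu_n)-L^2$ does not follow from your reduction. A second, smaller omission: even with the correct diagram bookkeeping, the fourth-chaos main term comes out as $\frac{n}{4\Nc_n^2}(16A_{\Cc}+24B_{\Cc}-7L^2)$, and you must invoke staticity a second time ($4B_{\Cc}(\mu_n)=L^2$) to convert this into $16A_{\Cc}-L^2$; your sketch treats staticity as only killing the second-chaos term.

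Your overall strategy (express the chaos variances through moments of $r,r_1,r_2,r_{12}$, then use Lemma \ref{remainder4} and $\delta$-separatedness for the off-diagonal remainders) is viable once the full diagram formula replaces the false factorisation, and your remark that one cannot simply use $\Var(\Zc_n[2])\le\Var(\Zc_n)$ is correct. Note, however, that the paper proceeds differently: it works in the arithmetic variables $a_\lambda$, splitting $\Zc_n[2]=\Zc_n^a[2]+\Zc_n^b[2]$ (Lemma \ref{lemproj2}), observing that $\Zc_n^a[2]\equiv 0$ for static curves and bounding $\Var(\Zc_n^b[2])$ by van der Corput plus $\delta$-separatedness; for the fourth chaos it expresses $\Zc_n[4]$ through the variables $W_1(n),W_2^t(n)$ (Lemmas \ref{H1} and \ref{lemmachaos4}) and computes the diagonal variance exactly, which avoids the delicate cross-pairing cancellations your route must handle by hand.
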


The above implies that it suffices to study the asymptotic distribution of the fourth chaotic projection.

\begin{proposition}\label{4chaos}
Let $\mathcal C\subset \mathbb T$ be a static curve of length $L$, and $\lbrace n\rbrace\subset S$ a $\delta$-separated sequence such that
$\mathcal N_n\to +\infty$, and $\mu_n \Rightarrow \mu$. Then
\begin{equation}
\label{eq:Zn4->M(mu)}
\frac{\mathcal Z_n[4] }{\sqrt{\Var(\mathcal Z_n[4])}}  \mathop{\to}^{d}  \mathcal M(\mu),
\end{equation}
where $\mathcal M(\mu)$ is given by \paref{M}.
\end{proposition}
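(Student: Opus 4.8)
The plan is to show that, under the $\delta$-separatedness hypothesis, $\mathcal{Z}_n[4]$ coincides in $L^2(\mathbb{P})$, up to an error negligible compared with $\sqrt{\Var(\mathcal{Z}_n[4])}$, with an explicit random variable lying in the second Wiener chaos of a two-dimensional Gaussian space, and then to pass to the limit using $\mu_n\Rightarrow\mu$ together with a (multivariate) central limit theorem and the continuous mapping theorem, in the spirit of \cite[Theorem 11.7.1]{Du}.

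\textbf{Step 1 (diagonal/resonant reduction).} Start from the explicit fourth-chaos formula \paref{proj} with $q=2$ and insert the Fourier expansions $f_n(t)=\mathcal{N}_n^{-1/2}\sum_{\lambda\in\Lambda_n}a_\lambda\,e^{2\pi i\langle\lambda,\gamma(t)\rangle}$ and $\widetilde f'_n(t)=i\sqrt 2\,\mathcal{N}_n^{-1/2}\sum_{\lambda\in\Lambda_n}a_\lambda\langle\theta_\lambda,\dot\gamma(t)\rangle\,e^{2\pi i\langle\lambda,\gamma(t)\rangle}$, where $\theta_\lambda=\lambda/\sqrt n$. Expressing each $H_{4-2\ell}(f_n(t))H_{2\ell}(\widetilde f'_n(t))$ as a Wick product (the two factors being independent for fixed $t$), $\mathcal{Z}_n[4]$ becomes $\sqrt{2\pi^2 n}\,\mathcal{N}_n^{-2}$ times a sum over quadruples $(\lambda_1,\lambda_2,\lambda_3,\lambda_4)\in\Lambda_n^4$ of $:\!a_{\lambda_1}a_{\lambda_2}a_{\lambda_3}a_{\lambda_4}\!:$ against an oscillatory integral $\int_0^L W(t;\lambda)\,e^{2\pi i\langle\lambda_1+\lambda_2+\lambda_3+\lambda_4,\gamma(t)\rangle}\,dt$, with $W$ a product of the $\langle\theta_{\lambda_j},\dot\gamma(t)\rangle$ attached to the $\widetilde f'_n$-factors. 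Exactly as in the proofs of propositions \ref{kac-rice}--\ref{prop-var4}, quadruples with $\lambda_1+\cdots+\lambda_4\neq 0$ contribute $o(n/\mathcal{N}_n^2)$ to $\E[\mathcal{Z}_n[4]^2]$: $W$ is bounded, the nonvanishing curvature of $\mathcal{C}$ gives $\int_0^L W\,e^{2\pi i\langle v,\gamma(t)\rangle}\,dt=O(\|v\|^{-1/2})$, and $\delta$-separatedness controls the ensuing lattice sums --- this is precisely the content of Lemma \ref{remainder4} and the companion estimates in the Appendix (in particular \paref{quantity4} is $o(1)$). Among the resonant quadruples $\lambda_1+\cdots+\lambda_4=0$, the two circles $\|x\|^2=n$ and $\|x-v\|^2=n$ meeting in at most two points forces, outside a set of $O(\mathcal{N}_n)$ exceptions whose contribution is also negligible, that the multiset $\{\lambda_1,\lambda_2,\lambda_3,\lambda_4\}$ equals $\{\lambda,-\lambda,\mu,-\mu\}$ for some $\lambda\neq\pm\mu$, and then $:\!a_{\lambda_1}a_{\lambda_2}a_{\lambda_3}a_{\lambda_4}\!:\;=\xi_\lambda\xi_\mu$, where $\xi_\lambda:=|a_\lambda|^2-\E|a_\lambda|^2$.

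\textbf{Step 2 (reduction to a $2$-dimensional quadratic form).} Carrying out the sum over orderings for each of the three terms in \paref{proj} and using $\langle\theta_{-\lambda},\dot\gamma\rangle=-\langle\theta_\lambda,\dot\gamma\rangle$ together with the static identities $\int_0^L\langle\theta,\dot\gamma(t)\rangle^2\,dt=L/2$ (for all $\theta$) and $\int_0^L f=\int_0^L g=0$, the surviving part of $\mathcal{Z}_n[4]$ becomes a linear combination of $(\sum_\lambda\xi_\lambda)^2-2\sum_\lambda\xi_\lambda^2$, of $\sum_{\lambda\neq\pm\mu}K(\theta_\lambda,\theta_\mu)\xi_\lambda\xi_\mu$ and of $\sum_{\lambda\neq\pm\mu}\bigl(\int_0^L\langle\theta_\lambda,\dot\gamma\rangle\langle\theta_\mu,\dot\gamma\rangle\,dt\bigr)\xi_\lambda\xi_\mu$, where $K(\theta,\theta'):=\int_0^L\langle\theta,\dot\gamma\rangle^2\langle\theta',\dot\gamma\rangle^2\,dt$ is the kernel appearing in $A_{\mathcal{C}}(\mu)$ (see \paref{Amu}). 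Writing $\theta_\lambda=e^{i\psi_\lambda}$, the static identities yield
\[
K(\theta,\theta')=\tfrac L4+\cos2\psi\cos2\psi'\!\!\int_0^L\! f^2+(\cos2\psi\sin2\psi'+\sin2\psi\cos2\psi')\!\!\int_0^L\! fg+\sin2\psi\sin2\psi'\!\!\int_0^L\! g^2,
\]
and $\int_0^L\langle\theta,\dot\gamma\rangle\langle\theta',\dot\gamma\rangle\,dt=\tfrac L2\cos(\psi-\psi')$. Hence the whole expression is a quadratic form in the three random sums $S_0:=\sum_{\lambda\in\Lambda_n}\xi_\lambda$, $S_c:=\sum_{\lambda\in\Lambda_n}\cos(2\psi_\lambda)\xi_\lambda$, $S_s:=\sum_{\lambda\in\Lambda_n}\sin(2\psi_\lambda)\xi_\lambda$ --- the $\cos(\psi-\psi')$-pieces contribute only via the first-mode sums $\sum_\lambda\cos(\psi_\lambda)\xi_\lambda$, $\sum_\lambda\sin(\psi_\lambda)\xi_\lambda$, which vanish identically upon pairing $\lambda\leftrightarrow-\lambda$. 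The crucial point is an explicit linear identity among the Hermite coefficients of \paref{b}--\paref{a}, namely $3 b_4 a_0+b_2 a_2+3 b_0 a_4=0$, which makes the coefficient of the isotropic combination $S_0^2-2\sum_\lambda\xi_\lambda^2$ vanish; consequently, in $L^2(\mathbb{P})$,
\[
\mathcal{Z}_n[4]=\frac{c\sqrt n}{\mathcal{N}_n^2}\Bigl(\,p_n\bigl(S_c^2-\E[S_c^2]\bigr)+2q_n\,S_cS_s+r_n\bigl(S_s^2-\E[S_s^2]\bigr)\Bigr)+o\!\Bigl(\tfrac{\sqrt n}{\mathcal{N}_n}\Bigr),
\]
with $c$ an explicit constant and $(p_n,q_n,r_n)$ built out of $\int_0^L f^2$, $\int_0^L fg$, $\int_0^L g^2$ and $\widehat{\mu_n}(4)$; here $\E[S_c^2]=\mathcal{N}_n\Var(\xi_\lambda)(1+\widehat{\mu_n}(4))$, $\E[S_s^2]=\mathcal{N}_n\Var(\xi_\lambda)(1-\widehat{\mu_n}(4))$, and $\E[S_cS_s]=0$ since $\widehat{\mu_n}(4)\in\mathbb{R}$.

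\textbf{Step 3 (central limit theorem and conclusion).} The variables $\{\xi_\lambda\}$, with $\lambda$ ranging over a set of representatives of the $\{\pm\lambda\}$-classes in $\Lambda_n$, are independent with uniformly bounded moments of all orders; a multivariate Lindeberg CLT gives joint convergence in law of $\bigl(S_c/\sqrt{\E[S_c^2]},\,S_s/\sqrt{\E[S_s^2]}\bigr)$ to $(Z_1,Z_2)$, a pair of i.i.d.\ standard Gaussians (independent in the limit because $\E[S_cS_s]=0$). Since $\mu_n\Rightarrow\mu$, we have $\widehat{\mu_n}(4)\to\widehat\mu(4)$, and the coefficients $p_n,q_n,r_n$, the normalizations $\E[S_c^2],\E[S_s^2]$, and $\Var(\mathcal{Z}_n[4])\sim\frac{n}{4\mathcal{N}_n^2}(16A_{\mathcal{C}}(\mu_n)-L^2)$ (Proposition \ref{prop-var4}) all converge to their $\mu$-analogues. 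Dividing the displayed identity by $\sqrt{\Var(\mathcal{Z}_n[4])}$ and applying the continuous mapping theorem and Slutsky's lemma, $\mathcal{Z}_n[4]/\sqrt{\Var(\mathcal{Z}_n[4])}$ converges in distribution to the unit-variance quadratic form in $(Z_1,Z_2)$ whose coefficients, after a routine evaluation of $16A_{\mathcal{C}}(\mu)-L^2$ from the formula for $K$, are exactly $a_i(\mu)/\sqrt{16A_{\mathcal{C}}(\mu)-L^2}$, $i=1,2,3$; this identifies the limit with $\mathcal{M}(\mu)$ of \paref{M}, establishing \paref{eq:Zn4->M(mu)}. The main obstacle is Step 1, the passage to the resonant/non-degenerate diagonal: bounding the non-resonant quadruples requires the oscillatory-integral estimate from the nonvanishing curvature of $\mathcal{C}$ combined with the $\delta$-separatedness bounds for lattice sums such as \paref{quantity4} --- this is exactly where $\delta$-separatedness is indispensable, and it is shared with the proof of Proposition \ref{prop-var4}. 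A secondary difficulty is the combinatorial bookkeeping in Step 2 (the sum over orderings and the Hermite-coefficient cancellation of the isotropic mode) and the explicit matching of the surviving quadratic form with $\mathcal{M}(\mu)$.
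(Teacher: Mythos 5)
Your proposal is correct in outline and, at the finite-$n$ level, performs essentially the same reduction as the paper: expanding the three Hermite integrals in \paref{proj4} over quadruples of $\Lambda_n$, discarding the non-resonant quadruples via the curvature/Van der Corput bound together with $\delta$-separatedness (Lemma \ref{remainder4}), and observing that the surviving resonant part is, thanks to the static identities, a quadratic form in the second-harmonic statistics $S_c,S_s$ built from $\xi_\lambda=|a_\lambda|^2-1$ — this is exactly the content of lemmas \ref{H1}, \ref{lemmachaos4}, \ref{lemmaLC} and \ref{intermediate4}, only organized differently: your Hermite-coefficient identity $3b_4a_0+b_2a_2+3b_0a_4=0$ (which is indeed true, and must be combined with staticity, as you do) plays the role of Lemma \ref{lemmaLC} applied to the combination $3X_n^a-Y_n^a-6Z_n^a$, since $W_2^t(n)-\frac1L\int_0^L W_2^u(n)\,du\propto f(t)S_c+g(t)S_s$. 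Where you genuinely diverge is the limit step: the paper proves weak convergence of the process $W_2(n)$ to the Gaussian process $W_2(\mu)$ (Proposition \ref{mainth2}, via Lindeberg for the finite-dimensional laws plus Prokhorov), and then identifies the limiting functional $\mathcal I(\mu)$ with $\mathcal M(\mu)$ through a Wiener--It\^o representation of $W_2(\mu)$ and diagonalization of the $3\times 3$ covariance matrix of $(N_1,N_2,N_3)$; you instead stay finite-dimensional throughout, applying a bivariate Lindeberg CLT to $(S_c,S_s)$ and the continuous mapping theorem, which is more elementary and bypasses tightness and the stochastic-integral representation, at the cost of the combinatorial bookkeeping and the explicit coefficient matching you defer. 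One caution: normalizing by $\sqrt{\E[S_c^2]}=\sqrt{\mathcal N_n(1+\widehat{\mu_n}(4))}$ (and similarly for $S_s$) is dangerous when $\widehat{\mu}(4)=\pm1$ (Cilleruelo or tilted Cilleruelo limits), since then one of these variances is $o(\mathcal N_n)$ and the CLT for that normalized component may fail; the harmless fix is to normalize both statistics by $\sqrt{\mathcal N_n}$, prove joint convergence to a centred Gaussian vector with covariance $\operatorname{diag}\!\left(1+\widehat\mu(4),\,1-\widehat\mu(4)\right)$ (possibly degenerate), and note that the degenerate direction carries coefficient $a_1(\mu)$ or $a_2(\mu)$ equal to zero in \paref{M}, so the identification of the limit with $\mathcal M(\mu)$ is unaffected.
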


\begin{proof}[Proof of Theorem \ref{mainth2new} assuming propositions \ref{kac-rice}-\ref{4chaos}]
By \paref{chaosexp} we write
$$
\frac{\mathcal Z_n - \E[\mathcal Z_n]}{\sqrt{\Var(\mathcal Z_n)}}= \sum_{q=1}^{+\infty} \frac{\mathcal Z_n[2q]}{\sqrt{\Var(\mathcal Z_n)}}.
$$
Thanks to Proposition \ref{kac-rice}, Proposition \ref{prop-var4} and the orthogonality of Wiener chaoses we have, as $\mathcal N_n\to \infty$,
$$
\frac{\mathcal Z_n - \E[\mathcal Z_n]}{\sqrt{\Var(\mathcal Z_n)}} =  \frac{\mathcal Z_n[4]}{\sqrt{\Var(\mathcal Z_n[4])}}  + o_{\mathbb P}(1),
$$
where $o_{\mathbb P}(1)$ denotes convergence to zero in probability. In particular, the distribution of the normalized total number of nodal intersections is asymptotic to the one of $$\frac{\mathcal Z_n[4]}{\sqrt{\Var(\mathcal Z_n[4])}}.$$ Therefore \eqref{eq:Zn4->M(mu)}
also holds with $\Zc_{n}$ in place of $\mathcal Z_n[4]$.
In particular, this implies
$$
d \left ( \frac{\mathcal Z_n - \E[\mathcal Z_n]}{\sqrt{\Var(\mathcal Z_n)}}, \mathcal M(\mu_n) \right )\to 0,
$$
where $d$ is any metric which metrizes convergence in distribution of random variables, or the Kolmogorov distance (see e.g. \cite[\S C]{N-P}), since $\mathcal M (\mu)$ in \paref{M}
has absolutely continuous distribution for arbitrary probability measure $\mu$. Theorem \ref{mainth2new} is then a direct consequence \new{of \cite[Theorem 11.7.1]{Du}} and of the fact that the sequence $\left \lbrace \left (\mathcal Z_n - \E[\mathcal Z_n]\right )/ \sqrt{\Var(\mathcal Z_n)} \right \rbrace$ is bounded in $L^2(\mathbb P)$.

\end{proof}

\section{Proofs of Proposition \ref{secondvar} and Proposition \ref{prop1}}\label{Sproofs2}

In this section we investigate the asymptotic behaviour of the second chaotic
component $\mathcal Z_n[2]$ of $\Zc_{n}$. Our starting point is the
decomposition of $\Zc_{n}[2]$ into ``diagonal" and ``off-diagonal" terms: we define
the diagonal term
\begin{equation}\label{eq2}
\begin{split}
\mathcal Z_n^a[2]:= \frac{\sqrt{2\pi^2n}}{2\pi}  \frac{1}{\mathcal N_n}\,2
\sum_{\lambda\in \Lambda_n^+}
(|a_\lambda|^2 -1) \left ( 2 \int_0^L
\left \langle \frac{\lambda}{|\lambda|}, \dot \gamma(t)\right \rangle^2\,dt  -L \right),
\end{split}
\end{equation}
where if $\sqrt{n}$ is not an integer
$
\Lambda_n^+ := \lbrace \lambda \in \Lambda_n : \lambda_2 >0 \rbrace,
$
otherwise
$$
\Lambda_n^+:=\lbrace \lambda\in \Lambda_n : \lambda_2 >0\rbrace \cup \lbrace (\sqrt{n}, 0)\rbrace.
$$
The off-diagonal term is
\begin{equation}\label{eq3}
\begin{split}
\mathcal Z_n^b[2]:= \frac{\sqrt{2\pi^2n}}{2\pi} \frac{1}{\mathcal N_n} \sum_{\lambda\ne \lambda'}
a_\lambda \overline{a_{\lambda'}} \int_0^L \left(2\left \langle \frac{\lambda}{|\lambda|},
\dot \gamma(t)\right \rangle \left \langle \frac{\lambda'}{|\lambda|}, \dot \gamma(t)\right \rangle
 -1 \right)\e^{i2\pi\langle \lambda -\lambda', \gamma(t)\rangle}\,dt.
\end{split}
\end{equation}
\new{Recall that, by construction, $Z_n[2]$, $Z_n^a[2]$ and $Z_n^b[2]$ are all mean zero, so their variances are
equal to their respective second moments.}

\begin{lemma}\label{lemproj2}
For every $n\in S$ we have
\begin{equation}\label{proj2formula}
\mathcal Z_n[2]=\mathcal Z_n^a[2]+\mathcal Z_n^b[2].
\end{equation}
\end{lemma}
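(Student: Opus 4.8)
The plan is to start from the chaotic projection formula \eqref{proj} specialised to $q=2$, namely
\[
\mathcal Z_n[2] = \sqrt{2\pi^2 n}\left(b_4 a_0 \int_0^L H_4(f_n(t))\,dt + b_2 a_2 \int_0^L H_2(f_n(t)) H_2(\widetilde f_n'(t))\,dt + b_0 a_4 \int_0^L H_4(\widetilde f_n'(t))\,dt\right),
\]
and to substitute the expansions of $f_n(t)$ and $\widetilde f_n'(t)$ in terms of the Gaussian coefficients $a_\lambda$. Writing $f_n(t) = \frac{1}{\sqrt{\mathcal N_n}}\sum_\lambda a_\lambda \e^{i2\pi\langle\lambda,\gamma(t)\rangle}$ and $\widetilde f_n'(t) = \frac{1}{\sqrt{2\pi^2 n}}\langle\nabla T_n(\gamma(t)),\dot\gamma(t)\rangle = \frac{2\pi i}{\sqrt{2\pi^2 n}\sqrt{\mathcal N_n}}\sum_\lambda a_\lambda \langle\lambda,\dot\gamma(t)\rangle\e^{i2\pi\langle\lambda,\gamma(t)\rangle}$, one expresses each Hermite polynomial appearing above via the product formula (so that, e.g., $H_2(f_n(t)) = f_n(t)^2 - 1$ expands into a double sum over $\lambda,\lambda'$ of $a_\lambda a_{\lambda'}$-terms minus the normalisation, and similarly for the products $H_2 H_2$ and for $H_4$), and then collects all the resulting bilinear terms $a_\lambda \overline{a_{\lambda'}}$ after using the reality relation \eqref{eq:a_-lambda=conj} to replace $a_{-\lambda'}$ by $\overline{a_{\lambda'}}$.

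The key structural observation is that, because $\mathcal Z_n[2]$ lies in the second Wiener chaos, after this expansion every term is a (renormalised) quadratic $a_\lambda\overline{a_{\lambda'}}$; there is no genuine fourth-order contribution, the apparent $H_4$ terms being reabsorbed by the contractions built into the product formula and the independence of $f_n(t)$ and $\widetilde f_n'(t)$ at each fixed $t$. The terms split naturally according to whether $\lambda = \lambda'$ or $\lambda \ne \lambda'$. The diagonal part $\lambda=\lambda'$ contributes $|a_\lambda|^2$, which combines across the pair $\pm\lambda$ (hence the sum over $\Lambda_n^+$ and the factor $2$) and, after collecting the coefficients $b_4 a_0$, $b_2 a_2$, $b_0 a_4$ and the inner integrals $\int_0^L \langle\lambda/|\lambda|,\dot\gamma(t)\rangle^2\,dt$ and $\int_0^L \langle\lambda/|\lambda|,\dot\gamma(t)\rangle^4\,dt$ etc., must be matched exactly with the expression \eqref{eq2} defining $\mathcal Z_n^a[2]$; using $|\dot\gamma(t)| = 1$ so that $\langle\lambda/|\lambda|,\dot\gamma(t)\rangle^2 + \langle\lambda^\perp/|\lambda|,\dot\gamma(t)\rangle^2 = 1$ lets one reduce the quartic integrals to affine functions of the quadratic one, which is precisely what produces the combination $2\int_0^L\langle\lambda/|\lambda|,\dot\gamma\rangle^2 dt - L$. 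The off-diagonal part $\lambda\ne\lambda'$ carries the oscillatory factor $\e^{i2\pi\langle\lambda-\lambda',\gamma(t)\rangle}$ and the cross inner product $\langle\lambda/|\lambda|,\dot\gamma(t)\rangle\langle\lambda'/|\lambda|,\dot\gamma(t)\rangle$, and again after the same trigonometric bookkeeping collapses to the integrand $2\langle\lambda/|\lambda|,\dot\gamma\rangle\langle\lambda'/|\lambda|,\dot\gamma\rangle - 1$ of \eqref{eq3}.

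The main obstacle, and the only part requiring real care, is the bookkeeping of the numerical constants: one must verify that the combination $\sqrt{2\pi^2 n}\,(b_4 a_0, b_2 a_2, b_0 a_4)$ with the explicit values \eqref{b}--\eqref{a} (recalling $H_2(0) = -1$, $H_4(0) = 3$, so $b_2 = -\frac{1}{2\sqrt{2\pi}}$, $b_4 = \frac{1}{8\sqrt{2\pi}}$, and $a_0 = \sqrt{2/\pi}$, $a_2 = \frac{1}{2}\sqrt{2/\pi}$, $a_4 = -\frac{1}{8}\sqrt{2/\pi}$) reproduces exactly the prefactor $\frac{\sqrt{2\pi^2 n}}{2\pi}\cdot\frac{1}{\mathcal N_n}$ appearing in \eqref{eq2} and \eqref{eq3}, after the $\e^{i2\pi\langle\lambda,x\rangle}$ normalisations of $f_n$ and $\widetilde f_n'$ contribute their factors of $2\pi$ and $\frac{1}{\sqrt{2\pi^2 n}}$. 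I would organise this as a direct computation: expand, separate diagonal from off-diagonal, apply the pointwise identity $\langle\theta,\dot\gamma\rangle^2 + \langle\theta^\perp,\dot\gamma\rangle^2 = 1$ to linearise the quartic integrands, and check the constants line up. Since both $\mathcal Z_n[2]$ and $\mathcal Z_n^a[2]+\mathcal Z_n^b[2]$ are elements of the second chaos determined by their $a_\lambda\overline{a_{\lambda'}}$-coefficients, matching those coefficients term by term establishes the claimed identity \eqref{proj2formula}.
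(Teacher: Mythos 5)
Your starting point is wrong, and the error is structural rather than notational. In \eqref{proj} the bracket $[2q]$ labels the order of the Wiener chaos, so the second chaotic component $\mathcal Z_n[2]$ is obtained by specialising \eqref{proj} to $q=1$, which yields only two terms,
\[
\mathcal Z_n[2]=\sqrt{2\pi^2n}\left(b_2a_0\int_0^L H_2(f_n(t))\,dt+b_0a_2\int_0^L H_2(\widetilde f_n'(t))\,dt\right),
\]
with no $H_4$ contributions at all. What you wrote down (the $q=2$ specialisation, with $b_4a_0\int H_4(f_n)$, $b_2a_2\int H_2H_2$ and $b_0a_4\int H_4(\widetilde f_n')$) is precisely the fourth chaotic component $\mathcal Z_n[4]$ in \eqref{proj4}, a different random variable. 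The claim you invoke to reconcile this --- that the $H_4$ terms are ``reabsorbed by the contractions'' and leave only quadratic expressions $a_\lambda\overline{a_{\lambda'}}$ --- is false: $\int_0^L H_4(f_n(t))\,dt$ is a genuine element of the fourth Wiener chaos (its projection onto the second chaos vanishes), and in the paper this very integral produces the quartic sums $a_\lambda\overline{a_{\lambda'}}a_{\lambda''}\overline{a_{\lambda'''}}$ analysed in Lemma \ref{H1}. As written, your computation would be attempting to prove $\mathcal Z_n[4]=\mathcal Z_n^a[2]+\mathcal Z_n^b[2]$, whose two sides do not even lie in the same chaos.

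The remainder of your plan --- expand $f_n$ and $\widetilde f_n'$ in the coefficients $a_\lambda$, use $a_{-\lambda}=\overline{a_\lambda}$, split the diagonal $\lambda=\lambda'$ from the off-diagonal $\lambda\neq\lambda'$ terms, and match constants --- is sound and is exactly how the paper proceeds, but starting from the correct two-term formula above. Two further corrections are needed even then. First, the combination $2\int_0^L\langle\lambda/|\lambda|,\dot\gamma(t)\rangle^2\,dt-L$ does not arise from ``linearising quartic integrands'' via $\langle\theta,\dot\gamma\rangle^2+\langle\theta^\perp,\dot\gamma\rangle^2=1$ (there are no quartic integrands in the second chaos); it arises from adding the $H_2(f_n)$ and $H_2(\widetilde f_n')$ contributions with the opposite-sign coefficients $b_2a_0=-1/(2\pi)=-b_0a_2$, while the re-centring of $|a_\lambda|^2$ to $|a_\lambda|^2-1$ on the diagonal uses the identity of Lemma \ref{lem:ident sum squares prod}. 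Second, for the record, \eqref{a} gives $a_4=-\tfrac{1}{24}\sqrt{2/\pi}$, not $-\tfrac18\sqrt{2/\pi}$, though this coefficient becomes irrelevant once you start from $q=1$.
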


The proof of Lemma \ref{lemproj2} will be given in Appendix \ref{Schaos2}. \new{We are now in a position to give
a proof for Proposition \ref{secondvar}; first it is useful to recall the following identity:
\begin{lemma}[{\cite[Lemma 2.3]{R-W2}}]
\label{lem:ident sum squares prod}
For every $z\in \R^{2}$ we have the following equality:
\begin{equation}
\label{eq:ident sum squares prod}
\frac{1}{\Nc_{n}}\sum_{\lambda\in \Lambda_n}\left\langle z ,\, \frac{\lambda}{|\lambda|}\right\rangle = \frac{1}{2}|z|^{2}.
\end{equation}
\end{lemma}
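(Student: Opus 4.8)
The plan is to deduce the identity purely from the $\tfrac{\pi}{2}$-rotational symmetry of the lattice-point set $\Lambda_{n}$. Write $R\colon\R^{2}\to\R^{2}$ for the rotation by $\tfrac{\pi}{2}$; since $R$ preserves both $\Z^{2}$ and the Euclidean norm, it maps $\Lambda_{n}$ bijectively onto itself, and correspondingly $\mu_{n}$ is $R$-invariant. The case $z=0$ being trivial, fix $z\in\R^{2}\setminus\{0\}$.

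First I would record the elementary pointwise identity: for every unit vector $\theta\in\Sc^{1}$, writing $\theta^{\perp}=R\theta$, the pair $\{\theta,\theta^{\perp}\}$ is an orthonormal basis of $\R^{2}$, so
$$
\langle z,\theta\rangle^{2}+\langle z,\theta^{\perp}\rangle^{2}=|z|^{2}.
$$
Applying this with $\theta=\lambda/|\lambda|$ for each $\lambda\in\Lambda_{n}$ and averaging gives
$$
\frac{1}{\Nc_{n}}\sum_{\lambda\in\Lambda_{n}}\left\langle z,\frac{\lambda}{|\lambda|}\right\rangle^{2}
+\frac{1}{\Nc_{n}}\sum_{\lambda\in\Lambda_{n}}\left\langle z,\frac{R\lambda}{|\lambda|}\right\rangle^{2}
=|z|^{2}.
$$
Then I would perform the change of summation variable $\lambda\mapsto R\lambda$ in the second sum: since $\lambda\mapsto R\lambda$ is a bijection of $\Lambda_{n}$ onto itself and $|R\lambda|=|\lambda|$, that sum equals the first one. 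Hence twice the left-hand side of the claimed identity equals $|z|^{2}$, which is exactly \eqref{eq:ident sum squares prod}.

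As an alternative (and a sanity check) one can argue in coordinates: with $z=(z_{1},z_{2})$ and $|\lambda|^{2}=n$ one expands $\langle z,\lambda/|\lambda|\rangle^{2}=\tfrac{1}{n}\left(z_{1}^{2}\lambda_{1}^{2}+2z_{1}z_{2}\lambda_{1}\lambda_{2}+z_{2}^{2}\lambda_{2}^{2}\right)$; the symmetry $(\lambda_{1},\lambda_{2})\mapsto(\lambda_{1},-\lambda_{2})$ of $\Lambda_{n}$ kills the cross term, while the symmetry $(\lambda_{1},\lambda_{2})\mapsto(\lambda_{2},\lambda_{1})$ together with $\sum_{\lambda\in\Lambda_{n}}(\lambda_{1}^{2}+\lambda_{2}^{2})=\Nc_{n}n$ forces $\sum_{\lambda}\lambda_{1}^{2}=\sum_{\lambda}\lambda_{2}^{2}=\tfrac{1}{2}\Nc_{n}n$, and the identity follows. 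There is no genuine obstacle here: the statement is a pure symmetry computation, and the only point requiring (minimal) care is the justification that $\Lambda_{n}$ is invariant under the relevant rotation/reflections, which is immediate from $R(\Z^{2})=\Z^{2}$ and $\|R\lambda\|=\|\lambda\|$.
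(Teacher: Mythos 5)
Your argument is correct and is essentially the same symmetry computation as in the cited source \cite[Lemma 2.3]{R-W2}, which the paper invokes without reproducing a proof: one averages the pointwise identity $\langle z,\theta\rangle^{2}+\langle z,R\theta\rangle^{2}=|z|^{2}$ over $\theta=\lambda/|\lambda|$, $\lambda\in\Lambda_{n}$, and uses that the rotation $R$ by $\frac{\pi}{2}$ permutes $\Lambda_{n}$, so both your main route and your coordinate sanity check are fine. Note that what you have (correctly) proved is the intended statement with $\left\langle z,\lambda/|\lambda|\right\rangle^{2}$ on the left-hand side; as printed in \eqref{eq:ident sum squares prod} the exponent $2$ is missing, and the literal unsquared statement would be false (that sum vanishes by the symmetry $\lambda\mapsto-\lambda$), the squared version being the one actually used, e.g., in \eqref{varcomp}.
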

}

\begin{proof}[Proof of Proposition \ref{secondvar} assuming Lemma \ref{lemproj2}]
Lemma \ref{lemproj2} yields
\begin{equation}\label{vv}
\Var(\mathcal Z_n[2])=\Var(\mathcal Z_n^a[2])+\Var(\mathcal Z_n^b[2]) + 2 \Cov(\mathcal Z_n^a[2], \mathcal Z_n^b[2]).
\end{equation}
Let us first study $\Var(\mathcal Z_n^a[2])$. By the definition \eqref{eq2} of $\mathcal Z_n^a[2]$ we may compute its variance to be
\begin{equation}\label{varcomp}
\begin{split}
\Var(\mathcal Z_n^a[2])&=2\frac{n}{\mathcal N_n^2}\sum_{\lambda\in \Lambda_n^+}
\left ( 2 \int_0^L
\left \langle \frac{\lambda}{|\lambda|}, \dot \gamma(t)\right \rangle^2\,dt  -L \right)^2\cr
&= \frac{n}{\mathcal N_n^2}\sum_{\lambda\in \Lambda_n}
\left ( 2 \int_0^L
\left \langle \frac{\lambda}{|\lambda|}, \dot \gamma(t)\right \rangle^2\,dt  -L \right)^2\cr
&= \frac{n}{\mathcal N_n}\bigg(4\int_0^L \int_0^L \frac{1}{\mathcal N_n}\sum_{\lambda\in \Lambda_n}
\left \langle \frac{\lambda}{|\lambda|}, \dot \gamma(t_1)\right \rangle^2\left \langle \frac{\lambda}{|\lambda|}, \dot \gamma(t_2)\right \rangle^2\,dt_1 dt_2 \\&\new{- 4L\int_0^L \frac{1}{\mathcal N_n}\sum_{\lambda\in \Lambda_n}\left\langle \frac{\lambda}{|\lambda|}, \dot \gamma(t)\right \rangle^2dt}
+L^2   \bigg)\cr
&\new{=\frac{n}{\mathcal N_n} (4B_{\mathcal C}(\mu_n) - 2L^{2}+L^2)}=\frac{n}{\mathcal N_n} (4B_{\mathcal C}(\mu_n) - L^2),
\end{split}
\end{equation}
\new{where we used \eqref{eq:ident sum squares prod} with $z=\dot \gamma(t)$ to get from the $4$th to the $5$th line of \eqref{varcomp}}.
Next we evaluate the variance of $\mathcal Z_n^b[2]$: by \paref{eq3} it is given by
\begin{equation}\label{var2b}
\begin{split}
&\Var(\mathcal Z_n^b[2])
=\frac{n}{2{\mathcal N}_n^2}  \sum_{\lambda\ne \lambda',\lambda''\ne \lambda'''}
\E[a_\lambda \overline{a_{\lambda'}}a_{\lambda''} \overline{a_{\lambda'''}}] \times \\&\times\int_0^L \left(2\left \langle \frac{\lambda}{|\lambda|},
\dot \gamma(t)\right \rangle \left \langle \frac{\lambda'}{|\lambda|}, \dot \gamma(t)\right \rangle
 -1 \right)\e^{i2\pi\langle \lambda -\lambda', \gamma(t)\rangle}\,dt\times\cr
 &\times\int_0^L \left(2\left \langle \frac{\lambda''}{|\lambda''|},
\dot \gamma(t)\right \rangle \left \langle \frac{\lambda'''}{|\lambda'''|}, \dot \gamma(t)\right \rangle
 -1 \right)\e^{i2\pi\langle \lambda'' -\lambda''', \gamma(t)\rangle}\,dt\cr
 &\le \new{C\frac{n}{2{\mathcal N}_n^2}\sum_{\lambda\ne \lambda'}\left | \int_0^L\left(2\left \langle \frac{\lambda}{|\lambda|},
\dot \gamma(t)\right \rangle \left \langle \frac{\lambda'}{|\lambda|}, \dot \gamma(t)\right \rangle
 -1 \right)\e^{i2\pi\langle \lambda -\lambda', \gamma(t)\rangle}\,dt\right |^2},
\end{split}
\end{equation}
for some $C>0$.

Using the non-vanishing curvature assumption on $\Cc$, Van der Corput's Lemma (see e.g. \cite[Lemma 5.2]{R-W}) implies that for $\lambda\ne \lambda'$
the inner oscillatory integral on the r.h.s. of \eqref{var2b} may be bounded as
$$
\new{\left | \int_0^L\left(2\left \langle \frac{\lambda}{|\lambda|},
\dot \gamma(t)\right \rangle \left \langle \frac{\lambda'}{|\lambda|}, \dot \gamma(t)\right \rangle
 -1 \right)\e^{i2\pi\langle \lambda -\lambda', \gamma(t)\rangle}\,dt\right |} \ll \frac{1}{|\lambda-\lambda'|^{1/2}};
$$
this together with \paref{var2b} yield
\begin{equation}\label{good_estimate2}
\Var(\mathcal Z_n^b[2])\ll \frac{n}{\mathcal N_n^2} \sum_{\lambda\ne \lambda'} \frac{1}{|\lambda - \lambda'|}.
\end{equation}
Using the bound
\begin{equation*}
\sum_{\lambda\ne \lambda'} \frac{1}{|\lambda - \lambda'|} \ll_{\epsilon} \Nc_{n}^{\epsilon}
\end{equation*}
for every $\epsilon>0$ from \cite[Proposition 5.3]{R-W} to bound the r.h.s. of \paref{good_estimate2}, and comparing the result with \paref{varcomp} we have
\begin{equation}\label{opiccolo}
\Var(\mathcal Z_n^b[2])=o(\Var(\mathcal Z_n^a[2])).
\end{equation}
Now substituting \eqref{varcomp}, \eqref{opiccolo} into
\eqref{vv}, and using Cauchy-\new{Schwarz} to bound the covariance term in \paref{vv}, finally yield
\begin{equation*}
\Var(\mathcal Z_n[2]) \sim \Var(\mathcal Z_n[2]^a) = \frac{n}{\mathcal N_n} (4B_{\mathcal C}(\mu_n) - L^2).
\end{equation*}
This taking into account \eqref{igor_var}, concludes the proof of Proposition \ref{secondvar}.

\end{proof}

\begin{proof}[Proof of Proposition \ref{prop1} assuming Lemma \ref{lemproj2}]
Thanks to \paref{opiccolo},
it suffices to investigate the asymptotic distribution of $\mathcal Z_n^a[2]$ as in \paref{eq2}. We have
\begin{equation}\label{asymp-proj2}
\begin{split}
\frac{\mathcal Z_n^a[2]}{\sqrt{\Var(\mathcal Z_n^a[2])}}
= \frac{1}{\sqrt{\mathcal N_n/2}} \sum_{\lambda\in \Lambda_n^+}
(|a_\lambda|^2 -1) \frac{2 \int_0^L
\left \langle \frac{\lambda}{|\lambda|}, \dot \gamma(t)\right \rangle^2\,dt  -L }
{\sqrt{ 4B_{\mathcal C}(\mu_n) - L^2}}.
\end{split}
\end{equation}
Now we can apply Lindeberg's criterion (see e.g. \cite[Remark 11.1.2]{N-P}):
since, as $\mathcal N_n\to +\infty$, we have
$$
\max_{\lambda\in \Lambda_n^+} \frac{1}{\sqrt{\mathcal N_n/2}}\left | \frac{2 \int_0^L
\left \langle \frac{\lambda}{|\lambda|}, \dot \gamma(t)\right \rangle^2\,dt  -L }{\sqrt{ 4B_{\mathcal C}(\mu_n) - L^2}}     \right | \to 0,
$$
then
$$
\frac{\mathcal Z^a_n[2]}{\sqrt{\Var(\mathcal Z^a_n[2])}}\mathop{\to}^{d} Z,
$$
where $Z\sim \mathcal N(0,1)$.

\end{proof}

\section{Proof of Proposition \ref{kac-rice}}\label{SproofKac}

The proof of Proposition \ref{kac-rice} is inspired by the proof of Theorem 1.2 in \cite{R-W}, we refer the reader to \cite[\S 1.3]{R-W} and \cite[\S 1.5]{R-W-Y} for a complete discussion.
We will need to inspect the proof of the \emph{approximate} Kac-Rice formula \cite[Proposition 1.3]{R-W},
which gives the asymptotic variance of the nodal intersections number in terms of an explicit integral that involves the covariance function $r=r_n$ in \paref{covf}  and a couple of its derivatives
$$
r_1 := \frac{\partial}{\partial t_1} r,\quad  r_2 := \frac{\partial }{\partial t_2} r,\quad r_{12} := \frac{\partial^2}{\partial t_1 \partial t_2} r,
$$
and then study higher order terms.

\subsection{Auxiliary lemmas}

\new{The following lemma, proved in \S\ref{sec:kac-rice approx pr} below, referred to as ``Approximate Kac-Rice formula"
(due to the existence of the error term on the r.h.s. of \ref{kr-approx}),
and valid for static curves only, is a refined version of
\cite[Proposition 1.3]{R-W}, with more terms in the main expansion, and a smaller error term. Recall that
$\alpha$ is given by \eqref{alpha_var}.}

\begin{lemma}[``Approximate Kac-Rice"]\label{kac-rice approx}
Let $\mathcal C\subset \mathbb T$ be a static curve of total length $L$, and $\lbrace n\rbrace\subset S$ a $\delta$-separated sequence such that $\mathcal N_n\to +\infty$. Then the intersection number variance is asymptotic to
\begin{equation}\label{kr-approx}
\begin{split}
\Var(\mathcal Z_n) &= n\,\int_0^L \int_0^L \Big( \frac34 r^4+\frac{1}{12}(r_{12}/\alpha)^4
- \frac{(r_2/\sqrt{\alpha})^4}{4}  - \frac{(r_1/\sqrt{\alpha})^4}{4} \\&+2(r_{12}/\alpha)r (r_1/\sqrt{\alpha})( r_2/\sqrt{\alpha})+\frac{(r_1/\sqrt{\alpha})^2(r_2/\sqrt{\alpha})^2}{2} - \frac{3}{2}r^2(r_2/\sqrt{\alpha})^2 \\& - \frac{3}{2}r^2(r_1/\sqrt{\alpha})^2+\frac{1}{2} (r_{12}/\alpha)^2 r^2+\frac12 (r_2/\sqrt \alpha)^2 (r_{12}/\alpha)^2 \\&+\frac12 (r_1/\sqrt \alpha)^2 (r_{12}/\alpha)^2\Big)\,dt_1 dt_2
+o\left(\frac{n}{\mathcal N_n^2}\right).
\end{split}
\end{equation}
\end{lemma}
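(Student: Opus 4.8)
The plan is to run the Kac--Rice computation of the second factorial moment $\E[\mathcal Z_n(\mathcal Z_n-1)]$, exactly as in \cite[Proposition 1.3]{R-W} and \cite[\S1.5]{R-W-Y}, but to carry the Taylor expansion of the two--point zero intensity one order beyond the quadratic truncation used there: the extra quartic term is the bracketed integrand in \eqref{kr-approx}, and the whole point is that the quadratic term, which a priori would dominate, \emph{vanishes} for static curves, while the genuinely higher--order remainder is negligible once $\{n\}$ is $\delta$--separated. Concretely, write $\Var(\mathcal Z_n)=\E[\mathcal Z_n(\mathcal Z_n-1)]-\E[\mathcal Z_n]^2+\E[\mathcal Z_n]$; by Lemma \ref{lemma0} and \eqref{eq:N=O(n^o(1))} the term $\E[\mathcal Z_n]=\sqrt{2n}\,L$ is $o(n/\mathcal N_n^2)$ and may be discarded, and the Kac--Rice formula gives $\E[\mathcal Z_n(\mathcal Z_n-1)]=\int_0^L\int_0^L K_2(t_1,t_2)\,dt_1dt_2$ with $K_2$ the two--point zero intensity of the process $f_n$ from \eqref{process}. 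Since $r_n(t,t)\equiv1$ one has $f_n(t)\perp f_n'(t)$, so the Gaussian vector $(f_n(t_1),f_n(t_2),\widetilde f_n'(t_1),\widetilde f_n'(t_2))$, and hence $K_2(t_1,t_2)$, is a function of the four quantities $r:=r_n(t_1,t_2)$, $\rho_1:=r_1/\sqrt\alpha$, $\rho_2:=r_2/\sqrt\alpha$, $\rho_{12}:=r_{12}/\alpha$ alone (each of absolute value $\le1$ by Cauchy--Schwarz), with $\alpha=2\pi^2n$ as in \eqref{alpha_var}; write $K_2=\alpha\,G(r,\rho_1,\rho_2,\rho_{12})$.

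Next I would expand and collect terms. Off the diagonal the covariance matrix is non--degenerate and $G$ is real--analytic near the origin; let $G=\sum_{m\ge0}G_m$ be its decomposition into pieces homogeneous of degree $m$ in $(r,\rho_1,\rho_2,\rho_{12})$. Flipping the sign of any single one of $f_n(t_1),f_n(t_2),\widetilde f_n'(t_1),\widetilde f_n'(t_2)$ leaves $K_2$ invariant but changes the signs of an explicit pair among $(r,\rho_1,\rho_2,\rho_{12})$, and combined with the $t_1\leftrightarrow t_2$ symmetry this forces every monomial $r^j\rho_1^a\rho_2^b\rho_{12}^c$ occurring in $G$ to satisfy $j\equiv a\equiv b\equiv c\pmod{2}$, so only even degrees survive: $G=G_0+G_2+G_4+\cdots$. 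Now $\alpha G_0\equiv\lim K_2=2n$, so $\int\!\!\int\alpha G_0=2nL^2=\E[\mathcal Z_n]^2$ cancels. The degree--two part $\int\!\!\int\alpha G_2$ is exactly the quadratic truncation handled in \cite[Proposition 1.3]{R-W}: its ``diagonal'' lattice contribution equals $\frac{n}{\mathcal N_n}\big(4B_{\mathcal C}(\mu_n)-L^2\big)$, which is \emph{identically zero} by Definition \ref{defstatic} applied to the $\tfrac\pi2$--invariant measure $\mu_n$, while its ``off--diagonal'' contribution is $\ll\frac{n}{\mathcal N_n^2}\sum_{\lambda\ne\lambda'}\|\lambda-\lambda'\|^{-1}$ by van der Corput's lemma (nowhere zero curvature), hence $o(n/\mathcal N_n^2)$ for $\delta$--separated $\{n\}$ since then $\|\lambda-\lambda'\|\gg n^{1/4+\delta}$; this is the same estimate as in \eqref{varcomp}--\eqref{opiccolo}, leading to \eqref{var_chaos2}. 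Finally a direct evaluation of the Gaussian moments defining $G_4$ --- equivalently, extracting the degree--four part of the product of the Hermite expansions \eqref{decompositions}, as in \eqref{chaosexp} --- yields the pointwise identity $\alpha\,G_4(t_1,t_2)=n\cdot\big(\text{bracketed polynomial in }\eqref{kr-approx}\big)$.

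It then remains to show $\int_0^L\int_0^L\alpha\big(G-G_0-G_2-G_4\big)\,dt_1dt_2=o(n/\mathcal N_n^2)$. On the complement of a fixed $|t_1-t_2|$--neighbourhood of the diagonal, $G$ obeys the uniform Taylor bound $\big|G-G_0-G_2-G_4\big|\ll(r^2+\rho_1^2+\rho_2^2+\rho_{12}^2)^3$, reducing matters to proving $\int\!\!\int|r|^j|\rho_1|^a|\rho_2|^b|\rho_{12}|^c\,dt_1dt_2=o(\mathcal N_n^{-2})$ whenever $m:=j+a+b+c\ge6$. Expanding each factor via \eqref{covf}, multiplying out, and using that $\int_0^L e^{2\pi i\langle\nu,\gamma(t)\rangle}dt$ equals $L$ when $\nu=0$ and is $\ll\|\nu\|^{-1/2}$ otherwise (van der Corput), each such integral is $\le\mathcal N_n^{-m}\sum_{\nu}R_m(\nu)\min(L^2,\|\nu\|^{-1})$, where $R_m(\nu)$ counts the $m$--tuples $\lambda_1,\dots,\lambda_m\in\Lambda_n$ with a prescribed $\pm$--sum equal to $\nu$; the $\nu=0$ part is controlled by the higher additive energies of $\Lambda_n$, and the $\nu\ne0$ part by $\mathcal N_n^{-m}\sum_{\lambda_1\pm\cdots\pm\lambda_m\ne0}\|\lambda_1\pm\cdots\pm\lambda_m\|^{-1}$. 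For $m=4$ this off--diagonal sum is exactly $\mathcal N_n^{-2}$ times the quantity \eqref{quantity4}, shown to be $o(1)$ in Lemma \ref{remainder4} by invoking the spacing $\|\lambda-\lambda'\|\gg n^{1/4+\delta}$; the cases $m\ge6$, together with the accompanying additive--energy bounds, follow from the analogous (and slacker) estimates collected in the Appendix, again crucially using $\delta$--separatedness. Finally the omitted strip $|t_1-t_2|$ small --- where $r\to1$ and the two--point density degenerates --- contributes to the variance an amount comparable with $\E[\mathcal Z_n]$, hence $o(n/\mathcal N_n^2)$, and is handled exactly as in \cite[\S1.3]{R-W}.

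The main obstacle is the remainder estimate, specifically the off--diagonal fourth--moment sum: van der Corput's curvature gain by itself only gives $\mathcal N_n^{-4}\sum_{\lambda_1\pm\lambda_2\pm\lambda_3\pm\lambda_4\ne0}\|\lambda_1\pm\lambda_2\pm\lambda_3\pm\lambda_4\|^{-1}\ll_{\varepsilon}\mathcal N_n^{-2+\varepsilon}$, which is \emph{not} $o(\mathcal N_n^{-2})$, so one genuinely needs the quantitative lattice spacing of $\delta$--separated energies to close the gap; this is precisely the content of Lemma \ref{remainder4}, and the analysis of \eqref{quantity4} is the technical heart of the proof. The complementary issues --- bounding the $m\ge6$ moments, verifying the uniform Taylor remainder bound for $G$, and treating the degenerate near--diagonal region --- are more routine but must be carried out with comparable care.
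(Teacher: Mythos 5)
Your overall strategy coincides with the paper's: Kac--Rice for the two-point function, a Taylor expansion of $K_2$ in $(r,r_1/\sqrt\alpha,r_2/\sqrt\alpha,r_{12}/\alpha)$ carried to fourth order (the paper's Lemma \ref{taylorK2}), cancellation of the integrated quadratic part via $4B_{\Cc}(\mu_n)=L^2$ plus van der Corput and $\delta$-separation for its off-diagonal lattice sums (the paper's Lemma \ref{lemma2}), Lemma \ref{remainder4} for the quartic off-diagonal sums, and sixth moments for the remainder. Your parity argument for $G$ and the identification of the quartic part with the bracketed integrand are fine, and your observation that van der Corput alone gives only $\ll_\varepsilon\Nc_n^{-2+\varepsilon}$, so that $\delta$-separatedness is genuinely needed, is exactly the point.

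The genuine gap is in your treatment of the region where the two-point distribution degenerates. You excise only a fixed $|t_1-t_2|$-neighbourhood of the diagonal and assert that on its complement the uniform bound $|G-G_0-G_2-G_4|\ll(r^2+\rho_1^2+\rho_2^2+\rho_{12}^2)^3$ holds. That bound is uniform only on sets where $|r|\le 1-\varepsilon$; being away from the diagonal does not guarantee this, since a priori $|r|$ can be close to $\pm1$ at off-diagonal points, and there both the Taylor control and any useful pointwise control of $K_2$ fail (the conditional density blows up as $|r|\to1$). The paper handles this by partitioning $[0,L]^2$ into squares of side $\asymp 1/\sqrt{E_n}$, declaring a square singular if $|r|>1/2$ somewhere on it, bounding the \emph{measure} of the singular set by Chebyshev--Markov against $\int\!\!\int r^6=o(\Nc_n^{-2})$ (Lemmas \ref{areaB}, \ref{lemma6}, \ref{remainder6}), and bounding the contribution of each singular square to the variance by $O(1)$ via the per-interval bound $\Var(\mathcal Z_i)=O(1)$ (Lemma \ref{Ovar}, Corollary \ref{Ocov}) rather than by integrating $K_2$ there; the total singular contribution is then $O(E_n\cdot\mathrm{meas}(B))=o(n/\Nc_n^2)$. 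Your near-diagonal claim (``contributes an amount comparable with $\E[\mathcal Z_n]$'') is likewise only justified through such a per-interval variance bound, not through Kac--Rice on that region. You already have the sixth-moment estimate in hand, so the repair is available, but as written the argument does not control the off-diagonal set where $|r|$ is large, and this singular-square machinery is a substantive component of the proof rather than a routine remark.
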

To find the asymptotics of the moments of $r$ and its derivatives appearing in the r.h.s. of \paref{kr-approx}, in particular to bound the contribution of ``off-diagonal" terms, we will need the following \new{lemma} whose proof is given in Appendix \ref{Soff-diagonal}.
\begin{lemma}\label{remainder4}
Assume that $\{ n\}\subset S$ is a $\delta$-separated sequence, then
\begin{equation*}
\min_{\substack{\lambda_1, \lambda_2,\lambda_3,\lambda_4\in \Lambda_n \\ \lambda_{1}+\lambda_{2}+\lambda_{3}+\lambda_{4}\ne 0}} \|\lambda_1 + \lambda_2 + \lambda_3 + \lambda_4\| \gg n^{2\delta},
\end{equation*}
where the constant involved in the `` $\gg$" notation is absolute. In particular,
\begin{equation}\label{quantity4_estim}
\frac{1}{\mathcal N_n^2} \sum_{{\substack{\lambda_1, \lambda_2,\lambda_3,\lambda_4\in \Lambda_n \\ \lambda_{1}+\lambda_{2}+\lambda_{3}+\lambda_{4}\ne 0}}} \frac{1}{\|\lambda_1 + \lambda_2 + \lambda_3 + \lambda_4\|} = o(1).
\end{equation}
\end{lemma}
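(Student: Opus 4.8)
The plan is to first establish the separation estimate $\min \|\lambda_1+\lambda_2+\lambda_3+\lambda_4\| \gg n^{2\delta}$ over nonvanishing sums, and then deduce the summation bound \eqref{quantity4_estim} from it together with the lattice-point count \eqref{eq:N=O(n^o(1))}. For the separation estimate, I would fix $\lambda_1,\dots,\lambda_4\in\Lambda_n$ with $v:=\lambda_1+\lambda_2+\lambda_3+\lambda_4\ne 0$ and write $v=(\lambda_1+\lambda_2)+(\lambda_3+\lambda_4)$. Each pair sum $w_{12}:=\lambda_1+\lambda_2$ is a point in $\Z^2$, and since $\lambda_1,\lambda_2$ lie on the circle of radius $\sqrt n$, the standard chord identity $\|\lambda_1+\lambda_2\|^2 = 2n + 2\langle\lambda_1,\lambda_2\rangle = 4n - \|\lambda_1-\lambda_2\|^2$ shows that $\|w_{12}\|^2 \equiv 0 \pmod{1}$ trivially but more usefully that $\|w_{12}\|^2 = 4n - \|\lambda_1-\lambda_2\|^2$. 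Since $v$ is a nonzero integer vector, $\|v\|^2 \ge 1$; but I need the stronger lower bound $n^{2\delta}$, so the crude integrality bound is not enough on its own.

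The key idea is to square and use that $\|v\|^2$ is an integer combined with an upper estimate for $\|v\|^2$ that forces it to be small unless the constituent chords are long. More precisely, I would expand
\[
\|v\|^2 = \Big\|\sum_{i=1}^4 \lambda_i\Big\|^2 = 4n + 2\sum_{i<j}\langle \lambda_i,\lambda_j\rangle,
\]
and rewrite each inner product via $2\langle\lambda_i,\lambda_j\rangle = 2n - \|\lambda_i-\lambda_j\|^2$, giving $\|v\|^2 = 16n - \sum_{i<j}\|\lambda_i-\lambda_j\|^2$. By the $\delta$-separatedness hypothesis \eqref{generic}, each difference with distinct indices satisfies $\|\lambda_i-\lambda_j\| \gg n^{1/4+\delta}$, hence $\|\lambda_i-\lambda_j\|^2 \gg n^{1/2+2\delta}$. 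Now the point is a parity/congruence argument: $\|v\|^2$ and $16n$ are both nonnegative integers, and I want to show $\|v\|^2$ cannot be a small positive integer. Here I would appeal to the Bourgain–Rudnick-type argument — the relevant fact is that if $\|v\|^2$ were a positive integer bounded by $o(n^{2\delta})$, one could realize $v$ as lying on a small circle and derive, via the separation of $\Lambda_n$ and the structure of sums of two squares, a contradiction; concretely, one controls $\langle v, \lambda_i\rangle$ and uses that the $\lambda_i$ are well-spread to bound how small $\|v\|$ can be. This is precisely the content of \cite[Lemma 5]{B-R} extended from two to four summands, so I would either cite that lemma directly or replicate its short argument: a nonzero lattice vector $v$ that is a sum of four radius-$\sqrt n$ lattice points must have $\|v\| \gg n^{2\delta}$ under $\delta$-separation, since otherwise two of the $\lambda_i$ would be forced within distance $\ll n^{1/4+\delta}$ of a common point, violating \eqref{generic}.

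For the second assertion, once $\|\lambda_1+\lambda_2+\lambda_3+\lambda_4\|\gg n^{2\delta}$ uniformly over all nonvanishing quadruple sums, I bound
\[
\frac{1}{\Nc_n^2}\sum_{\substack{\lambda_1,\dots,\lambda_4\in\Lambda_n\\ \sum\lambda_i\ne 0}}\frac{1}{\|\lambda_1+\lambda_2+\lambda_3+\lambda_4\|} \le \frac{1}{\Nc_n^2}\cdot \Nc_n^4 \cdot \frac{C}{n^{2\delta}} = C\,\frac{\Nc_n^2}{n^{2\delta}},
\]
and since $\Nc_n = O(n^{o(1)})$ by \eqref{eq:N=O(n^o(1))}, the right-hand side is $O(n^{o(1)-2\delta}) = o(1)$. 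The main obstacle is the separation estimate itself: transferring the two-summand bound of \cite[Lemma 5]{B-R} to four summands requires checking that the combinatorial/arithmetic argument there — essentially that a short nonzero vector in the difference or sum set of $\Lambda_n$ forces a near-collision among lattice points — survives the extra summands, which it does because one can always pair up the four points (e.g. bound $\|(\lambda_1+\lambda_2) - (-\lambda_3-\lambda_4)\|$ and note $-\lambda_3,-\lambda_4\in\Lambda_n$), reducing to controlling sums of two elements of $\Lambda_n$ lying close to a prescribed point.
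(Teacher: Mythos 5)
Your deduction of \eqref{quantity4_estim} from the separation estimate is fine and is exactly what the paper intends: there are at most $\Nc_n^4$ quadruples, each summand is $\ll n^{-2\delta}$, and $\Nc_n=O(n^{o(1)})$ by \eqref{eq:N=O(n^o(1))} finishes it. The genuine gap is in the separation estimate itself, which is the whole content of the lemma and which you do not actually prove. Citing \cite[Lemma 5]{B-R} is not legitimate here: that lemma is the statement that most $n$ satisfy the two-point gap condition \eqref{generic}, i.e.\ it is the source of the hypothesis of the present lemma, not a result about four-term sums; "extending it from two to four summands" is precisely what has to be done. Your proposed mechanism for the extension — that a small nonzero sum $v=\lambda_1+\lambda_2+\lambda_3+\lambda_4$ would force two of the points (or their negatives) within $\ll n^{1/4+\delta}$ of each other, contradicting \eqref{generic} — is false: writing $v$ as a difference of two chords $u=\lambda_3-\lambda_1$, $w=\lambda_4-\lambda_2$ (after negating points as needed), one can have two nearly parallel chords of nearly equal length, or two nearly diametral chords, for which $\|v\|$ is of order $n^{2\delta}$, far below the minimal gap $n^{1/4+\delta}$, while all eight points $\pm\lambda_i$ remain pairwise $\gg n^{1/4+\delta}$ apart. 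So no collision argument can give the bound; the lower bound $n^{2\delta}$ comes from a quantitative interplay between chord lengths and the angle between the chords, not from near-coincidence of lattice points. Likewise your identity $\|v\|^2=16n-\sum_{i<j}\|\lambda_i-\lambda_j\|^2$ (which moreover uses separation for possibly equal $\lambda_i=\lambda_j$) leads nowhere, as you concede: the pairwise-distance lower bounds are negligible against $16n$, and the promised "parity/congruence argument" is never supplied.

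For comparison, the paper's proof is a two-case planar geometry argument. If the segments $\lambda_1\lambda_3$ and $\lambda_2\lambda_4$ intersect, the law of cosines gives $\|v\|^2\ge 2\|u\|\|w\|(1-\cos\alpha)\gg \|u\|\|w\|\alpha^2$, where $\alpha$ is the angle between the chords; the inscribed-angle fact (Lemma \ref{lem:angle chords}) shows $\alpha$ is half the sum of two central angles, hence $\alpha\gg n^{-1/4+\delta}$ under \eqref{generic}, and together with $\|u\|\|w\|\gg n^{1/2+2\delta}$ this yields $\|v\|\gg n^{2\delta}$. If the segments do not intersect, one reduces (using $\lambda_3-\lambda_1=(-\lambda_1)-(-\lambda_3)$) to nested arcs and bounds $\|v\|\ge\bigl|\|u\|-\|w\|\bigr| = 2\sqrt{n}\,\sin\!\bigl(\tfrac{\alpha-\beta}{4}\bigr)\cos\!\bigl(\tfrac{\alpha+\beta}{4}\bigr)\gg \sqrt{n}\cdot n^{-1/4+\delta}\cdot n^{-1/4+\delta}=n^{2\delta}$, where $\alpha,\beta$ are the central angles of the two chords. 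Neither the angular lower bound nor the case analysis appears in your sketch, so the key step remains unproved.
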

Let us also introduce some more notation:
\begin{equation}\label{Fmu}
\begin{split}
F_{\mathcal C}(\mu_n):= \int_{0}^L \int_{0}^L \frac{1}{\mathcal N_n^2}\sum_{\lambda,\lambda'} \left \langle \frac{\lambda}{|\lambda|},\dot \gamma(t_1)\right \rangle \left \langle \frac{\lambda}{|\lambda|},\dot \gamma(t_2)\right \rangle
\left \langle \frac{\lambda'}{|\lambda'|},\dot \gamma(t_1)\right \rangle \left \langle \frac{\lambda'}{|\lambda'|},\dot \gamma(t_2)\right \rangle \,dt_1 dt_2.
\end{split}
\end{equation}

We can now state the following lemma\new{, whose proof will be given in Appendix \ref{Smoments} below}.

\begin{lemma}\label{lemma4}
If $\mathcal C\subset \T$ is a smooth curve with nowhere vanishing curvature, then for $\delta$-separated sequences $\lbrace n\rbrace$ such that $\mathcal N_n\to +\infty$, we have
\begin{equation}\label{moments4}
\begin{split}
&1) \int_{\mathcal C} \int_{\mathcal C} r(t_1,t_2)^4\,dt_1 dt_2
=3L^2 \frac{1}{\mathcal N_n^2}+o\left( \mathcal N_n^{-2}\right),\cr
&2) \int_{\mathcal C} \int_{\mathcal C} \left(\frac{1}{\sqrt{\alpha}}r_1(t_1,t_2)\right)^4\,dt_1 dt_2= 3 L^2\frac{1}{\mathcal N_n^2}  + o(\mathcal N_n^{-2}),\cr
&3) \int_{\mathcal C} \int_{\mathcal C} \left(\frac{1}{\alpha}r_{12}(t_1,t_2)\right)^4\,dt_1 dt_2
=2^4\cdot 3A_{\mathcal C}(\mu_n) \frac{1}{\mathcal N_n^2}+ o(\mathcal N_n^{-2}),\cr
&4)  \int_{\mathcal C} \int_{\mathcal C}(r_{12}/\alpha)r (r_1/\sqrt{\alpha})( r_2/\sqrt{\alpha})\,dt_1 dt_2=
-4F_{\mathcal C}(\mu_n) \frac{1}{\mathcal N_n^2} + o(\mathcal N_n^{-2}),\cr
&5)  \int_{\mathcal C} \int_{\mathcal C}(r_1/\sqrt{\alpha})^2(r_2/\sqrt{\alpha})^2\,dt_1 dt_2=
L^2\frac{1}{\mathcal N_n^2}+4\cdot 2\frac{1}{\mathcal N_n^2}F_{\mathcal C}(\mu_n)+ o(\mathcal N_n^{-2}),\cr
&6) \int_{\mathcal C} \int_{\mathcal C}r^2(r_2/\sqrt{\alpha})^2\,dt_1 dt_2
=\frac{1}{\mathcal N_n^2}L^2+ o(\mathcal N_n^{-2}),\cr
&7)\int_{\mathcal C} \int_{\mathcal C}r^2(r_{12}/\alpha)^2\,dt_1 dt_2
=4\frac{1}{\mathcal N_n^2} B_{\mathcal C}(\mu_n)
+8\frac{1}{\mathcal N_n^2} F_{\mathcal C}(\mu_n)+ o(\mathcal N_n^{-2}),\cr
&8)\int_{\mathcal C} \int_{\mathcal C}(r_1/\sqrt \alpha)^2(r_{12}/\alpha)^2\,dt_1 dt_2
=4\frac{1}{\mathcal N_n^2} B_{\mathcal C}(\mu_n) + o(\mathcal N_n^{-2}),
\end{split}
\end{equation}
where $B_{\mathcal C}(\mu_n)$, $A_{\mathcal C}(\mu_n)$ are given in \paref{B} and \paref{Amu} with $\mu=\mu_n$, respectively, and $F_{\mathcal C}(\mu_n)$ in \paref{Fmu}.
\end{lemma}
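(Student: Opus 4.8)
The plan is to turn each of the eight integrals in \eqref{moments4} into an exponential sum over quadruples of lattice points, peel off a diagonal main term, and control the off-diagonal remainder by an oscillatory-integral estimate together with Lemma \ref{remainder4}. First, using $\Lambda_n=-\Lambda_n$ and $|\lambda|^2=n$ on $\Lambda_n$, I would rewrite
\begin{equation*}
r(t_1,t_2)=\frac{1}{\mathcal N_n}\sum_{\lambda\in\Lambda_n}e^{2\pi i\langle\lambda,\,\gamma(t_1)-\gamma(t_2)\rangle},
\end{equation*}
and, differentiating, express $r_1/\sqrt\alpha$, $r_2/\sqrt\alpha$, $r_{12}/\alpha$ as $\frac{1}{\mathcal N_n}\sum_{\lambda}c_\lambda(t_1,t_2)\,e^{2\pi i\langle\lambda,\gamma(t_1)-\gamma(t_2)\rangle}$ with $c_\lambda$ equal to $\sqrt2\,i\langle\lambda/|\lambda|,\dot\gamma(t_1)\rangle$, to $-\sqrt2\,i\langle\lambda/|\lambda|,\dot\gamma(t_2)\rangle$, and to $2\langle\lambda/|\lambda|,\dot\gamma(t_1)\rangle\langle\lambda/|\lambda|,\dot\gamma(t_2)\rangle$, respectively. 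Each integrand in \eqref{moments4} then has the common shape $\frac{1}{\mathcal N_n^4}\sum_{\lambda_1,\dots,\lambda_4\in\Lambda_n}a_{\boldsymbol\lambda}(t_1)\,b_{\boldsymbol\lambda}(t_2)\,e^{2\pi i\langle\lambda_1+\lambda_2+\lambda_3+\lambda_4,\,\gamma(t_1)-\gamma(t_2)\rangle}$, where $a_{\boldsymbol\lambda},b_{\boldsymbol\lambda}$ are products of absolute constants and inner products $\langle\lambda_j/|\lambda_j|,\dot\gamma(\cdot)\rangle$, hence bounded together with all their $t$-derivatives uniformly in $\boldsymbol\lambda$.

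Next I would split according to whether $\lambda_1+\lambda_2+\lambda_3+\lambda_4$ vanishes. On the off-diagonal the double integral factorizes, $\int\!\!\int a_{\boldsymbol\lambda}(t_1)b_{\boldsymbol\lambda}(t_2)e^{2\pi i\langle v,\gamma(t_1)-\gamma(t_2)\rangle}dt_1dt_2=\bigl(\int a_{\boldsymbol\lambda}e^{2\pi i\langle v,\gamma\rangle}\bigr)\bigl(\int b_{\boldsymbol\lambda}e^{-2\pi i\langle v,\gamma\rangle}\bigr)$ with $v=\lambda_1+\dots+\lambda_4\ne0$; by Van der Corput (exploiting the nowhere vanishing curvature of $\Cc$, cf.\ \cite[Lemma 5.2]{R-W}) each factor is $\ll\|v\|^{-1/2}$ with absolute implied constant, so the whole off-diagonal contribution is $\ll\frac{1}{\mathcal N_n^4}\sum_{\lambda_1+\dots+\lambda_4\ne0}\|\lambda_1+\dots+\lambda_4\|^{-1}=\frac{1}{\mathcal N_n^2}\cdot o(1)=o(\mathcal N_n^{-2})$ by Lemma \ref{remainder4} --- this is the only place the $\delta$-separatedness is used. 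For the diagonal I would use the elementary fact that two points of $\Lambda_n$ with prescribed nonzero sum are determined up to order, whence the solution set of $\lambda_1+\lambda_2+\lambda_3+\lambda_4=0$ in $\Lambda_n^4$ is the union of the three ``pairings'' $\{\lambda_2=-\lambda_1,\ \lambda_4=-\lambda_3\}$, $\{\lambda_3=-\lambda_1,\ \lambda_4=-\lambda_2\}$, $\{\lambda_4=-\lambda_1,\ \lambda_3=-\lambda_2\}$, whose pairwise overlaps have size $O(\mathcal N_n)$ and therefore contribute $O(\mathcal N_n^{-3})=o(\mathcal N_n^{-2})$.

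It remains to evaluate each pairing: substitute $\lambda_j=-\lambda_i$ for the two matched indices, simplify the coefficient (routine sign-chasing through the $i$'s and the factors $2,\sqrt2$), integrate in $(t_1,t_2)$ where now the phase is $1$, and sum over the two free lattice points. Since $\Lambda_n=-\Lambda_n$, such a sum vanishes unless its summand is even in each free variable; assigning ``angular degree'' $0$ to $r$, $1$ to $r_1/\sqrt\alpha$ and $r_2/\sqrt\alpha$, and $2$ to $r_{12}/\alpha$ (the number of inner-product factors), the surviving pairings are precisely those matching an even-degree factor with an even-degree one and an odd-degree with an odd-degree one; the surviving sums are then recognized, via Lemma \ref{lem:ident sum squares prod} (to collapse $\frac1{\mathcal N_n}\sum_\lambda\langle\lambda/|\lambda|,\dot\gamma(t)\rangle^2=\tfrac12$) and the definitions \eqref{B}, \eqref{Amu}, \eqref{Fmu}, as the asserted multiples of $L^2$, $B_{\Cc}(\mu_n)$, $A_{\Cc}(\mu_n)$, $F_{\Cc}(\mu_n)$ --- e.g.\ in $1)$ all three pairings survive with value $L^2/\mathcal N_n^2$ each; in $3)$ each gives $2^4A_{\Cc}(\mu_n)/\mathcal N_n^2$; in $4)$ only the pairing matching ($r_{12}/\alpha$ with $r$) and ($r_1/\sqrt\alpha$ with $r_2/\sqrt\alpha$) survives, yielding $-4F_{\Cc}(\mu_n)/\mathcal N_n^2$; in $8)$ only the ($r_1/\sqrt\alpha$, $r_1/\sqrt\alpha$), ($r_{12}/\alpha$, $r_{12}/\alpha$) pairing survives, the cross pairings having summands odd in the free variables. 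I expect the main obstacle to be sheer bookkeeping rather than any new idea: running this computation through for all eight integrands while keeping the combinatorial constants straight, and checking in each case that the cross pairings either vanish by the parity rule or combine into the stated $F_{\Cc}$-terms; the only analytic points of substance are the uniformity (in $\boldsymbol\lambda$) of the Van der Corput bound and the input of Lemma \ref{remainder4}.
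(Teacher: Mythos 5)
Your proposal is correct and follows essentially the same route as the paper's proof: expand each integrand as an exponential sum over quadruples, evaluate the zero-sum (pairing) terms via Lemma \ref{lem:ident sum squares prod} and the definitions of $B_{\Cc}$, $A_{\Cc}$, $F_{\Cc}$, and kill the off-diagonal contribution with the van der Corput bound of \cite[Lemma 5.2]{R-W} combined with Lemma \ref{remainder4}. The only (harmless) deviations are that you derive the pairing structure of zero-sum quadruples by the circle-intersection argument instead of quoting the count $|S_4(n)|=3\mathcal N_n(\mathcal N_n-1)$ from \cite{K-K-W}, and you make the $O(\mathcal N_n^{-3})$ overlap correction and the uniformity of the amplitude bounds explicit.
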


\subsection{Proof of Proposition \ref{kac-rice}}

\begin{proof}[Proof]
Upon substituting \paref{moments4} into \paref{kr-approx}, we obtain
\begin{equation}\label{daje}
\Var(\mathcal Z_n)= \frac{n}{4\mathcal N^2_n} \left ( 16 A_{\mathcal C}(\mu_n) + 24 B_{\mathcal C}(\mu_n) - 7L^2\right)  +o\left (\frac{n}{\mathcal N_n^2}\right )
\end{equation}
after some straightforward manipulations.
Since $\mathcal C$ is static, $4B_{\mathcal C}(\mu_n) = L^2$ so that \paref{daje} is
 \begin{equation*}
\Var(\mathcal Z_n)= \frac{n}{4\mathcal N^2_n} \left ( 16 A_{\mathcal C}(\mu_n) -L^2\right)  +o\left (\frac{n}{\mathcal N_n^2}\right ),
\end{equation*}
which is \paref{eqvar42}.

\vspace{2mm}

\new{

Now we prove that the leading term $16 A_{\mathcal C}(\mu_n) -L^2$ is bounded away from zero, assuming that $\Cc$ is static.
In what follows, we show that if $\Cc$ is static, then $16 A_{\mathcal C}(\mu) -L^2>0$ is strictly positive for
every probability measure on $\Sc^{1}$ invariant w.r.t. rotation by $\frac{\pi}{2}$; by the compactness of
$\frac{\pi}{2}$-invariant probability measures this is sufficient for our purposes.

By denoting the inner integral in \paref{Amu}
$$
B(\theta_1, \theta_2):=\int_0^L \langle \theta_1, \dot \gamma(t)  \rangle^2
\langle \theta_2, \dot \gamma(t)  \rangle^2\,dt,
$$
we may rewrite the definition \paref{Amu} of $A_{\mathcal C}(\mu)$ as
\begin{equation*}\label{AwithB}
A_{\mathcal C}(\mu) = \int_{\mathcal S^1} \int_{\mathcal S^1} B(\theta_1, \theta_2)^2  d\mu(\theta_1) d\mu(\theta_2),
\end{equation*}
For $\theta\in\Sc^{1}$ we denote $\theta^{\perp}$ to be its rotation by $\frac{\pi}{2}$.
Then, for every $\theta_1,\theta_2 \in \mathcal S^1$, we have
\begin{equation}\label{sumL}
B(\theta_1, \theta_2) + B(\theta_1^\perp, \theta_2) + B(\theta_1, \theta_2^\perp) + B(\theta_1^\perp, \theta_2^\perp)= L.
\end{equation}
Upon maximizing and minimizing the function $B(\theta_1, \theta_2)^2 + B(\theta_1^\perp, \theta_2)^2 + B(\theta_1, \theta_2^\perp)^2 + B(\theta_1^\perp, \theta_2^\perp)^2$ under the constraint \paref{sumL}, we get that
\begin{equation}\label{MaxMin}
\frac{L^2}{4}\le B(\theta_1, \theta_2)^2 + B(\theta_1^\perp, \theta_2)^2 + B(\theta_1, \theta_2^\perp)^2 + B(\theta_1^\perp, \theta_2^\perp)^2 \le L^2,
\quad \forall \theta_1, \theta_2\in \mathcal S^1.
\end{equation}

We can exploit the invariance property of the probability measure $\mu$ and of $B(\cdot,\,\cdot)$ w.r.t. sign changes, to rewrite \eqref{AwithB} as
$$
A_{\mathcal C}(\mu) = \int_{\mathcal S^1/i} \int_{\mathcal S^1/i}4 \left( B(\theta_1, \theta_2)^2 + B(\theta_1^\perp, \theta_2)^2 + B(\theta_1, \theta_2^\perp)^2 + B(\theta_1^\perp, \theta_2^\perp)^2\right)  \,d\mu(\theta_1)d\mu(\theta_2),
$$
where $\mathcal S^1/i$ is a quarter of the circle of measure $\mu(\mathcal S^1/i)=1/4$.
By \paref{MaxMin} we have
$$
\frac{L^2}{16} \le A_{\mathcal C}(\mu) \le \frac{L^2}{4},
$$
which implies
$$
0\le 16A_{\mathcal C}(\mu) - L^2\le 3L^2,
$$
and moreover, that
\begin{equation*}
A_{\mathcal C}(\mu)= \frac{L^{2}}{16},
\end{equation*}
if and only if for every $\theta_{1},\theta_{2}\in\supp \mu$ in the support of $\mu$, we have
$$
B(\theta_1, \theta_2)^2 + B(\theta_1^\perp, \theta_2)^2 + B(\theta_1, \theta_2^\perp)^2 + B(\theta_1^\perp, \theta_2^\perp)^2 = \frac{L^2}{4}.
$$
Further, given $\theta_{1},\theta_{2}\in \Sc^{1}$, the minimum in \eqref{MaxMin} is attained, i.e.
$$
B(\theta_1, \theta_2)^2 + B(\theta_1^\perp, \theta_2)^2 + B(\theta_1, \theta_2^\perp)^2 + B(\theta_1^\perp, \theta_2^\perp)^2 = \frac{L^2}{4},
$$
if and only if
$$
B(\theta_1, \theta_2) = B(\theta_1^\perp, \theta_2) = B(\theta_1, \theta_2^\perp) = B(\theta_1^\perp, \theta_2^\perp) = \frac{L}{4}.
$$

\vspace{3mm}

Now assume that for some $\mu$ invariant w.r.t. $\frac{\pi}{2}$-rotation,
we have $$A_{\mathcal C}(\mu)= \frac{L^{2}}{16},$$ so that, by the above, we have
$B(\theta_{0},\theta_{0})=\frac{L}{4}$ for some $\theta_{0}\in\Sc^{1}$ (any $\theta_{0}\in \supp\mu$ would do).
For a unit speed parametrization $\gamma:[0,L]\to \mathcal C$ we have $\dot \gamma(t) = \e^{i\phi(t)}$, where $\phi(t)$ is the angle of the derivative $\dot \gamma(t)$ w.r.t. the coordinate axis. Recalling standard trigonometric identities, we may write
\begin{equation}\label{develop}
\begin{split}
\frac{L}{4}=B(\theta_{0}, \theta_{0}) &= \int_0^L \cos^4(\phi(t)-\theta_{0})\,dt\cr
&=\frac38 L + \frac12 \int_0^L\cos(2(\phi(t)-\theta_{0}))\,dt +\frac18 \int_0^L\cos(4(\phi(t)-\theta_{0}))\,dt.
\end{split}
\end{equation}
From \cite[Corollary 7.2]{R-W} for static curves $\Cc$ we have
\begin{equation}\label{zero}
\int_0^L\cos(2(\phi(t)-\theta_{0}))\,dt=0,
\end{equation}
and substituting \paref{zero} into \paref{develop}, we obtain
\begin{equation}\label{app0}
\int_0^L \cos(4(\phi(t)-\theta_{0}))\,dt = -L.
\end{equation}
The equality \paref{app0} holds true if and only if for every $t\in [0,L]$
$$
\cos(4(\phi(t)-\theta_{0})) = -1,
$$
which certainly implies that $\mathcal C$ is a straight line segment,
which, by our conventions (Definition \ref{defstatic}), is a contradiction to our assumption that $\Cc$ is static.

}

\end{proof}

The rest of this section is dedicated to proving Lemma \ref{kac-rice approx}.

\subsection{Proof of Lemma \ref{kac-rice approx}}

\label{sec:kac-rice approx pr}

Our proof follows along the same path of thought as \cite[Proposition 1.3]{R-W}, except that we add one more term
in the expansion of the $2$-point correlation function \eqref{eqTayK2}, and need to control the error terms using the more difficult
to evaluate higher moments. Thereupon we are going to bring the essence of the proof, highlighting the differences, sometimes
omitting some details identical to both cases.

\subsubsection{Preliminaries}

The main idea is to divide the square $[0,L]^2$ into small sub-squares and apply the usual Kac-Rice formula to ``most" of them, bounding the contribution of the remaining terms (for an extensive discussion see \cite[\S 1.3]{R-W} and \cite[\S 1.5]{R-W-Y}).

Let us divide the interval $[0,L]$ into small sub-intervals of length roughly $1/\sqrt{E_n}$. To be more precise, let $c_0>0$ (chosen as in Lemma \ref{lemmasmall}) and  set $k:=\lfloor L \sqrt{E_n}/c_0 \rfloor+ 1$ and $\delta_0 := L/k$;
consider the sub-intervals, for $i=1,\dots , k$, defined as
$$
I_i := [(i-1)\delta_0, i\delta_0].
$$
Let us now denote by $\mathcal Z_i$ the number of zeros of $f_n$ in $I_i$, so that
$$
\mathcal Z_n = \sum_i \mathcal Z_i,
$$
and therefore
\begin{equation}\label{varsum}
\Var(\mathcal Z_n) = \sum_{i,j} \Cov(\mathcal Z_i, \mathcal Z_j).
\end{equation}
Let us define, for $i,j=1,\dots,k$ the squares
\begin{equation}\label{little_square}
S_{i,j} := I_i\times I_j = [(i-1)\delta_0, i\delta_0]\times [(j-1)\delta_0, j\delta_0],
\end{equation}
so that
$$
[0,L]^2 = \bigcup_{i,j} S_{i,j}.
$$
\begin{definition}\label{def_sing_set}{\rm (cf. \cite[Definition 4.5]{R-W} and \cite[Definition 2.7]{R-W-Y})}
We say that $S_{i,j} \subset [0,L]^2$ in \paref{little_square} is \emph{singular} if there exists $(t_1,t_2)\in S_{i,j}$ such that
$$
\new{|r(t_1,t_2)|} > 1/2.
$$
\end{definition}

\new{For example, all {\em diagonal} squares $S_{i,i}$ are singular, because $r(t,t)\equiv 1$.}
Since $r/\sqrt{E_n}$ is a Lipschitz function with absolute constant, if $S_{i,j}$ is singular, then
$$
\new{|r(t_1,t_2)|} > 1/4
$$
for every $(t_1,t_2)\in S_{i,j}$\new{, provided that $c_{0}$ is chosen sufficiently small.}
Upon invoking \paref{varsum} we may write
\begin{equation}\label{varsum2}
\Var(\mathcal Z_n) = \sum_{i,j : S_{i,j}\text{ non-sing.}} \Cov(\mathcal Z_i, \mathcal Z_j) + \sum_{i,j : S_{i,j}\text{ sing.}} \Cov(\mathcal Z_i, \mathcal Z_j).
\end{equation}

We apply the Kac-Rice formula on the non-singular squares and bound the contribution corresponding to the singular part as follows.

\begin{lemma}\label{lemmasing}
For a $\delta$-separated sequence $\lbrace n\rbrace \subset S$ such that $\mathcal N_n\to +\infty$, we have
\begin{equation}\label{sing2}
\left |\sum_{i,j : S_{i,j}\text{ sing.}} \Cov(\mathcal Z_i, \mathcal Z_j)\right | = o\left ( \frac{n}{\mathcal N_n^{2}}\right ).
\end{equation}
\end{lemma}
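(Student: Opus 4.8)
The plan is to bound the singular contribution by Cauchy--Schwarz and thereby reduce it to two quantitative facts: a uniform bound on the variances of the ``local'' counts $\mathcal Z_i$, and a bound on the number $N_{\mathrm{sing}}:=\#\{(i,j):S_{i,j}\text{ singular}\}$ of singular squares. Indeed $|\Cov(\mathcal Z_i,\mathcal Z_j)|\le\sqrt{\Var(\mathcal Z_i)\Var(\mathcal Z_j)}\le\max_k\Var(\mathcal Z_k)$, so
\begin{equation*}
\Big|\sum_{i,j:\,S_{i,j}\text{ sing.}}\Cov(\mathcal Z_i,\mathcal Z_j)\Big|\le\Big(\max_k\Var(\mathcal Z_k)\Big)\cdot N_{\mathrm{sing}},
\end{equation*}
and it suffices to prove that $\Var(\mathcal Z_k)=O(1)$ uniformly in $k$ and $n$, and that $N_{\mathrm{sing}}=o(n/\mathcal N_n^2)$. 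The first fact is routine: by \eqref{igor_mean} applied to the subarc $\gamma(I_i)$ one has $\E[\mathcal Z_i]=\frac{\sqrt{E_n}}{\pi\sqrt2}\delta_0=O(1)$ since $\delta_0\asymp E_n^{-1/2}$, while $\E[\mathcal Z_i(\mathcal Z_i-1)]$ equals, by the two-point Kac--Rice formula, the double integral over $I_i\times I_i$ of the two-point zero intensity of $f_n$; rescaling $t\mapsto t/\sqrt{E_n}$ turns $I_i$ into an interval of length $\le c_0$ on which the rescaled Gaussian process has uniformly bounded covariance and non-degenerate derivative (by \eqref{alpha_var}), and the intensity has an integrable $O(1/|t_1-t_2|)$ singularity along the diagonal, so this integral is $O(1)$ and hence $\Var(\mathcal Z_i)\le\E[\mathcal Z_i^2]=O(1)$.

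For the count of singular squares we use the observation recorded just before the statement: if $S_{i,j}$ is singular then $|r(t_1,t_2)|>1/4$ for \emph{every} $(t_1,t_2)\in S_{i,j}$. Since the squares $\{S_{i,j}\}$ are pairwise disjoint and tile $[0,L]^2$, summing their areas and applying Chebyshev's inequality gives
\begin{equation*}
N_{\mathrm{sing}}\cdot\delta_0^2\le\operatorname{meas}\big\{(t_1,t_2)\in[0,L]^2:\ |r(t_1,t_2)|>1/4\big\}\le 4^{6}\int_0^L\int_0^L r(t_1,t_2)^{6}\,dt_1\,dt_2 .
\end{equation*}
Recalling $\delta_0^2\asymp n^{-1}$, the proof is complete once we show that the sixth moment on the right is $o(\mathcal N_n^{-2})$.

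To estimate the sixth moment we expand $r(t_1,t_2)=\mathcal N_n^{-1}\sum_{\lambda\in\Lambda_n}e^{2\pi i\langle\lambda,\gamma(t_1)-\gamma(t_2)\rangle}$ and integrate, obtaining $\mathcal N_n^{-6}\sum_{\lambda_1,\dots,\lambda_6\in\Lambda_n}\big|\int_0^L e^{2\pi i\langle\lambda_1+\dots+\lambda_6,\,\gamma(t)\rangle}\,dt\big|^2$, exactly as in Lemma \ref{lemma4}. The ``diagonal'' terms with $\lambda_1+\dots+\lambda_6=0$ contribute $L^2\mathcal N_n^{-6}\cdot\#\{\text{vanishing }6\text{-tuples}\}=O(\mathcal N_n^{-3})$, using that for a $\delta$-separated sequence the number of vanishing $6$-tuples is $O(\mathcal N_n^{3})$ (a bound on representations as sums of lattice points on the circle, in the spirit of Lemma \ref{remainder4}); for the remaining terms Van der Corput's Lemma (non-vanishing curvature of $\mathcal C$) bounds each oscillatory integral by $\ll\|\lambda_1+\dots+\lambda_6\|^{-1/2}$, and the $\delta$-separation lower bound $\|\lambda_1+\dots+\lambda_6\|\gg n^{c\delta}$ for a nonzero sum (the higher-tuple analogue of Lemma \ref{remainder4}) makes their total $\ll n^{-c\delta}$; the near-diagonal region, where $r\approx1$, contributes only $O(E_n^{-1/2})=O(n^{-1/2})$. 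Since $\mathcal N_n=O(n^{o(1)})$ by \eqref{eq:N=O(n^o(1))}, each of $\mathcal N_n^{-3}$, $n^{-c\delta}$ and $n^{-1/2}$ is $o(\mathcal N_n^{-2})$ (for the middle one, $\mathcal N_n^{2}=o(n^{c\delta})$; for the last, $\mathcal N_n^{2}=o(n^{1/4})$), so the sixth moment is $o(\mathcal N_n^{-2})$, whence $N_{\mathrm{sing}}=o(n/\mathcal N_n^2)$ and, combined with $\Var(\mathcal Z_i)=O(1)$, the lemma follows.

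The main obstacle is precisely the sixth-moment estimate, i.e.\ showing that the ``resonant set'' $\{|r|>1/4\}$ has measure $o(\mathcal N_n^{-2})$: this is where one genuinely needs the $\delta$-separatedness of $\{n\}$ (to control both the number of vanishing $6$-tuples and the size of nonzero sums of six lattice points on the circle) and the non-vanishing curvature of $\mathcal C$ (to run Van der Corput on the oscillatory integrals); note that the fourth moment by itself is only $\Theta(\mathcal N_n^{-2})$ by Lemma \ref{lemma4}(1), which is \emph{not} small enough, so passing to a higher even moment is essential. Everything else — the Cauchy--Schwarz reduction and the uniform bound $\Var(\mathcal Z_i)=O(1)$ — is standard.
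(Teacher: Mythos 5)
Your overall route is the same as the paper's: you bound each covariance by the uniform estimate $\Var(\mathcal Z_i)=O(1)$ (the paper's Lemma \ref{Ovar} and Corollary \ref{Ocov}), you convert the count of singular squares into the measure of the set $\{|r|>1/4\}$ and control it by Chebyshev through the sixth moment $\int\int r^6$ (the paper's Lemma \ref{areaB}), and you then expand the sixth moment over $6$-tuples of lattice points, separating vanishing and non-vanishing sums and using Van der Corput for the oscillatory integrals (the paper's Lemma \ref{lemma6}). Up to that point the argument is sound and matches the paper, including the observation that the fourth moment is only $\Theta(\mathcal N_n^{-2})$ and hence insufficient.

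The genuine gap is in how you dispose of the sixth-moment sums, which you yourself identify as the main obstacle. First, for the off-diagonal part you invoke a lower bound $\|\lambda_1+\dots+\lambda_6\|\gg n^{c\delta}$ for nonzero six-fold sums as ``the higher-tuple analogue of Lemma \ref{remainder4}''. No such analogue is proved in the paper or available from $\delta$-separation: Lemma \ref{remainder4} is a geometric statement about four points (two chords of the circle), and its proof does not extend to six points, where, for instance, two nearly balanced triples could in principle produce a small nonzero integer sum; pairwise separation alone does not exclude this. Without that input your off-diagonal sum is only trivially bounded (a nonzero integer vector has norm $\ge 1$), giving $\mathcal N_n^{-6}\cdot\mathcal N_n^{6}\cdot 1$, which is useless. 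This is exactly why the paper proves Lemma \ref{remainder6} by a different and more delicate argument: it splits according to the size of $\|\lambda_1+\dots+\lambda_6\|$, uses Lemma \ref{remainder4} to show that for fixed $\lambda_1,\dots,\lambda_4$ there are only $O(1)$ completions $(\lambda_5,\lambda_6)$ producing a small total sum, and handles bounded nonzero sums by a modification of \cite[Theorem 2.2]{K-K-W} giving $o(\mathcal N_n^4)$ solutions of $\lambda_1+\dots+\lambda_6=v$ for fixed $v$. Second, your claim that the number of vanishing $6$-tuples is $O(\mathcal N_n^3)$ ``in the spirit of Lemma \ref{remainder4}'' is also unsupported and stronger than what is known; the paper uses the Bombieri--Bourgain bound $|S_6(n)|=O(\mathcal N_n^{7/2})$ from \cite{B-B}, which still yields $O(\mathcal N_n^{-5/2})=o(\mathcal N_n^{-2})$. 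The second point is repaired simply by citing \cite{B-B}; the first is a real missing piece of the proof, and supplying it amounts to proving the paper's Lemma \ref{remainder6}.
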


Lemma \ref{lemmasing} will be proved in \S\ref{Ssing}.
Let us now deal with the non-singular part.
If $S_{i,j}$ is non-singular (Definition \ref{def_sing_set}), then $r(t_1, t_2) \ne \pm 1$ for every $(t_1,t_2)\in S_{i,j}$ (note that necessarily $i\ne j$). We can apply Kac-Rice formula (\cite{A-W}, \cite[\new{Equality (3.2)}]{R-W}) to write
\begin{equation}\label{covkr}
\Cov(\mathcal Z_i, \mathcal Z_j) = \int_{I_i} \int_{I_j} \left (K_2(t_1, t_2) - K_{1}(t_1)K_{1}(t_2)\right )\,dt_1 dt_2,
\end{equation}
where (\cite[Lemma 2.1]{R-W})
$$
K_1(t) := \phi_{t}(0)\cdot \E[f'_n(t)|f_n(t)=0]=\sqrt{2} \sqrt{n},
$$
$\phi_t$ being the density of the Gaussian random variable $f_n(t)$.

The function
$K_2(t_1, t_2)$ is the $2$-point correlation function of zeros of the process $f_n$ (see \cite[\S 3.2]{R-W}), defined as follows: for $t_1\ne t_2$
$$
K_2(t_1, t_2) = \phi_{t_1,t_2}(0,0)\cdot   \E\left[ |f_n'(t_1)|\cdot |f_n'(t_2)| | f_n(t_1)=f_n(t_2)=0\right],
$$
$\phi_{t_1,t_2}$ being the probability density of the Gaussian vector $(f_n(t_1), f_n(t_2))$.
The function $K_2$ admits a continuation to a smooth function on the whole of \new{$[0,L]^{2}$} (see \cite{R-W}), though its values at the diagonal are of no significance for our purposes.

\subsubsection{Proof of Lemma \ref{kac-rice approx}}

First we need to Taylor expand the $2$-point correlation function, which will be proven in \S\ref{sectiontaylork2}.
\new{The following is a refinement of \cite[Proposition 3.2]{R-W}, valid for any curve.}

\begin{lemma}\label{taylorK2} For every $\varepsilon >0$, the two-point correlation function $K_2$
satisfies, uniformly for $|r|<1-\varepsilon$,
\begin{equation}\label{eqTayK2}
\begin{split}
K_2(t_1,t_2) = \frac{\alpha}{\pi^2} \Big( &1 +\frac{1}{2} (r_{12}/\alpha)^2 +\frac12 r^2   - \frac{(r_2/\sqrt{\alpha})^2}{2}   - \frac{(r_1/\sqrt{\alpha})^2}{2}
+\frac38 r^4+\frac{1}{24}(r_{12}/\alpha)^4\cr
&- \frac{(r_2/\sqrt{\alpha})^4}{8}  - \frac{(r_1/\sqrt{\alpha})^4}{8} +(r_{12}/\alpha)r (r_1/\sqrt{\alpha})( r_2/\sqrt{\alpha})\cr&+\frac{(r_1/\sqrt{\alpha})^2(r_2/\sqrt{\alpha})^2}{4}
 - \frac{3}{4}r^2(r_2/\sqrt{\alpha})^2  - \frac{3}{4}r^2(r_1/\sqrt{\alpha})^2+\frac{1}{4} (r_{12}/\alpha)^2 r^2\cr
&+\frac14 (r_2/\sqrt \alpha)^2 (r_{12}/\alpha)^2 +\frac14 (r_1/\sqrt \alpha)^2 (r_{12}/\alpha)^2\Big)\cr
&+\alpha O\left(r^6 + (r_1/\sqrt \alpha)^6 +  (r_2/\sqrt \alpha)^6  +  (r_{12}/ \alpha)^6  \right),
\end{split}
\end{equation}
where the constants involved in the $``O"$-notation depend only on $\varepsilon$.
\end{lemma}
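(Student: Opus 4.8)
The plan is to derive an exact closed-form expression for $K_2$ as an explicit real-analytic function of the four quantities $r,r_1,r_2,r_{12}$ and then Taylor expand it to total order four; this follows the route of \cite[Proposition 3.2]{R-W}, the only new inputs being that we carry the expansion two orders further and that a parity argument upgrades the naively expected degree-$5$ remainder to a degree-$6$ one.

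First I would record the covariance structure of the Gaussian vector $(f_n(t_1),f_n(t_2),f_n'(t_1),f_n'(t_2))$. Since $r_n(t,t)\equiv 1$, each $f_n(t_i)$ has unit variance, $f_n(t)\perp f_n'(t)$ at a single point, and $\Var(f_n'(t))=\alpha$ by \eqref{alpha_var}; the remaining covariances are $\Cov(f_n(t_1),f_n(t_2))=r$, $\Cov(f_n'(t_1),f_n(t_2))=r_1$, $\Cov(f_n(t_1),f_n'(t_2))=r_2$, $\Cov(f_n'(t_1),f_n'(t_2))=r_{12}$. Hence $\phi_{t_1,t_2}(0,0)=\tfrac{1}{2\pi\sqrt{1-r^2}}$, and by Gaussian conditioning (Schur complement) the pair $(f_n'(t_1),f_n'(t_2))$, given $f_n(t_1)=f_n(t_2)=0$, is centered Gaussian with covariance
\[
\Omega=\begin{pmatrix} \alpha-\dfrac{r_1^2}{1-r^2} & r_{12}+\dfrac{r\,r_1r_2}{1-r^2} \\[2mm] r_{12}+\dfrac{r\,r_1r_2}{1-r^2} & \alpha-\dfrac{r_2^2}{1-r^2}\end{pmatrix}.
\]
Using the classical identity $\E[|\xi_1||\xi_2|]=\tfrac{2}{\pi}\bigl(\sqrt{\det\Omega}+\Omega_{12}\arcsin(\Omega_{12}/\sqrt{\Omega_{11}\Omega_{22}})\bigr)$ for $(\xi_1,\xi_2)\sim\mathcal N(0,\Omega)$, and introducing the normalized variables $x:=r$, $u:=r_1/\sqrt\alpha$, $v:=r_2/\sqrt\alpha$, $w:=r_{12}/\alpha$ together with $P:=1-\tfrac{u^2}{1-x^2}$, $Q:=1-\tfrac{v^2}{1-x^2}$, $R:=w+\tfrac{uvx}{1-x^2}$, one obtains the exact formula
\[
K_2(t_1,t_2)=\frac{\alpha}{\pi^2}\,G(x,u,v,w),\qquad G(x,u,v,w):=\frac{1}{\sqrt{1-x^2}}\left(\sqrt{PQ-R^2}+R\arcsin\frac{R}{\sqrt{PQ}}\right).
\]
Positivity of $\Omega$ forces $u^2,v^2\le 1-x^2$ and $PQ\ge R^2$, so for $|r|\le 1-\varepsilon$ the point $(x,u,v,w)$ ranges over a compact set $K_\varepsilon$ on which $G$ is continuous (hence bounded), while $G$ is real-analytic near the origin, where $G(0,0,0,0)=1$.

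The key observation is a parity property of $G$, read off directly from the formula: $G$ is invariant both under $(x,u,v,w)\mapsto(-x,-u,-v,-w)$ (which fixes $P,Q$ and sends $R\mapsto -R$) and under $(x,u,v,w)\mapsto(x,-u,-v,w)$ (which fixes $P,Q,R$), because $s\mapsto s\arcsin s$ is even. Consequently every monomial $x^au^bv^cw^d$ occurring in the Taylor expansion of $G$ at the origin has $a+b+c+d$ and $b+c$ even; in particular all odd-degree homogeneous parts vanish, so if $P_4$ denotes the degree-$\le 4$ Taylor polynomial of $G$ then $G-P_4=O\bigl(\|(x,u,v,w)\|^6\bigr)$ near the origin. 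For the complementary range $(x,u,v,w)\in K_\varepsilon$ one simply uses that $G-P_4$ is bounded there while $\|(x,u,v,w)\|$ is bounded below; combining, $G-P_4=O(x^6+u^6+v^6+w^6)$ uniformly for $|r|<1-\varepsilon$ with implied constant depending only on $\varepsilon$, which gives \eqref{eqTayK2} with $\tfrac{\alpha}{\pi^2}P_4$ as the main term.

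It then remains to compute $P_4$: expanding $(1-x^2)^{-1/2}$, $\sqrt{PQ-R^2}$ and $R\arcsin(R/\sqrt{PQ})=\tfrac{R^2}{\sqrt{PQ}}+\tfrac16\tfrac{R^4}{(PQ)^{3/2}}+\cdots$ to total degree four in $(x,u,v,w)$ and collecting terms reproduces exactly the polynomial in \eqref{eqTayK2}; the invariance of $G$ under $u\leftrightarrow v$ (swapping $t_1$ and $t_2$) explains why the $u$- and $v$-coefficients there agree. The bulk of the work, and the only step demanding genuine care, is this last bookkeeping — keeping every monomial of total degree $\le 4$ and checking the numerical coefficients — whereas the reduction of the remainder from degree $5$ to degree $6$ via the parity argument is the single conceptual point beyond \cite[Proposition 3.2]{R-W}.
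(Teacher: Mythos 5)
Your proposal is correct and is essentially the paper's argument: the paper starts from the same exact arcsine-type formula for $K_2$ (quoted from \cite[Lemma 3.2]{R-W}, and identical to the expression you re-derive by Gaussian conditioning plus the $\E[|\xi_1||\xi_2|]$ identity) and Taylor-expands its factors to total degree four, the remainder being of sixth order precisely because only even-degree terms occur. The only differences are presentational: the paper controls the remainder factor by factor (expanding $\mu$, $(1-r^2)^{-3/2}$, $\rho$ and $G(\rho)$ with explicit error terms) rather than via your parity-plus-compactness argument, and the explicit degree-four coefficient collection that you defer as routine bookkeeping is exactly what the paper's proof carries out.
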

We can now prove Lemma \ref{kac-rice approx}.

\begin{proof}[Proof of Lemma \ref{kac-rice approx}]
Substituting \paref{covkr} and \paref{sing2} into \paref{varsum2}, valid for non-singular sets, we may write
$$
\Var(\mathcal Z_n) = \int_{[0,L]^2\setminus B} \left (K_2(t_1, t_2) - K_{1}(t_1)K_{1}(t_2)\right )\,dt_1 dt_2 +o\left(n \mathcal N_n^{-2}\right),
$$
where $B$ denotes the union of all singular sets $S_{i,j}$ (see \paref{defBsing}).
Outside of $B$, we can use Lemma \ref{taylorK2} together with Lemma \ref{remainder6} to bound the error term. Lemma \ref{lemma2} under the assumption $4B_{\mathcal C}(\mu)=L^2$ \new{(and $\alpha$ given by \eqref{alpha_var})} gives
\begin{equation*}
\begin{split}
& \int_0^L \int_0^L \Big((r_{12}/\alpha)^2 + r^2   - (r_2/\sqrt{\alpha})^2  - (r_1/\sqrt{\alpha})^2 \Big)\,dt_1 dt_2 = o(\mathcal N_n^{-2}),
\end{split}
\end{equation*}
and the uniform boundedness of the integrand on the r.h.s. of \paref{kr-approx} allows to conclude the proof.

\end{proof}

\section{Proof of propositions \ref{prop-var4} and \ref{4chaos}}\label{Schaos4}

The following lemma is \eqref{var_chaos2} of Proposition \ref{prop-var4}.

\begin{lemma}\label{var2static}
Let $\mathcal C\subset \mathbb T$ be a static curve on the torus, of total length $L$. Let $\lbrace n\rbrace\subset S$ be a $\delta$-separated sequence such that $\mathcal N_n\to +\infty$, then
$$
\Var(\mathcal Z_n[2]) = o\left(\frac{n}{\mathcal N_n^{2}}   \right).
$$
\end{lemma}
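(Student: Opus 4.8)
The plan is to recycle the decomposition $\mathcal{Z}_n[2]=\mathcal{Z}_n^a[2]+\mathcal{Z}_n^b[2]$ of Lemma \ref{lemproj2} and to exploit the static hypothesis to annihilate the diagonal part outright. Indeed, the variance computation \eqref{varcomp} carried out in the proof of Proposition \ref{secondvar} is an \emph{exact} identity,
\[
\Var(\mathcal{Z}_n^a[2])=\frac{n}{\mathcal{N}_n^2}\sum_{\lambda\in\Lambda_n}\left(2\int_0^L\left\langle\frac{\lambda}{|\lambda|},\dot\gamma(t)\right\rangle^2\,dt-L\right)^2=\frac{n}{\mathcal{N}_n}\left(4B_{\mathcal{C}}(\mu_n)-L^2\right),
\]
whose right-hand side vanishes since $\mathcal{C}$ is static (Definition \ref{defstatic}). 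Being centered with zero variance, $\mathcal{Z}_n^a[2]$ therefore vanishes almost surely; consequently $\Cov(\mathcal{Z}_n^a[2],\mathcal{Z}_n^b[2])=0$ and $\Var(\mathcal{Z}_n[2])=\Var(\mathcal{Z}_n^b[2])$. (Equivalently, one sees from the sum of squares above that $\int_0^L\langle\lambda/|\lambda|,\dot\gamma(t)\rangle^2\,dt\equiv L/2$ for a static curve, so every coefficient in \eqref{eq2} is identically zero.)

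It then remains to estimate $\Var(\mathcal{Z}_n^b[2])$, and here I would reproduce verbatim the reasoning of \S\ref{Sproofs2} leading to \eqref{good_estimate2}: expanding the fourth mixed moment $\E[a_\lambda\overline{a_{\lambda'}}a_{\lambda''}\overline{a_{\lambda'''}}]$ in \eqref{var2b} and invoking the nowhere-vanishing curvature of $\mathcal{C}$ together with Van der Corput's lemma to bound each oscillatory integral by $\ll|\lambda-\lambda'|^{-1/2}$, one obtains
\[
\Var(\mathcal{Z}_n^b[2])\ll\frac{n}{\mathcal{N}_n^2}\sum_{\lambda\ne\lambda'\in\Lambda_n}\frac{1}{|\lambda-\lambda'|}.
\]
The crucial point is that the generic bound $\sum_{\lambda\ne\lambda'}|\lambda-\lambda'|^{-1}\ll_\epsilon\mathcal{N}_n^\epsilon$ from \cite[Proposition 5.3]{R-W}, which sufficed for Proposition \ref{secondvar}, is too crude here: it only yields $\Var(\mathcal{Z}_n^b[2])\ll n\,\mathcal{N}_n^{\epsilon-2}$, which is \emph{not} $o(n/\mathcal{N}_n^2)$. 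Instead I would use the $\delta$-separation hypothesis \eqref{generic}: since $|\lambda-\lambda'|\gg n^{1/4+\delta}$ for all $\lambda\ne\lambda'\in\Lambda_n$ and there are $\mathcal{N}_n(\mathcal{N}_n-1)$ ordered pairs,
\[
\sum_{\lambda\ne\lambda'\in\Lambda_n}\frac{1}{|\lambda-\lambda'|}\ll\mathcal{N}_n^2\,n^{-1/4-\delta},
\]
whence $\Var(\mathcal{Z}_n^b[2])\ll n^{3/4-\delta}$.

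To finish, one converts this power saving into the stated little-$o$ via \eqref{eq:N=O(n^o(1))}: the ratio $n^{3/4-\delta}\big/(n/\mathcal{N}_n^2)=\mathcal{N}_n^2\,n^{-1/4-\delta}=O(n^{o(1)-1/4-\delta})=o(1)$, so that $\Var(\mathcal{Z}_n[2])=\Var(\mathcal{Z}_n^b[2])=o(n/\mathcal{N}_n^2)$, which is the assertion of Lemma \ref{var2static}. The argument has essentially no hard step once the structural observations are in place; the only thing one must be careful about is precisely this replacement of the ``generic'' off-diagonal estimate of \cite[Proposition 5.3]{R-W} by the quantitative separation bound \eqref{generic}, everything else being the bookkeeping already performed in \S\ref{Sproofs2}.
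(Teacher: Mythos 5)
Your proposal is correct and follows essentially the same route as the paper: staticity kills the diagonal term $\mathcal Z_n^a[2]$ via the exact identity \eqref{varcomp}, and then \eqref{good_estimate2} combined with the $\delta$-separation bound $|\lambda-\lambda'|\gg n^{1/4+\delta}$ and $\mathcal N_n=O(n^{o(1)})$ gives $\Var(\mathcal Z_n^b[2])=o(n/\mathcal N_n^2)$. Your explicit remark that the generic $\mathcal N_n^{\epsilon}$ bound of \cite[Proposition 5.3]{R-W} would not suffice here, and that the separation hypothesis is what rescues the estimate, is exactly the point the paper's proof relies on.
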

\begin{proof}
First, since $\mathcal C$ is static, $\mathcal Z_n^a[2]\equiv 0$ by \paref{varcomp}.
Hence we have
$$
\mathcal Z_n[2] =  \mathcal Z_n^b [2],
$$
by Lemma \ref{lemproj2}.
Concerning $\mathcal Z_n^b$ we invoke \paref{good_estimate2} to bound
$$
\Var\left( \mathcal Z_n^b [2]  \right)\ll \frac{n}{\mathcal N_n^{2}} \sum_{\lambda\ne \lambda'} \frac{1}{|\lambda - \lambda'|}
\ll  \frac{n}{\mathcal N_n^{2}} \cdot \frac{\Nc_{n}^{2}}{\new{n^{1/4+\delta}}} = o\left( \frac{n}{\mathcal N_n^{2}}  \right),
$$
since our sequence of energy levels is assumed to be $\delta$-separated \paref{generic}.

\end{proof}

In what follows we study the limiting distribution of $\mathcal Z_n[4]$. We have the following explicit formula for the fourth chaotic component
$\mathcal Z_n[4]$ upon substituting $q=2$ in \eqref{proj}:
\begin{equation}\label{proj4}
\begin{split}
\mathcal Z_n[4] = \sqrt{2\pi^2n}\Big ( &b_4 a_0  \int_0^L H_{4}(f_n(t))\,dt \cr
&+b_2 a_2 \int_0^L H_{2}(f_n(t)) H_{2}(\widetilde f_n'(t))\,dt  +b_0a_4 \int_0^L  H_{4}(\widetilde f_n'(t))\,dt    \Big ).
\end{split}
\end{equation}

\subsection{Preliminaries}

Let us define the random variables ($n\in S$)
\begin{equation}\label{W1}
W_1(n) := \frac{1}{\sqrt{\mathcal N_n/2}}\sum_{\lambda\in \Lambda_n^+} (|a_\lambda|^2 -1),
\end{equation}
and the random processes
\begin{equation}\label{W}
W_2^t(n):=\frac{1}{\sqrt{\mathcal N_n/2}}\sum_{\lambda\in \Lambda_n^+} (|a_\lambda|^2 -1)
\, 2\left\langle \frac{\lambda}{|\lambda|}, \dot \gamma(t)\right\rangle^2.
\end{equation}
on $t\in [0,L]$.
In Lemma \ref{lemmachaos4} below we will find an explicit formula for $\Zc_{n}[4]$ in terms of the $W_{1}(n)$ and $W_{2}(n)$ and their by-products. To this end we first express each of the three terms in \paref{proj4} in terms of $W_{1}(n)$ and $W_{2}(n)$ (proven in Appendix \ref{appendix4}).

\begin{lemma}\label{H1}
For a $\delta$-separated sequence $\lbrace n \rbrace\subset S$ such that $\mathcal N_n\to +\infty$, we
have
\begin{equation*}
\begin{split}
&\int_0^L H_{4}(f_n(t))\,dt =\ X_n^a + X_n^b,\cr
&\int_0^L H_{4}(f'_n(t))\,dt
=Y_n^a + Y_n^b,\cr
&\int_0^L H_{2}(f_n(t))H_{2}(f'_n(t))\,dt
=  Z_n^a + Z_n^b ,
\end{split}
\end{equation*}
where for $n\in S$
\begin{equation}\label{xyz}
\begin{split}
&X_n^a := \frac{6L}{\mathcal N_n}(W_1(n)^2  - 1),\cr
&Y_n^a:=\frac{6L}{\mathcal N_n} \left (\int_0^L W_2^t(n)^2\,dt  - 4 \frac{1}{\mathcal N_n}\sum_{\lambda}
\int_0^L \left\langle \frac{\lambda}{|\lambda|}, \dot \gamma(t) \right\rangle^4\,dt \right),\cr
&Z_n^a:= \frac{2}{\mathcal N_n} \left (W_1(n)
\int_0^L  W_2^t(n)\,dt - L \right),
\end{split}
\end{equation}
and as $\mathcal N_n\to +\infty$
\begin{equation}\label{smallvariance}
\Var(X_n^b) = o\left (\frac{1}{\mathcal N_n^2} \right ), \Var(Y_n^b) = o\left (\frac{1}{\mathcal N_n^2} \right ), \Var(Z_n^b) = o\left (\frac{1}{\mathcal N_n^2} \right ).
\end{equation}
\end{lemma}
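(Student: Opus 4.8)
The plan is to feed the spectral expansions of $f_n$ and $\widetilde f_n'$ into the Hermite polynomials $H_2(x)=x^2-1$, $H_4(x)=x^4-6x^2+3$, expand the resulting powers into multiple sums over $\Lambda_n$, integrate in $t\in[0,L]$, and separate a \emph{diagonal} contribution (where the frequencies sum to zero) from an \emph{off-diagonal} oscillatory one. Using $|\lambda|=\sqrt n$ for $\lambda\in\Lambda_n$ and $\alpha=2\pi^2 n$, one has $f_n(t)=\mathcal N_n^{-1/2}\sum_{\lambda\in\Lambda_n}a_\lambda e^{2\pi i\langle\lambda,\gamma(t)\rangle}$ and, by differentiating, $\widetilde f_n'(t)=i\sqrt2\,\mathcal N_n^{-1/2}\sum_{\lambda\in\Lambda_n}a_\lambda\langle\lambda/|\lambda|,\dot\gamma(t)\rangle e^{2\pi i\langle\lambda,\gamma(t)\rangle}$. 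Substituting into $H_4(f_n)$, $H_4(\widetilde f_n')$ and $H_2(f_n)H_2(\widetilde f_n')$ and expanding, each quartic monomial yields a fourfold sum over $(\lambda_1,\lambda_2,\lambda_3,\lambda_4)\in\Lambda_n^4$ weighted by $e^{2\pi i\langle\lambda_1+\lambda_2+\lambda_3+\lambda_4,\gamma(t)\rangle}$ (times the bounded smooth amplitude $\prod_j\langle\lambda_j/|\lambda_j|,\dot\gamma(t)\rangle$ on the $\widetilde f_n'$-factors), while the $-6x^2$ correction in $H_4$ and the mixed terms in $H_2\cdot H_2$ yield twofold sums weighted by $e^{2\pi i\langle\lambda_1+\lambda_2,\gamma(t)\rangle}$; after integrating over $[0,L]$ I split each sum according to whether $\lambda_1+\lambda_2+\lambda_3+\lambda_4$ (resp. $\lambda_1+\lambda_2$) vanishes.

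The structural input is that, all $\lambda_i$ having modulus $\sqrt n$, the relation $\lambda_1+\lambda_2+\lambda_3+\lambda_4=0$ forces the four vectors to split into two antipodal pairs; there are no genuinely non-degenerate diagonal solutions. On the diagonal, using $a_{-\lambda}=\overline{a_\lambda}$, each antipodal pairing collapses to $|a_\mu|^2|a_\nu|^2$ and turns the attached $\langle\cdot,\dot\gamma\rangle$ factors into squares; summing over the three pairings and applying inclusion--exclusion for the coincidences $\mu=\pm\nu$ produces $3(\,\cdot\,)^2$ minus a term proportional to $\mathcal N_n^{-2}\sum_\lambda|a_\lambda|^4(\,\cdot\,)$. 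Now invoke the elementary identities $\sum_{\lambda\in\Lambda_n}|a_\lambda|^2=\mathcal N_n+\sqrt{2\mathcal N_n}\,W_1(n)$ and, via Lemma~\ref{lem:ident sum squares prod} with $|\dot\gamma(t)|=1$, $\sum_{\lambda\in\Lambda_n}|a_\lambda|^2\langle\lambda/|\lambda|,\dot\gamma(t)\rangle^2=\tfrac{\mathcal N_n}{2}+\sqrt{\mathcal N_n/2}\,W_2^t(n)$ together with $\sum_{\lambda\in\Lambda_n}|a_\lambda|^2\langle\lambda/|\lambda|,\dot\gamma(t)\rangle=0$. Substituting these, the $-6x^2$ terms and the constant term of $H_4$ cancel exactly against the lower-order pieces of the expanded squares, so the diagonal reduces to $\tfrac{6L}{\mathcal N_n}W_1(n)^2$ for the first integral, and the analogous polynomials in $W_1(n),\int_0^L W_2^t(n)\,dt,\int_0^L W_2^t(n)^2\,dt$ for the other two, plus the term proportional to $\mathcal N_n^{-2}\sum_\lambda|a_\lambda|^4(\,\cdot\,)$. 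Replacing that last term by its mean --- which is what generates the $\tfrac4{\mathcal N_n}\sum_\lambda\int_0^L\langle\lambda/|\lambda|,\dot\gamma\rangle^4\,dt$ subtraction in $Y_n^a$ and the constant $-L$ in $Z_n^a$ --- one recovers precisely $X_n^a,Y_n^a,Z_n^a$ of \paref{xyz}; as a consistency check, $\E[X_n^a]=\E[Y_n^a]=\E[Z_n^a]=0$, as it must be. This bookkeeping is lengthy but completely elementary.

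It then remains to establish \paref{smallvariance}. By construction $X_n^b$ is the sum of (i) the off-diagonal part of $\int_0^L f_n^4\,dt$, (ii) $-6$ times the off-diagonal part of $\int_0^L f_n^2\,dt$, and (iii) the centered remainder $-\tfrac{3L}{\mathcal N_n^2}\big(\sum_\lambda|a_\lambda|^4-\E\sum_\lambda|a_\lambda|^4\big)$, and $Y_n^b,Z_n^b$ decompose analogously; each piece is centered, so it suffices to bound second moments and recombine by Cauchy--Schwarz. For a fourfold off-diagonal sum, expanding the second moment and using that $\E[a_{\lambda_1}\cdots a_{\lambda_4}\overline{a_{\mu_1}\cdots a_{\mu_4}}]$ is nonzero only when the eight frequencies balance (the dominant contribution being $\{\lambda_i\}=\{\mu_j\}$ as multisets, the rest even smaller), one is left with $\ll\mathcal N_n^{-4}\sum_{\lambda_1+\lambda_2+\lambda_3+\lambda_4\ne0}\big|\int_0^L(\cdots)e^{2\pi i\langle\lambda_1+\lambda_2+\lambda_3+\lambda_4,\gamma(t)\rangle}\,dt\big|^2$; the nowhere vanishing curvature of $\Cc$ and van der Corput's lemma (cf.\ \cite[Lemma 5.2]{R-W}), applied with the bounded smooth amplitude, bound each oscillatory integral by $\|\lambda_1+\lambda_2+\lambda_3+\lambda_4\|^{-1/2}$, whence the whole quantity is $\ll\mathcal N_n^{-4}\sum_{\lambda_1+\cdots+\lambda_4\ne0}\|\lambda_1+\cdots+\lambda_4\|^{-1}=o(\mathcal N_n^{-2})$ by Lemma~\ref{remainder4}. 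The twofold off-diagonal sums are handled the same way and are even smaller, $\ll\mathcal N_n^{-2}\sum_{\lambda\ne-\lambda'}\|\lambda+\lambda'\|^{-1}\ll\mathcal N_n^{-2}\cdot\mathcal N_n^2\,n^{-1/4-\delta}=o(\mathcal N_n^{-2})$, since $\|\lambda+\lambda'\|=\|\lambda-(-\lambda')\|\gg n^{1/4+\delta}$ by $\delta$-separatedness \paref{generic} and $\mathcal N_n=O(n^{o(1)})$; and the centered $\sum_\lambda|a_\lambda|^4(\cdots)$ remainders have variance $O(\mathcal N_n^{-3})=o(\mathcal N_n^{-2})$ because the $|a_\lambda|^2$, $\lambda\in\Lambda_n^+$, are independent with finite fourth moment. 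Summing, $\Var(X_n^b)=\Var(Y_n^b)=\Var(Z_n^b)=o(\mathcal N_n^{-2})$.

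The step I expect to be the main obstacle is this last off-diagonal bound: it is precisely where the arithmetic of the lattice points enters, through Lemma~\ref{remainder4} and hence the $\delta$-separatedness hypothesis --- without that control the fourfold sum $\mathcal N_n^{-4}\sum_{\lambda_1+\cdots+\lambda_4\ne0}\|\lambda_1+\cdots+\lambda_4\|^{-1}$ is not obviously $o(\mathcal N_n^{-2})$. By contrast, the diagonal combinatorics and the coefficient-matching leading to \paref{xyz}, though tedious, are mechanical.
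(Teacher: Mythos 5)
Your proposal is correct and follows essentially the same route as the paper's own proof in Appendix~\ref{appendix4}: expand $f_n$, $\widetilde f_n'$ spectrally inside $H_2$, $H_4$, split the quadruple and double sums into the diagonal part (where, all frequencies having modulus $\sqrt n$, the vanishing of $\lambda_1-\lambda_2+\lambda_3-\lambda_4$ forces the three antipodal pairings, handled by inclusion--exclusion and rewritten via $W_1(n)$, $W_2^t(n)$ using Lemma~\ref{lem:ident sum squares prod}) and an off-diagonal part whose variance is controlled by the van der Corput bound of \cite[Lemma 5.2]{R-W} together with Lemma~\ref{remainder4}, the $\delta$-separation bound \paref{generic} and \eqref{eq:N=O(n^o(1))}, with the centered $\sum_\lambda|a_\lambda|^4$ fluctuation contributing only $O(\mathcal N_n^{-3})$. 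The diagonal bookkeeping, the identities for $\sum_\lambda|a_\lambda|^2$ and $\sum_\lambda|a_\lambda|^2\langle\lambda/|\lambda|,\dot\gamma(t)\rangle^{2}$, and the off-diagonal second-moment estimates all coincide with the paper's argument, so there is no gap to report.
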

\mnew{(Note that the random variables $X_n^a, Y_n^a, Z_n^a$ defined in \paref{xyz} are centered.)}
We then have the following result.
\begin{lemma}\label{lemmachaos4}
For a $\delta$-separated sequence $\lbrace n \rbrace\subset S$ such that $\mathcal N_n\to +\infty$, we have
\begin{equation}\label{4ab}
\begin{split}
\mathcal Z_n[4] = \mathcal Z_n^a [4] + \mathcal Z_n^b [4],
\end{split}
\end{equation}
with
\begin{equation}\label{eq_proj4}
\begin{split}
\mathcal Z_n^a[4]
= \frac{\sqrt{2n}}{24}\Big(3 X^a_n  -Y^a_n  -6Z^a_n\Big ),
\end{split}
\end{equation}
where $X^a_n, Y^a_n$ and $Z^a_n$ are as in \paref{xyz},
\begin{equation}\label{goodvar}
\Var(\mathcal Z_n^a[4])\sim \frac{n}{4\mathcal N^2_n} \Big (16A_{\mathcal C}(\mu_n)+24B_{\mathcal C}(\mu_n) -7L^2 \Big ),
\end{equation}
and
\begin{equation}\label{varbsmallo}
\Var(\mathcal Z_n^b[4]) = o\left (\frac{n}{\mathcal N_n^2} \right ).
\end{equation}
\end{lemma}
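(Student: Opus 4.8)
The plan is to read the decomposition \eqref{4ab}--\eqref{eq_proj4} off the closed form \eqref{proj4} together with Lemma \ref{H1}, and then to extract both \eqref{varbsmallo} and \eqref{goodvar} from the known properties of the pieces $X_n^a,Y_n^a,Z_n^a,X_n^b,Y_n^b,Z_n^b$. First I would insert the Hermite values $H_2(0)=-1$, $H_4(0)=3$ into \eqref{b}, \eqref{a} and simplify the scalars $\sqrt{2\pi^2 n}\,b_{2i}a_{2j}$ in \eqref{proj4}, which turns it into
\[
\mathcal Z_n[4]=\frac{\sqrt{2n}}{24}\Big(3\!\int_0^L H_4(f_n)\,dt-6\!\int_0^L H_2(f_n)H_2(\widetilde f_n')\,dt-\int_0^L H_4(\widetilde f_n')\,dt\Big).
\]
Feeding in the three identities of Lemma \ref{H1} and grouping the diagonal pieces into $\mathcal Z_n^a[4]:=\tfrac{\sqrt{2n}}{24}(3X_n^a-Y_n^a-6Z_n^a)$ and the remainders into $\mathcal Z_n^b[4]:=\tfrac{\sqrt{2n}}{24}(3X_n^b-Y_n^b-6Z_n^b)$ yields \eqref{4ab} and \eqref{eq_proj4}. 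Then \eqref{varbsmallo} is immediate: by Minkowski's inequality $\|\mathcal Z_n^b[4]\|_2\ll\sqrt n\,(\|X_n^b\|_2+\|Y_n^b\|_2+\|Z_n^b\|_2)$, and each summand on the right is $o(\sqrt n/\mathcal N_n)$ by \eqref{smallvariance}.

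For the asymptotics \eqref{goodvar} I would first observe that, since $\mathcal Z_n^a[4]=\mathcal Z_n[4]-\mathcal Z_n^b[4]$ with $\Var(\mathcal Z_n^b[4])=o(n/\mathcal N_n^2)$ and $\Var(\mathcal Z_n[4])=O(n/\mathcal N_n^2)$ (a crude upper bound coming from the diagram formula below together with Lemma \ref{lemma4}), Cauchy--Schwarz gives $\Var(\mathcal Z_n^a[4])=\Var(\mathcal Z_n[4])+o(n/\mathcal N_n^2)$, so it suffices to compute $\Var(\mathcal Z_n[4])$. Writing $\mathcal Z_n[4]=\tfrac{\sqrt{2n}}{24}(3I_1-6I_2-I_3)$ with $I_1=\int_0^L H_4(f_n)$, $I_2=\int_0^L H_2(f_n)H_2(\widetilde f_n')$, $I_3=\int_0^L H_4(\widetilde f_n')$, we have $\Var(\mathcal Z_n[4])=\tfrac{n}{288}\bigl(9\Var I_1+36\Var I_2+\Var I_3-36\Cov(I_1,I_2)-6\Cov(I_1,I_3)+12\Cov(I_2,I_3)\bigr)$. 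Each of these six (co)variances is, by the Wiener--Itô diagram formula applied to the jointly Gaussian family $\{f_n(t),\widetilde f_n'(t)\}_t$ (recall $\E[f_n(t)\widetilde f_n'(t)]=0$, $\E[f_n(t_1)f_n(t_2)]=r$, $\E[\widetilde f_n'(t_1)\widetilde f_n'(t_2)]=r_{12}/\alpha$, $\E[f_n(t_1)\widetilde f_n'(t_2)]=r_2/\sqrt\alpha$, $\E[\widetilde f_n'(t_1)f_n(t_2)]=r_1/\sqrt\alpha$), a fixed linear combination of double integrals of products of four of the functions $r,r_1/\sqrt\alpha,r_2/\sqrt\alpha,r_{12}/\alpha$ — precisely the eight quantities of Lemma \ref{lemma4} (and their $t_1\leftrightarrow t_2$ mirror images, equal to them by the symmetry of $r$). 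Substituting the asymptotics of Lemma \ref{lemma4}, the $F_{\mathcal C}(\mu_n)$ terms of \eqref{Fmu} — which enter only through $\Var I_2$ — cancel, and the remaining $A_{\mathcal C}(\mu_n)$-, $B_{\mathcal C}(\mu_n)$- and $L^2$-terms combine, with the weights $9,36,1,-36,-6,12$, into $\tfrac{n}{4\mathcal N_n^2}\bigl(16A_{\mathcal C}(\mu_n)+24B_{\mathcal C}(\mu_n)-7L^2\bigr)$, which is \eqref{goodvar}. As an alternative route one may instead compute $\Var X_n^a$, $\Var Y_n^a$, $\Var Z_n^a$ and their pairwise covariances directly from the independence and low-order moments of the $|a_\lambda|^2$, via \eqref{W1}--\eqref{W}; then $F_{\mathcal C}(\mu_n)$ never arises, the surviving two-pair Wick sums over $\Lambda_n^+$ evaluating (after passing to $\Lambda_n$ and using Lemma \ref{lem:ident sum squares prod} with $z=\dot\gamma(t)$) to the quantities $A_{\mathcal C}(\mu_n),B_{\mathcal C}(\mu_n),L^2$.

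The main obstacle is computational rather than conceptual: carrying out the diagram-formula bookkeeping for $\Var I_2$ (the single term genuinely mixing $r,r_1,r_2,r_{12}$), keeping the combinatorial multiplicities straight, and checking that the many $L^2$-contributions and the $F_{\mathcal C}(\mu_n)$-contributions collapse exactly to $16A_{\mathcal C}(\mu_n)+24B_{\mathcal C}(\mu_n)-7L^2$; in the alternative route the corresponding chore is verifying that every coincidence-index sum and every $\Lambda_n^+$-versus-$\Lambda_n$ discrepancy (including the possible axis point $(\sqrt n,0)$ when $\sqrt n\in\Z$) is $O(\mathcal N_n)=o(\mathcal N_n^2)$, which needs only $\mathcal N_n\to+\infty$ — the $\delta$-separatedness hypothesis is used only through \eqref{smallvariance}, i.e.\ for the $b$-parts. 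A convenient consistency check: for a static curve $4B_{\mathcal C}(\mu_n)=L^2$, so the leading term becomes $16A_{\mathcal C}(\mu_n)-L^2$, which agrees with the value produced by the approximate Kac--Rice formula in \eqref{daje}; comparing the two then also shows, a posteriori, that $\sum_{q\ge3}\Var(\mathcal Z_n[2q])=o(n/\mathcal N_n^2)$ for such curves.
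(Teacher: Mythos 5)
Your proposal is correct, and for the decomposition \eqref{4ab}, \eqref{eq_proj4} and for \eqref{varbsmallo} it coincides with the paper's argument (explicit values of $b_{2q},a_{2\ell}$ in \eqref{proj4}, then Lemma \ref{H1} and \eqref{smallvariance}). For \eqref{goodvar}, however, your primary route is genuinely different: the paper proves \eqref{goodvar} by what you call the alternative route, namely writing $3X_n^a-Y_n^a-6Z_n^a$ explicitly as a quadratic form in the variables $|a_\lambda|^2-1$ (its \eqref{expr1}) and computing the variance by Wick calculus over $\Lambda_n^+$, which produces the quantities $A_n,B_n,C_n$ of \eqref{ABC} evaluating to $16A_{\mathcal C}(\mu_n)+32B_{\mathcal C}(\mu_n)-9L^2$, $16A_{\mathcal C}(\mu_n)+16B_{\mathcal C}(\mu_n)-5L^2$ and $O(\Nc_n^{-1})$ respectively; this needs only $\Nc_n\to+\infty$, no $\delta$-separation, and never produces $F_{\mathcal C}(\mu_n)$. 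Your main route instead passes from $\Var(\mathcal Z_n^a[4])$ to $\Var(\mathcal Z_n[4])$ (legitimate, via \eqref{varbsmallo} and Cauchy--Schwarz) and evaluates the latter by the diagram formula for the jointly Gaussian pair $(f_n,\widetilde f_n')$ together with Lemma \ref{lemma4}; this reuses the moment machinery already needed for the approximate Kac--Rice part, at the (harmless) cost of invoking $\delta$-separatedness also for the $a$-part, and it does close correctly: with $I_1,I_2,I_3$ as you define them, $\Var I_2$ contributes $4\iint r^2(r_{12}/\alpha)^2+4\iint (r_1/\sqrt\alpha)^2(r_2/\sqrt\alpha)^2+16\iint r\,(r_1/\sqrt\alpha)(r_2/\sqrt\alpha)(r_{12}/\alpha)$, so by items 4), 5), 7) of Lemma \ref{lemma4} the $F_{\mathcal C}(\mu_n)$ terms cancel ($32F+32F-64F=0$) leaving $(16B_{\mathcal C}(\mu_n)+4L^2)\Nc_n^{-2}$, and the full combination $\frac{n}{288}\bigl(9\cdot 72L^2+36(16B_{\mathcal C}(\mu_n)+4L^2)+24\cdot 48A_{\mathcal C}(\mu_n)-36\cdot 24L^2-6\cdot 72L^2+12\cdot 96B_{\mathcal C}(\mu_n)\bigr)\Nc_n^{-2}$ indeed equals $\frac{n}{4\Nc_n^2}\bigl(16A_{\mathcal C}(\mu_n)+24B_{\mathcal C}(\mu_n)-7L^2\bigr)$, matching the paper's \eqref{nice}. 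One small notational point: in Lemma \ref{H1} the arguments written $f_n'$ are really the normalized derivatives $\widetilde f_n'$ (as the appendix computation shows), which is how you use them, so the coefficients $3,-6,-1$ and the prefactor $\sqrt{2n}/24$ are consistent. Your closing consistency check against \eqref{daje} is fine as a remark, but note it is only available for static curves, where the approximate Kac--Rice formula is proved.
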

\begin{proof}[Proof of Lemma \ref{lemmachaos4} assuming Lemma \ref{H1}]
We have the explicit values
\begin{equation}\label{absmall}
\begin{split}
&a_0=\sqrt{\frac{2}{\pi}},\quad a_2 = \sqrt{\frac{2}{\pi}} \frac{1}{2},\quad a_4 = \sqrt{\frac{2}{\pi}} \frac{-1}{2\cdot 2^2\cdot 3},\cr
&b_0= \frac{1}{\sqrt{2\pi}},\quad b_2 = \frac{-1}{2\sqrt{2\pi}},\quad b_4 = \frac{3}{4!\sqrt{2\pi}}
\end{split}
\end{equation}
by \paref{b} and \paref{a}.
Substituting \paref{absmall} into \paref{proj4}, we have
\begin{equation*}
\begin{split}
\mathcal Z_n[4]
&=\frac{\sqrt{2\pi^2n}}{\pi}\Big ( \frac{3}{4!}  \int_0^L H_{4}(f_n(t))\,dt -\frac14 \int_0^L H_{2}(f_n(t)) H_{2}(\widetilde f_n'(t))\,dt  -\frac{1}{4!}\int_0^L  H_{4}(\widetilde f_n'(t))\,dt    \Big )\cr
&=  \frac{\sqrt{2n}}{24}\Big(3 X^a_n  -Y^a_n  -6Z^a_n+  3X^b_n  -Y^b_n  -6Z^b_n \Big ),
\end{split}
\end{equation*}
by Lemma \ref{H1}; the latter is \paref{4ab} with $$\mathcal Z_n^b[4] := \frac{\sqrt{2n}}{24}\left( 3 X^b_n  -Y^b_n  -6Z^b_n \right ),$$
and \eqref{varbsmallo} follows from \eqref{smallvariance}.

Let us now prove \paref{goodvar}. First, observe that we can write
\begin{equation}\label{expr1}
\begin{split}
&3 X^a_n  -Y^a_n  -6Z^a_n= \frac{6}{\mathcal N_n} \Big[\frac{1}{\mathcal N_n/2}\sum_{\lambda,\lambda'\in \Lambda_n^+}(|a_\lambda|^2 -1)(|a_\lambda'|^2 -1)\times
\\&\times\int_0^L \left(3 -4 \left\langle \frac{\lambda}{|\lambda|}, \dot \gamma(t)\right\rangle^2 -4\left\langle \frac{\lambda}{|\lambda|}, \dot \gamma(t)  \right\rangle^2 \left\langle \frac{\lambda'}{|\lambda'|}, \dot \gamma(t)  \right\rangle^2   \right)\,dt\cr
&-L + 4 \frac{1}{\mathcal N_n}\sum_{\lambda\in \Lambda_n}
\int_0^L \left\langle \lambda, \dot \gamma(t)\right\rangle^4\,dt \Big].
\end{split}
\end{equation}
Equality \paref{expr1} and some straightforward computations yield
\begin{equation}\label{var1}
\Var(3 X^a_n  -Y^a_n  -6Z^a_n) = \frac{36}{\mathcal N_n^2} (A_n + B_n + 24 C_n),
\end{equation}
where
\begin{equation}\label{ABC}
\begin{split}
A_n &=  \frac{1}{\mathcal N_n^2/4}\sum_{\lambda,\lambda'\in \Lambda_n^+}
\left | \int_0^L\left(3 -4
\left\langle \frac{\lambda}{|\lambda|}, \dot \gamma(t)\right\rangle^2 -4\langle \frac{\lambda}{|\lambda|}, \dot \gamma(t)  \rangle^2
\left\langle \frac{\lambda'}{|\lambda'|}, \dot \gamma(t)  \right\rangle^2   \right)\,dt \right|^2,\cr
B_n &= \frac{1}{\mathcal N_n^2/4}\sum_{\lambda,\lambda'\in \Lambda_n^+}\int_0^L\left(3 -4 \left\langle \frac{\lambda}{|\lambda|}, \dot \gamma(t)\right\rangle^2 -4\left\langle \frac{\lambda}{|\lambda|}, \dot \gamma(t)  \right\rangle^2
\left\langle \frac{\lambda'}{|\lambda'|}, \dot \gamma(t)  \right\rangle^2   \right)\,dt\cr
&\times \int_0^L \left(3 -4 \left\langle \frac{\lambda'}{|\lambda'|}, \dot \gamma(s)\right\rangle^2 -
4\left\langle \frac{\lambda'}{|\lambda'|}, \dot \gamma(s)  \right\rangle^2
\left\langle \frac{\lambda}{|\lambda|}, \dot \gamma(s)  \right\rangle^2   \right)\,ds, \cr
C_n &= \frac{1}{\mathcal N_n^2/4}\sum_{\lambda\in \Lambda_n^+}
\left| \int_0^L \left(3 -4 \left\langle \frac{\lambda}{|\lambda|}, \dot \gamma(t)\right\rangle^2 -4\left\langle \frac{\lambda}{|\lambda|}, \dot \gamma(t)  \right\rangle^4  \right)\,dt\right|^2.
\end{split}
\end{equation}

Now, squaring out the respective terms on the \new{r.h.s.} of \paref{ABC}, with some straightforward computations we obtain
\begin{equation}\label{A2}
\begin{split}
A_n = -9L^2 +32 B_{\mathcal C}(\mu_n)   +16A_{\mathcal C}(\mu_n),
\end{split}
\end{equation}
\begin{equation}\label{B2}
\begin{split}
B_n = -5L^2 +16B_{\mathcal C}(\mu_n)+16A_{\mathcal C}(\mu_n),
\end{split}
\end{equation}
and that
\begin{equation}\label{C2}
\begin{split}
C_n = O\left( \frac{1}{\mathcal N_n}  \right).
\end{split}
\end{equation}
Substituting \paref{A2}, \paref{B2} and \paref{C2} into \paref{var1} we obtain
\begin{equation}\label{nice}
\begin{split}
\Var(3 X^a_n  -Y^a_n  -6Z^a_n) =\frac{36}{\mathcal N_n^2}(-14L^2 +48 B_{\mathcal C}(\mu_n) + 32 A_{\mathcal C}(\mu_n)  + o(1)),
\end{split}
\end{equation}
which, in turn, yields \eqref{goodvar}, bearing in mind \eqref{eq_proj4} with \eqref{nice}.

\end{proof}

\begin{proof}[Proof of Proposition \ref{prop-var4}]

First, \paref{var_chaos2} is the statement Lemma \ref{var2static}, and the leading term $16A_{\mathcal C}(\mu_n) - L^2$ was shown to be bounded away from zero as part of Proposition \ref{kac-rice}.

We now turn to proving \eqref{var_chaos4}.
Since the curve is assumed to be static, we have
$
4B_{\mathcal C}(\mu_n) = L^2
$. From \paref{goodvar} we have
\begin{equation*}
\begin{split}
\Var(\mathcal Z_n[4])
=\frac{n}{4\mathcal N^2_n} \left(
16A_{\mathcal C}(\mu_n)  -L^2\right ) + o\left( \frac{n}{\mathcal N_n^2}  \right),
\end{split}
\end{equation*}
which is \paref{var_chaos4}.

\end{proof}

\subsection{Proof of Proposition \ref{4chaos}}\label{subsec4}

\subsubsection{Auxiliary results}

Lemma \ref{lemmachaos4} implies that for $\delta$-separated sequences $\lbrace n\rbrace$ such that $\mathcal N_n\to +\infty$,
\begin{equation}
\frac{\mathcal Z_n[4]}{\sqrt{\Var(\mathcal Z_n[4])}} = \frac{\mathcal Z^a_n[4]}{\sqrt{\Var(\mathcal Z^a_n[4])}} + o_{\mathbb P}(1),
\end{equation}
where $\mathcal Z^a_n[4]$ is defined in \paref{eq_proj4} and $o_{\mathbb P}(1)$ denotes a sequence of random variables converging to $0$ in probability. We may then infer results on the limit distribution of $\Zc_{n}[4]$ from the corresponding results on $\Zc_n^a[4]$ for static curves. 

\begin{lemma}\label{lemmaLC}
If $4B_{\mathcal C}(\mu_n)-L^2 =0$, then for $W_1(n)$ and $W_2(n)$ defined in \paref{W1} and \paref{W} respectively we have
the identity
\begin{equation}\label{LC}
 \frac{1}{L} \int_0^L W_2^t(n)\,dt = W_1(n).
\end{equation}
\end{lemma}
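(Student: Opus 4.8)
The plan is to reduce the hypothesis $4B_{\mathcal C}(\mu_n)-L^2=0$ to a \emph{pointwise} statement about the individual lattice directions $\lambda/|\lambda|$, $\lambda\in\Lambda_n$, and then substitute it directly into the definitions \eqref{W1} and \eqref{W}. Recall from \eqref{eq:B terms enrgy} that, writing $E(\gamma;\theta):=\int_0^L\langle\theta,\dot\gamma(t)\rangle^2\,dt$, one has $B_{\mathcal C}(\mu_n)=\int_{\Sc^1}E(\gamma;\theta)^2\,d\mu_n(\theta)$, and that the energy constraint \eqref{eq:enrgy constr}, namely $E(\gamma;\theta)+E(\gamma;\theta^\perp)=L$, holds simply because $\langle\theta,\dot\gamma(t)\rangle^2+\langle\theta^\perp,\dot\gamma(t)\rangle^2=|\dot\gamma(t)|^2=1$ for all $t$.

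First I would note that, since $\Lambda_n$ is invariant under multiplication by $i$, the measure $\mu_n$ is invariant under rotation by $\frac\pi2$; combined with \eqref{eq:enrgy constr} this gives $\int_{\Sc^1}E(\gamma;\theta)\,d\mu_n(\theta)=\tfrac12\int_{\Sc^1}\bigl(E(\gamma;\theta)+E(\gamma;\theta^\perp)\bigr)\,d\mu_n(\theta)=\tfrac L2$. Expanding the square and using this identity yields
\[
B_{\mathcal C}(\mu_n)=\int_{\Sc^1}\Bigl(E(\gamma;\theta)-\tfrac L2\Bigr)^2\,d\mu_n(\theta)+L\int_{\Sc^1}E(\gamma;\theta)\,d\mu_n(\theta)-\tfrac{L^2}{4}=\int_{\Sc^1}\Bigl(E(\gamma;\theta)-\tfrac L2\Bigr)^2\,d\mu_n(\theta)+\tfrac{L^2}{4}.
\]
Hence the assumption $4B_{\mathcal C}(\mu_n)=L^2$ forces $\int_{\Sc^1}\bigl(E(\gamma;\theta)-\tfrac L2\bigr)^2\,d\mu_n(\theta)=0$, and since $\mu_n$ is the atomic measure supported on $\{\lambda/\sqrt n:\lambda\in\Lambda_n\}$ (with $|\lambda|=\sqrt n$), this means $\int_0^L\langle\lambda/|\lambda|,\dot\gamma(t)\rangle^2\,dt=L/2$ for \emph{every} $\lambda\in\Lambda_n$.

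With this in hand the conclusion is immediate: integrating \eqref{W} over $t\in[0,L]$ term by term and using the last identity (valid in particular for each $\lambda\in\Lambda_n^+$),
\[
\frac1L\int_0^L W_2^t(n)\,dt=\frac{1}{\sqrt{\mathcal N_n/2}}\sum_{\lambda\in\Lambda_n^+}(|a_\lambda|^2-1)\cdot\frac2L\int_0^L\left\langle\frac{\lambda}{|\lambda|},\dot\gamma(t)\right\rangle^2\,dt=\frac{1}{\sqrt{\mathcal N_n/2}}\sum_{\lambda\in\Lambda_n^+}(|a_\lambda|^2-1)=W_1(n),
\]
which is exactly \eqref{LC}. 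I do not expect any substantial obstacle here; the only step deserving care is the completing-the-square argument, which is what upgrades the ``averaged'' identity $4B_{\mathcal C}(\mu_n)=L^2$ into the pointwise identity $E(\gamma;\lambda/|\lambda|)=L/2$ for each $\lambda\in\Lambda_n$ — essentially the per-$\mu_n$ analogue of the static-curve characterization in \cite[Corollary 7.2]{R-W}.
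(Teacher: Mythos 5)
Your proof is correct, but it follows a genuinely different route from the paper's. The paper argues probabilistically: it computes the covariance matrix of the pair $\bigl(W_1(n),\int_0^L W_2^t(n)\,dt\bigr)$, which is $\begin{pmatrix}1 & L\\ L & 4B_{\mathcal C}(\mu_n)\end{pmatrix}$, notes that its determinant $4B_{\mathcal C}(\mu_n)-L^2$ vanishes by hypothesis, and concludes from the resulting perfect correlation that $\int_0^L W_2^t(n)\,dt$ is a.s. the multiple $L\,W_1(n)$ of $W_1(n)$. You instead upgrade the hypothesis to the \emph{deterministic, pointwise} identity $\int_0^L\langle\lambda/|\lambda|,\dot\gamma(t)\rangle^2\,dt=L/2$ for every $\lambda\in\Lambda_n$, by completing the square in $B_{\mathcal C}(\mu_n)=\int_{\Sc^1}E(\gamma;\theta)^2\,d\mu_n(\theta)$ using the $\frac{\pi}{2}$-invariance of $\mu_n$ (so that $\int E\,d\mu_n=L/2$) and the fact that $\mu_n$ is atomic with positive mass at each $\lambda/\sqrt n$; termwise substitution then gives \eqref{LC} as an identity of coefficients, not merely an a.s. equality. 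Your route is close in spirit to the remark the paper places after the lemma, but that remark invokes the full static-curve assumption through \cite[Corollary 7.2]{R-W} (via \eqref{zero}), whereas your equality-case argument uses only the stated hypothesis $4B_{\mathcal C}(\mu_n)=L^2$ for the single measure $\mu_n$, which matches the lemma exactly and is in this respect slightly sharper. A small additional benefit: the paper's covariance argument labels the vector ``Gaussian'' even though $W_1(n)$ and $\int_0^L W_2^t(n)\,dt$ are quadratic functionals of the $a_\lambda$ (the argument only needs correlation one, so it still stands), while your computation avoids any distributional considerations altogether.
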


\begin{proof}
The covariance matrix $\Sigma(n)$ of the Gaussian random vector $\left(W_1(n), \int_0^L W_2^t(n)\,dt \right)$
$$
\Sigma(n) = \begin{pmatrix}
&1 &L \\
&L &4B_{\mathcal C}(\mu_n)  \end{pmatrix}
$$
satisfies
$$\det \Sigma(n) = 4B_{\mathcal C}(\mu_n) -L^2=0.$$ Therefore $W_{1}(n)$ is a multiple of $\int_0^L W_2^t(n)\,dt ,$ which one may evaluate as given by \paref{LC}.

\end{proof}
\mnew{\begin{remark}\rm
 Lemma \ref{lemmaLC} can be alternatively proved as follows. First note that for $\mathcal C$ a static curve, for every $\theta\in [0,2\pi]$, from \paref{zero} it holds that
\begin{equation}\label{identity_static}
\frac{2}{L}\int_0^L \cos^2(\phi(t)-\theta)\,dt = 1,
\end{equation}
where $\gamma:[0,L]\to \mathbb T$ is a unit speed parameterization of $\mathcal C$ and $\dot \gamma(t) = \e^{i\phi(t)}$ as in \paref{ang_vel}.
Keeping in mind the definition \new{\paref{W}} of $W_2$ and the definition \new{\paref{W1}} of $W_1$ we write
\begin{equation*}\label{new_chain}
\begin{split}
\frac{1}{L} \int_0^L W_2^t(n)\,dt &= \frac{1}{\sqrt{\mathcal N_n/2}} \sum_{\lambda\in \Lambda_n^+} (|a_\lambda|^2 -1 ) \frac{2}{L}\int_0^L \cos^2(\phi(t) - \theta_{\lambda})\,dt\\
& = \frac{1}{\sqrt{\mathcal N_n/2}} \sum_{\lambda\in \Lambda_n^+} (|a_\lambda|^2 -1 )  = W_1(n),
\end{split}
\end{equation*}
with $\theta_\lambda$ denoting the argument of $\lambda/|\lambda|$ viewed as a complex number ($\lambda/|\lambda| =: \e^{i\theta_\lambda})$, and where
we used
\paref{identity_static}.
\end{remark}}

\begin{lemma}\label{intermediate4}
For a static curve $\mathcal C\subset \mathbb T$ we have
\begin{equation}\label{4n}
\begin{split}
\mathcal Z^a_n[4]
&= \frac{\sqrt{2n}}{4\mathcal N_n}\bigg(-\int_0^L \left(W_2^t(n) - \frac{1}{L}\int_0^L W_2^u(n)\,du \right)^2\,dt \\&+  4\frac{1}{\mathcal N_n}\sum_{\lambda_\in \Lambda_n}\int_{0}^L  \left \langle \frac{\lambda}{|\lambda|}, \dot \gamma(t)\right \rangle^4\,dt - L\bigg ).
\end{split}
\end{equation}
\end{lemma}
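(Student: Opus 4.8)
The plan is to derive \eqref{4n} by a direct algebraic rearrangement of the formula \eqref{eq_proj4}, namely $\mathcal Z_n^a[4] = \frac{\sqrt{2n}}{24}\big(3X_n^a - Y_n^a - 6Z_n^a\big)$ with $X_n^a,Y_n^a,Z_n^a$ as in \eqref{xyz}, in which the static hypothesis enters only through Lemma \ref{lemmaLC}. Indeed, since $\mathcal C$ is static we have $4B_{\mathcal C}(\mu_n) - L^2 = 0$, so Lemma \ref{lemmaLC} applies and gives the identity $\frac1L\int_0^L W_2^t(n)\,dt = W_1(n)$; this is the only input beyond the definitions of $X_n^a,Y_n^a,Z_n^a$ and of $W_1(n),W_2^t(n)$.

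First I would substitute $\int_0^L W_2^t(n)\,dt = L\,W_1(n)$ into the definition of $Z_n^a$ in \eqref{xyz}: since then $W_1(n)\int_0^L W_2^t(n)\,dt = L\,W_1(n)^2$, we obtain $Z_n^a = \tfrac{2L}{\mathcal N_n}\big(W_1(n)^2 - 1\big)$, which has the same shape as $X_n^a = \tfrac{6L}{\mathcal N_n}\big(W_1(n)^2-1\big)$. Hence the $W_1$-terms collapse into a single multiple of $\tfrac{1}{\mathcal N_n}\big(W_1(n)^2-1\big)$, which, using Lemma \ref{lemmaLC} once more, I would rewrite in terms of $\big(\tfrac1L\int_0^L W_2^t(n)\,dt\big)^2$ and the constant $L$; the remaining term $-Y_n^a$ contributes, by \eqref{xyz}, a multiple of $\tfrac1{\mathcal N_n}\int_0^L W_2^t(n)^2\,dt$ together with the quartic sum $\tfrac1{\mathcal N_n^2}\sum_{\lambda\in\Lambda_n}\int_0^L\langle\lambda/|\lambda|,\dot\gamma(t)\rangle^4\,dt$.

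The step that reorganises everything into the claimed form is the elementary identity
\[
\int_0^L\Big(W_2^t(n) - \tfrac1L\int_0^L W_2^u(n)\,du\Big)^2 dt \;=\; \int_0^L W_2^t(n)^2\,dt - \tfrac1L\Big(\int_0^L W_2^u(n)\,du\Big)^2 ,
\]
i.e. the expansion of the ``centred'' square of $t\mapsto W_2^t(n)$ on $[0,L]$; combined with $\tfrac1L\big(\int_0^L W_2^u(n)\,du\big)^2 = L\,W_1(n)^2$ it shows that the $\int_0^L W_2^t(n)^2\,dt$-term coming from $-Y_n^a$ and the $W_1(n)^2$-term coming from $3X_n^a-6Z_n^a$ combine into exactly $-\int_0^L\big(W_2^t(n)-\tfrac1L\int_0^L W_2^u(n)\,du\big)^2\,dt$. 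Matching the surviving constant against $-L$, the quartic sum against $4\tfrac1{\mathcal N_n}\sum_\lambda\int_0^L\langle\lambda/|\lambda|,\dot\gamma(t)\rangle^4\,dt$, and simplifying $\tfrac{\sqrt{2n}}{24}\cdot\tfrac{6}{\mathcal N_n} = \tfrac{\sqrt{2n}}{4\mathcal N_n}$, then yields \eqref{4n}.

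There is no genuine obstacle here: once Lemma \ref{lemmaLC} is in hand the argument is pure bookkeeping, and the only substantive ingredient is that staticity forces the degeneracy $\det\Sigma(n) = 4B_{\mathcal C}(\mu_n)-L^2 = 0$ exploited in that lemma. The single point demanding care is to track the numerical constants in \eqref{eq_proj4} and \eqref{xyz} so that the $W_1(n)^2$-terms cancel against precisely the ``mean'' part of the completed square and the leftover constant comes out to $-L$ (rather than some other multiple of $L$).
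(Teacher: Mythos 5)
Your proposal is correct and is essentially the paper's own argument: the paper states that Lemma \ref{intermediate4} ``follows directly from \eqref{eq_proj4} and Lemma \ref{lemmaLC}'' and omits the details, which are exactly your substitution of $\int_0^L W_2^t(n)\,dt = L\,W_1(n)$ into \eqref{xyz} followed by the centred-square identity $\int_0^L\bigl(W_2^t(n)-\frac1L\int_0^L W_2^u(n)\,du\bigr)^2dt=\int_0^L W_2^t(n)^2\,dt-\frac1L\bigl(\int_0^L W_2^u(n)\,du\bigr)^2$. One remark: the bookkeeping closes only if the prefactor of $Y_n^a$ in \eqref{xyz} is read as $6/\mathcal N_n$ rather than the printed $6L/\mathcal N_n$ (an evident misprint, confirmed both by the appendix computation of $\int_0^L H_4(f_n'(t))\,dt$ and by \eqref{4n} itself), which your simplification $\frac{\sqrt{2n}}{24}\cdot\frac{6}{\mathcal N_n}=\frac{\sqrt{2n}}{4\mathcal N_n}$ implicitly uses.
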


Lemma \ref{intermediate4} follows directly from \paref{eq_proj4} and Lemma \ref{lemmaLC} and \new{its proof} is omitted here. In order to study the asymptotic distribution of the r.h.s. of \paref{4n}, we need to study the asymptotic behavior of the sequence of stochastic processes $\lbrace W_2(n)\rbrace_n$, when $\mu_n\Rightarrow \mu$. The natural candidate to be the limiting process is the centred Gaussian process $W_2(\mu)=\lbrace W_2^t(\mu) \rbrace_t$ on $[0,L]$ uniquely defined by the covariance function
\mnew{\begin{equation}\label{defW2mau}
k_\mu(s,t) := \E[W_2^s(\mu)\cdot W_2^t(\mu)]=4\int_{\mathcal S^1} \langle \theta, \dot \gamma(t)\rangle^2\langle \theta, \dot \gamma(s)\rangle^2\,d\mu(\theta),\quad s,t\in [0,L].
\end{equation}}
The kernel $k_\mu$ above is positive-definite, hence the existence of such a $W_2(\mu)$ is guaranteed by the virtue of Kolmogorov's Theorem.

\begin{proposition}\label{mainth2}
Let $\mathcal C\subset \mathbb T$ be a static curve of length $L$, and $\lbrace n\rbrace\subset S$ a $\delta$-separated sequence
such that $\mathcal N_n\to +\infty$, and $\mu_n \Rightarrow \mu$. Then
\begin{equation}\label{limbello}
\frac{\mathcal Z^a_n[4]}{\sqrt{\Var(\mathcal Z^a_n[4])}}  \mathop{\to}^{d}  \mathcal I(\mu),
\end{equation}
where
\begin{equation}\label{I}
\mathcal I(\mu) := \frac{-\int_0^L \left(W_2^t(\mu) - \frac{1}{L}\int_0^L W_2^u(\mu)\,du \right)^2\,dt + \left( 4\int_{\mathcal S^1}\int_{0}^L  \langle \theta, \dot \gamma(t)\rangle^4\,dt d\mu(\theta)- L\right) }{\sqrt{16 A_{\mathcal C}(\mu) - L^2}},
\end{equation}
and $A_{\mathcal C}(\mu)$ is as in \paref{Amu}.
\end{proposition}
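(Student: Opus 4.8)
The limiting random variable $\mathcal I(\mu)$ of \eqref{I} is a fixed continuous functional of the Gaussian process $W_2(\mu)$, so the plan is to show that the (non-Gaussian) processes $W_2(n)$ of \eqref{W} converge weakly in $C([0,L])$ to $W_2(\mu)$, and then to push this convergence through that functional. By Lemma \ref{intermediate4} we may write
\[
\mathcal Z_n^a[4] = \frac{\sqrt{2n}}{4\mathcal N_n}\bigl(-Q_n + D_n\bigr),\qquad Q_n := \int_0^L\Bigl(W_2^t(n) - \frac1L\int_0^L W_2^u(n)\,du\Bigr)^2 dt,
\]
with $D_n := 4\frac1{\mathcal N_n}\sum_{\lambda\in\Lambda_n}\int_0^L\langle\lambda/|\lambda|,\dot\gamma(t)\rangle^4\,dt - L$ deterministic. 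Since $\theta\mapsto\int_0^L\langle\theta,\dot\gamma(t)\rangle^4\,dt$ is continuous on $\Sc^1$, the assumption $\mu_n\Rightarrow\mu$ forces $D_n\to D:=4\int_{\Sc^1}\int_0^L\langle\theta,\dot\gamma(t)\rangle^4\,dt\,d\mu(\theta)-L$; similarly $A_{\mathcal C}(\mu_n)\to A_{\mathcal C}(\mu)$ by continuity of the integrand in \eqref{Amu}. Hence, once we know that $Q_n$ converges in distribution to $Q_\infty:=\int_0^L\bigl(W_2^t(\mu)-\frac1L\int_0^L W_2^u(\mu)\,du\bigr)^2 dt$, Slutsky's theorem together with the variance asymptotics of Lemma \ref{lemmachaos4} (in which $4B_{\mathcal C}(\mu_n)=L^2$, as $\mathcal C$ is static) identifies the limit of $\mathcal Z_n^a[4]/\sqrt{\Var(\mathcal Z_n^a[4])}$ as $\mathcal I(\mu)$, proving \eqref{limbello}.

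The core step is the functional convergence $W_2(n)\Rightarrow W_2(\mu)$ in $C([0,L])$, obtained in the usual two stages. \emph{Finite-dimensional distributions.} For fixed $t_1,\dots,t_m\in[0,L]$ the vector $(W_2^{t_1}(n),\dots,W_2^{t_m}(n))$ is, by \eqref{W}, a normalised sum of $|\Lambda_n^+|=\mathcal N_n/2$ i.i.d.\ centred random vectors whose entries are uniformly bounded; the multivariate Lindeberg central limit theorem therefore applies, and its covariance matrix has $(i,j)$ entry $4\int_{\Sc^1}\langle\theta,\dot\gamma(t_i)\rangle^2\langle\theta,\dot\gamma(t_j)\rangle^2\,d\mu_n(\theta)$, which converges by $\mu_n\Rightarrow\mu$ to $k_\mu(t_i,t_j)$ of \eqref{defW2mau}; so the finite-dimensional laws of $W_2(n)$ converge to those of $W_2(\mu)$. \emph{Tightness.} Since $\gamma$ is smooth on the compact interval $[0,L]$, the coefficient $2\bigl(\langle\lambda/|\lambda|,\dot\gamma(s)\rangle^2-\langle\lambda/|\lambda|,\dot\gamma(t)\rangle^2\bigr)$ is $O(|s-t|)$ uniformly in $\lambda$ and $n$; a standard fourth-moment bound for sums of independent centred random variables ($|a_\lambda|^2$ has finite fourth moment) then gives $\E[(W_2^s(n)-W_2^t(n))^4]\le C|s-t|^2$ uniformly in $n$, which, together with tightness of $\{W_2^0(n)\}_n$ (bounded in $L^2$), yields tightness of $\{W_2(n)\}_n$ in $C([0,L])$ via Kolmogorov's criterion.

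Combining the two stages gives $W_2(n)\Rightarrow W_2(\mu)$ in $C([0,L])$. The functional $\Phi\colon C([0,L])\to\R$, $\Phi(w):=\int_0^L\bigl(w(t)-\frac1L\int_0^L w(u)\,du\bigr)^2\,dt$, is continuous (indeed locally Lipschitz) for the uniform norm, so the continuous mapping theorem yields $Q_n=\Phi(W_2(n))\Rightarrow\Phi(W_2(\mu))=Q_\infty$, and the reduction of the first paragraph then completes the proof.

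The main obstacle is exactly this upgrade from finite-dimensional to functional convergence, i.e.\ the uniform tightness estimate, needed because $\Phi$ is nonlinear in $w$. One may bypass tightness altogether by expanding $Q_n$ directly: for static $\mathcal C$ one has $\frac1L\int_0^L\langle\theta,\dot\gamma\rangle^2=\frac12$, whence $Q_n=\frac2{\mathcal N_n}\sum_{\lambda,\lambda'\in\Lambda_n^+}(|a_\lambda|^2-1)(|a_{\lambda'}|^2-1)\bigl(4B(\lambda/|\lambda|,\lambda'/|\lambda'|)-L\bigr)$, where $B(\theta,\theta')=\int_0^L\langle\theta,\dot\gamma(t)\rangle^2\langle\theta',\dot\gamma(t)\rangle^2\,dt$; the kernel $(\theta,\theta')\mapsto 4B(\theta,\theta')-L$ has finite rank, stemming from $t\mapsto\langle\theta,\dot\gamma(t)\rangle^2$ lying in the three-dimensional span of $1,\cos 2\phi(t),\sin 2\phi(t)$, so the limit reduces to a fixed quadratic polynomial in the finitely many asymptotically Gaussian linear statistics $(\mathcal N_n/2)^{-1/2}\sum_{\lambda\in\Lambda_n^+}(|a_\lambda|^2-1)\phi_k(\lambda/|\lambda|)$ — the mechanism underlying the explicit form of $\mathcal M(\mu)$ in \eqref{M}, to which $\mathcal I(\mu)$ is eventually shown to be equal in distribution.
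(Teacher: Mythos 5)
Your proposal is correct and follows essentially the same route as the paper: reduce via Lemma \ref{intermediate4} and \eqref{goodvar} (with $4B_{\mathcal C}(\mu_n)=L^2$ for static $\Cc$) to the weak convergence of the processes $W_2(n)$ to $W_2(\mu)$, establish convergence of finite-dimensional distributions by a Lindeberg-type CLT with covariances converging to $k_\mu$, and conclude by tightness and the continuous mapping/Slutsky argument --- the only difference being that you spell out the Kolmogorov fourth-moment tightness bound where the paper simply invokes ``a standard application of Prokhorov's Theorem''. (Your closing finite-rank remark is essentially the mechanism the paper uses later, in the proof of Proposition \ref{4chaos}, to identify the limit with $\mathcal M(\mu)$; the description of the summands as ``i.i.d.\ \ldots uniformly bounded'' is slightly inaccurate --- they are independent but not identically distributed, and $|a_\lambda|^2-1$ is unbounded --- but this does not affect the Lindeberg argument.)
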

\begin{proof}
Thanks to Lemma \ref{intermediate4} and \paref{goodvar} for static curves, it suffices to prove that the stochastic processes
$ W_2(n)$ weakly converge to
$W_2(\mu)$. \mnew{Lindeberg's criterion (see \cite[Remark 11.1.2]{N-P}) \new{again}, and \new{\cite[Theorem 6.2.3]{N-P}}} ensure that the finite dimensional distributions of $W_2(n)$ converge to those of $W_2(\mu)$, so that a standard application of Prokhorov's Theorem (see e.g. \cite{Du}) allows to conclude the proof.

\end{proof}

\subsubsection{Proof of Proposition \ref{4chaos}}

Before giving a proof for Proposition \ref{4chaos}, we need to introduce some more notation.
Let us think of a probability measure $\mu$ on $\mathcal S^1$ as a probability measure on $[0,2\pi]$.  There exists a centered Gaussian process $\widetilde B=\widetilde B(\mu)$ indexed by $[0,2\pi]$ such that
\begin{equation}\label{kernelB}
\Cov\left ( \int_0^{2\pi} 1_A(a)\,d\widetilde B_a , \int_0^{2\pi}  1_B(a)\,d\widetilde B_a\right ) =  \mu(A\cap B),
\end{equation}
for any $A,B\in \mathcal B([0,2\pi])$ -- the Borel $\sigma$-field on the interval $[0,2\pi]$, $1_E$ denoting the indicator function of the set $E\in \mathcal B([0,2\pi])$.

Let us also introduce the following three centred, jointly Gaussian, random variables
\begin{equation}\label{defN}
\begin{split}
N_1 :=\int_0^{2\pi} (\cos a)^2\,d\widetilde B_a,\quad
N_2 := \int_0^{2\pi}  (\sin a)^2\,d\widetilde B_a,\quad
N_3 :=  \int_0^{2\pi} \cos a \cdot \sin a\,d\widetilde B_a,
\end{split}
\end{equation}
defined as stochastic integrals on $[0,2\pi]$ with respect to $\widetilde B$.

\begin{proof}[Proof of Proposition \ref{4chaos}]
First, let us denote by $G(d\mu)$ a Gaussian measure on $\mathcal S^1$ with control $\mu$ (see e.g. \cite{N-P}), i.e. a centered Gaussian family
$$
G= \lbrace G(A): A\in \mathcal B(\mathcal S^1)\rbrace
$$
such that
$$
\E[G(A)\cdot G(B)]=\mu(A\cap B).
$$
We have the following equality in law of stochastic processes
\begin{equation}\label{int}
W_2^{\new{t}} = 2\int_{\mathcal S^1} G(\mu(d\theta)) \langle \theta, \dot \gamma(\new{t} )\rangle^2,
\end{equation}
\mnew{where the l.h.s. of \paref{int} is the centered Gaussian process $W_2 = W_2(\mu)$ on $[0,L]$ defined by \paref{defW2mau}} and the r.h.s. of \paref{int} denotes the Wiener-It\^o integral on the unit circle with respect to the Gaussian measure $G(d\mu)$.

From \paref{int} we deduce that
\begin{equation}\label{int2}
W_2^t -\frac{1}{L}\int_0^L W_2^u\,du=2\int_{\mathcal S^1} G(\mu(d\theta)) \left(\langle \theta, \dot \gamma(t)\rangle^2 -\frac{1}{L}\int_0^L \langle \theta, \dot \gamma(u)\rangle^2\,du\right),
\end{equation}
again equality in law.
We parameterize the unit circle as $$[0,2\pi] \ni a\mapsto (\cos a, \sin a)\in \mathcal S^1.$$

Recalling \paref{kernelB} and \paref{defN}, we have by \paref{int2}
\begin{equation}\label{Nbello}
\begin{split}
&W_2^t -\frac{1}{L}\int_0^L W_2^u\,du\cr
&\mathop{=}^{d} 2\int_0^{2\pi} d\widetilde B_a \left((\cos a \cdot \dot \gamma_1(t) + \sin a\cdot \dot \gamma_2(t))^2 -\frac{1}{L}\int_0^L (\cos a \cdot \dot \gamma_1(u) + \sin a\cdot \dot \gamma_2(u))^2\,du\right)\cr
&=2\int_0^{2\pi} d\widetilde B_a (\cos a)^2 \left(\dot \gamma_1(t)^2  -\frac{1}{L}\int_0^L  \dot \gamma_1(u)^2\,du \right)\cr
& + 2\int_0^{2\pi} d\widetilde B_a (\sin a)^2 \left(\dot \gamma_2(t)^2  -\frac{1}{L}\int_0^L  \dot \gamma_2(u)^2\,du \right)\cr
&+4 \int_0^{2\pi} d\widetilde B_a \cos a \cdot \sin a \left(\dot \gamma_1(t)\dot \gamma_2(t)  -\frac{1}{L}\int_0^L  \dot \gamma_1(u)\dot \gamma_2(u)\,du \right)\cr
&=2N_1 f(t) - 2N_2 f(t) + 4N_3 g(t),
\end{split}
\end{equation}
where $f$ and $g$ are as in \paref{defF}.
The covariance matrix of $N:=(N_1,N_2,N_3)$ is
\begin{equation*}
\Sigma_N := \left ( \begin{matrix}
&\frac{3+\widehat{\mu}(4)}{8} & \frac{1-\widehat{\mu}(4)}{8} & 0\\
& \frac{1-\widehat{\mu}(4)}{8}&\frac{3+\widehat{\mu}(4)}{8} &0\\
&0 &0 &\frac{1-\widehat{\mu}(4)}{8}
\end{matrix}\right ).
\end{equation*}
 Note that $N_3$ is independent of $N_1$ and $N_2$.

The eigenvalues of $\Sigma_N$ are $\frac{1+\widehat{\mu}(4)}{4} , \frac12$ and $\frac{1-\widehat{\mu}(4)}{8} $, it is then immediate that
\begin{equation}\label{diagon}
\left ( \begin{matrix} &N_1\\ &N_2\\ &N_3
\end{matrix}\right ) \mathop{=}^{d} \left ( \begin{matrix} &\frac12 Z_1 + \frac{1}{\sqrt{2}}\sqrt{\frac{1+\widehat{\mu}(4)}{4}}Z_2\\ &\frac12 Z_1 - \frac{1}{\sqrt{2}}\sqrt{\frac{1+\widehat{\mu}(4)}{4}}Z_2\\ &\sqrt{\frac{1-\widehat{\mu}(4)}{8}}Z_3
\end{matrix}\right ),
\end{equation}
where $Z_1, Z_2, Z_3$ are i.i.d. standard Gaussian random variables. Substituting \paref{diagon} into \paref{Nbello}, thanks to Proposition \ref{mainth2}, we
can conclude the proof.

\end{proof}

\appendix

\mnew{\section{On Wiener-\^Ito chaotic decompositions}\label{appendix_chaos}

In this \new{section} we give some background on Wiener-\^Ito chaotic decompositions, for more details see e.g. \cite{N-P}.
The main idea of this decomposition relies on properties of Hermite polynomials $H_k, k=0, 1, 2, \dots$ \cite[\S 5.5]{Sz75}.
The latter are defined recursively as follows: $H_0 \equiv 1$, and, for $k\geq 1$,
 $$
H_{k}(t) = tH_{k-1}(t) - H'_{k-1}(t),\qquad t\in \R.
$$
For instance $H_1(t)=t, H_2(t) = t^2 - 1, H_3(t) = t^3 -3t, H_4(t) = t^4 - 6t^2 + 3.$
The normalized sequence of Hermite polynomials
\begin{equation}\label{H}
\{H_k/\sqrt{k!}, k\ge 0\} =: \mathbb{H}
\end{equation}
 is a complete orthonormal basis for the space $L^2(\mathbb{R}, \mathscr{B}(\mathbb{R}), \varphi(t)\,dt) =: L^2(\varphi)$
of square integrable functions on the real line w.r.t. the Gaussian standard density $\varphi(t) := e^{-t^2/2}/\sqrt{2\pi}$.

The arithmetic random waves $T_n$ defined in \S \ref{sec:arith rand wav}, and consequently their restriction to the curve $\mathcal C\subset \mathbb T$, i.e. the random process $f_n := T_n \circ \gamma$ given in \paref{process}, are a by-product of a family of complex Gaussian random variables $\{a_\lambda : \lambda\in \mathbb{Z}^2\}$ -- defined on some probability space $(\Omega, \mathscr{F}, \mathbb{P})$ -- such that
\begin{enumerate}
\item $a_\lambda = x_\lambda+iy_\lambda$, where $x_\lambda$ and $y_\lambda$ are two independent (real) Gaussian random variables with mean zero and variance $1/2$;
\item $a_\lambda$ and $a_{\lambda'}$ are  independent whenever $\lambda \notin\{ \lambda', -\lambda'\}$;
\item $a_\lambda = \overline{a_{-\lambda}}.$
\end{enumerate}
 Let us now define the space ${\bf A}$ to be the closure in $L^2(\mathbb{P})$ of all real finite linear combinations of random variables $\xi$ of the form $\xi = z \, a_\lambda + \overline{z} \, a_{-\lambda},$ where $\lambda\in \mathbb Z^2$ and $z\in \mathbb C$.
The space ${\bf A}$ is a real centered Gaussian Hilbert subspace
 $\subseteq L^2(\mathbb{P})$.

We are now in a position to recall the definition of Wiener chaoses. Let $q\ge 0$ be an integer, the $q$-th Wiener chaos $C_q$ associated with ${\bf A}$ is the closure (in $L^2(\mathbb{P})$) of all real finite linear combinations of random variables of the form
$$
H_{p_1}(\xi_1)\cdot H_{p_2}(\xi_2)\cdots H_{p_k}(\xi_k)
$$
for $k\ge 1$, where $p_1,...,p_k \geq 0$ are integers satisfying $$p_1+\cdots+p_k = q,$$
 and $(\xi_1,...,\xi_k)$ is a standard real Gaussian vector extracted
from ${\bf A}$ (in particular $C_0 = \mathbb{R}$ since $H_0\equiv 1$).

Taking advantage of the orthonormality and completeness properties of $\mathbb{H}$  \paref{H} in $L^2(\varphi)$, together with a standard monotone class argument (see e.g. \cite[Theorem 2.2.4]{N-P}), we can show that $$C_q \,\bot\, C_m, \qquad q\neq m,$$
where the orthogonality holds in the sense of $L^2(\mathbb{P})$, and
\begin{equation}\label{wiener}
L^2(\Omega, \sigma({\bf A}), \mathbb{P}) = \bigoplus_{q=0}^\infty C_q.
\end{equation}
The orthogonal decomposition of $L^2(\Omega, \sigma({\bf A}), \mathbb{P})$ in \paref{wiener} is equivalent to the following: every
\new{square-summable} real-valued functional $F$ \new{on} ${\bf A}$ can be (uniquely) represented in the form
\begin{equation}\label{e:chaos2}
F = \sum_{q=0}^\infty {\rm proj}(F \, | \, C_q)=\sum_{q=0}^\infty F[q],
\end{equation}
where $F[q]:={\rm proj}(F \, | \, C_q)$ is the orthogonal projection onto the $q$-th Wiener chaos $C_q$, and the series converges in $L^2(\mathbb{P})$. In particular, $F[0]={\rm proj}(F \, | \, C_0) = \E [F]$.

\smallskip

In this work we are interested in the decomposition \paref{e:chaos2} for $F=\mathcal Z_n$, the nodal intersection number given in \S \ref{sec:arith rand wav}, and formally represented as an explicit functional of the underlying Gaussian field $T_n$ in \paref{formal}. Keeping in mind the latter formula, it is worth noting that the random processes $f_n(t)$ and $f'_n(t) = \langle \nabla T_n(\gamma(t)), \dot \gamma(t)\rangle$ viewed as collections of random variables indexed by $t\in [0,L]$, are all lying in ${\bf A}$.
We decide to not give all the technical details to find the chaotic expansion of $\mathcal Z_n$.
\new{For a more complete discussion on the decomposition \paref{e:chaos2}, including all the technical details omitted in this manuscript, the interested reader might refer to \cite{K-L}; a brief account is given in \S\ref{Schaos} above.}

}

\section{Computations for the $2$nd chaotic component}\label{Schaos2}

\begin{proof}[Proof of Lemma \ref{lemproj2}]
From \paref{proj} with $q=1$ we have
\begin{equation}\label{proj2}
\mathcal Z_n[2]=\sqrt{2\pi^2n} \left(  b_2a_0 \int_0^L H_2(f_n(t))\,dt
 + b_0 a_2 \int_0^L H_2(\widetilde f_n'(t))\,dt\right).
\end{equation}
We evaluate the first summand in the r.h.s. of \paref{proj2} to be
\begin{equation}\label{1sum}
\begin{split}
\int_0^L H_2(f_n(t))\,dt &= \int_0^L (f_n(t)^2 -1)\,dt= \int_0^L (T_n(\gamma(t))^2 -1)\,dt\cr
&=\frac{1}{\mathcal N_n} \sum_{\lambda, \lambda'\in \Lambda_n}
a_\lambda \overline{a_{\lambda'}} \int_0^L \e^{i2\pi\langle \lambda -\lambda', \gamma(t)\rangle}\,dt - L.
\end{split}
\end{equation}
Now we are left to simplify the second summand in the right-hand side of \paref{proj2}. We get
\begin{equation}\label{2sum}
\begin{split}
\int_0^L H_2(\widetilde f_n'(t))\,dt &= \int_0^L \left (\left (\widetilde f_n'(t)\right )^2 -1 \right )\,dt=\frac{1}{2\pi^2n}\int_0^L \langle\nabla T_n(\gamma(t)), \dot \gamma(t)\rangle ^2\,dt - L\cr
&=\frac{1}{2\pi^2n}\int_0^L \frac{4\pi^2}{\mathcal N_n}\sum_{\lambda,\lambda'\in \Lambda_n} a_\lambda \overline{a_{\lambda'}}
\langle \lambda, \dot \gamma(t)\rangle\langle \lambda', \dot \gamma(t)\rangle\e^{i2\pi\langle \lambda -\lambda', \gamma(t)\rangle}\,dt - L\cr
&=2\frac{1}{\mathcal N_n}\sum_{\lambda,\lambda'\in \Lambda_n} a_\lambda \overline{a_{\lambda'}}\int_0^L
\left \langle \frac{\lambda}{|\lambda|}, \dot \gamma(t)\right \rangle \left \langle \frac{\lambda'}{|\lambda|}, \dot \gamma(t)\right \rangle\e^{i2\pi\langle \lambda -\lambda', \gamma(t)\rangle}\,dt - L.
\end{split}
\end{equation}
Using \paref{1sum} and \paref{2sum} in \paref{proj2} we obtain, taking into account that
$$b_2a_0=-1/(2\pi)=-b_0a_2,$$
\begin{equation}\label{eq1c}
\begin{split}
\mathcal Z_n[2]&=\sqrt{2\pi^2n} \left(  b_2a_0 \int_0^L H_2(f_n(t))\,dt
 + b_0 a_2 \int_0^L H_2(\widetilde f_n'(t))\,dt\right)\cr
&=\frac{\sqrt{2\pi^2n}}{2\pi} \left( - \frac{1}{\mathcal N_n} \sum_{\lambda, \lambda'\in \Lambda_n}
a_\lambda \overline{a_{\lambda'}} \int_0^L \e^{i2\pi\langle \lambda -\lambda', \gamma(t)\rangle}\,dt + L \right ) \cr
&+\frac{\sqrt{2\pi^2n}}{2\pi} \left(2\frac{1}{\mathcal N_n}\sum_{\lambda,\lambda'\in \Lambda_n} a_\lambda \overline{a_{\lambda'}}\int_0^L
\left \langle \frac{\lambda}{|\lambda|}, \dot \gamma(t)\right \rangle \left \langle \frac{\lambda'}{|\lambda|}, \dot \gamma(t)\right \rangle\e^{i2\pi\langle \lambda -\lambda', \gamma(t)\rangle}\,dt - L
\right )\cr
&=\frac{\sqrt{2\pi^2n}}{2\pi} \left( - L\frac{1}{\mathcal N_n} \sum_{\lambda\in \Lambda_n}
|a_\lambda|^2  + L +2\frac{1}{\mathcal N_n}\sum_{\lambda\in \Lambda_n} |a_\lambda|^2 \int_0^L
\left \langle \frac{\lambda}{|\lambda|}, \dot \gamma(t)\right \rangle^2\,dt - L
\right ) \cr
&+\frac{\sqrt{2\pi^2n}}{2\pi} \frac{1}{\mathcal N_n} \sum_{\lambda\ne \lambda'}
a_\lambda \overline{a_{\lambda'}} \int_0^L \left(2\left \langle \frac{\lambda}{|\lambda|}, \dot \gamma(t)\right \rangle \left \langle \frac{\lambda'}{|\lambda|}, \dot \gamma(t)\right \rangle   -1 \right)\e^{i2\pi\langle \lambda -\lambda', \gamma(t)\rangle}\,dt.
\end{split}
\end{equation}
Now, thanks to \new{Lemma \ref{lem:ident sum squares prod}}, we can write \paref{eq1c} as
\begin{equation*}
\begin{split}
\mathcal Z_n[2] &= \frac{\sqrt{2\pi^2n}}{2\pi}  \frac{1}{\mathcal N_n} \sum_{\lambda\in \Lambda_n}
(|a_\lambda|^2 -1) \left ( 2 \int_0^L
\left \langle \frac{\lambda}{|\lambda|}, \dot \gamma(t)\right \rangle^2\,dt  -L \right)\cr
&+\frac{\sqrt{2\pi^2n}}{2\pi} \frac{1}{\mathcal N_n} \sum_{\lambda\ne \lambda'}
a_\lambda \overline{a_{\lambda'}} \int_0^L \left(2\left \langle \frac{\lambda}{|\lambda|}, \dot \gamma(t)\right \rangle \left \langle \frac{\lambda'}{|\lambda|}, \dot \gamma(t)\right \rangle   -1 \right)\e^{i2\pi\langle \lambda -\lambda', \gamma(t)\rangle}\,dt,
\end{split}
\end{equation*}
which equals \paref{proj2formula}.

\end{proof}

\section{Auxiliary results for the approximate Kac-Rice formula}

\subsection{Contribution of singular squares}\label{Ssing}

In this section we prove Lemma \ref{lemmasing}, following \cite[\S 4]{R-W}. We first need the following result, whose proof is similar to the proof of Lemma 2.4 in \cite{R-W-Y} and hence omitted.
\begin{lemma}\label{lemmasmall}
There exists a constant $c_0>0$ sufficiently small such that for every $t_1,t_2\in [0,L]$ with
$$0<|t_1-t_2|< c_0/\sqrt{E_n},$$ we have
$$
r(t_1,t_2) \ne \pm 1.
$$
\end{lemma}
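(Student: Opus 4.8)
The plan is to argue directly from the Fourier-type formula \eqref{covf}, exploiting that near the diagonal each phase $\langle\lambda,\gamma(t_1)-\gamma(t_2)\rangle$ is forced to be tiny. I would fix a lift of $\gamma$ to $\R^{2}$ (legitimate inside the inner products, since $\lambda\in\Z^{2}$) and record the elementary identities
\begin{equation*}
1-r(t_1,t_2)=\frac{2}{\mathcal N_n}\sum_{\lambda\in\Lambda_n}\sin^2\!\big(\pi\langle\lambda,\gamma(t_1)-\gamma(t_2)\rangle\big),\qquad 1+r(t_1,t_2)=\frac{2}{\mathcal N_n}\sum_{\lambda\in\Lambda_n}\cos^2\!\big(\pi\langle\lambda,\gamma(t_1)-\gamma(t_2)\rangle\big),
\end{equation*}
each being a sum of non-negative terms. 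Hence $r(t_1,t_2)=1$ would force $\langle\lambda,\gamma(t_1)-\gamma(t_2)\rangle\in\Z$ for every $\lambda\in\Lambda_n$, while $r(t_1,t_2)=-1$ would force $\langle\lambda,\gamma(t_1)-\gamma(t_2)\rangle\in\Z+\tfrac12$ for every $\lambda\in\Lambda_n$.

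Next I would bound these phases. Since $\gamma$ is parametrised by arc length, $|\gamma(t_1)-\gamma(t_2)|\le|t_1-t_2|$ in the lift, and $|\lambda|=\sqrt n=\sqrt{E_n}/(2\pi)$ for $\lambda\in\Lambda_n$, so for $0<|t_1-t_2|<c_0/\sqrt{E_n}$ we get $|\langle\lambda,\gamma(t_1)-\gamma(t_2)\rangle|\le\sqrt n\,|t_1-t_2|<c_0/(2\pi)$ for all $\lambda\in\Lambda_n$. Choosing $c_0<\pi$ makes this bound $<\tfrac12$, which is incompatible with $\langle\lambda,\gamma(t_1)-\gamma(t_2)\rangle\in\Z+\tfrac12$; thus $r(t_1,t_2)\ne-1$ already. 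It is also incompatible with the phase being a nonzero integer, so if $r(t_1,t_2)=1$ then $\langle\lambda,\gamma(t_1)-\gamma(t_2)\rangle=0$ for every $\lambda\in\Lambda_n$. Since $\Lambda_n\ne\emptyset$ and is invariant under rotation by $\tfrac\pi2$, it contains two linearly independent vectors (any $\lambda$ and its rotation by $\tfrac\pi2$), whence $\gamma(t_1)-\gamma(t_2)=0$ in the lift, i.e. $\gamma(t_1)=\gamma(t_2)$ on $\mathbb T$.

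To rule the latter out I would use that $\mathcal C$ is a fixed smooth curve and $\gamma$ an immersion, $|\dot\gamma|\equiv 1$: the set $\{(t_1,t_2)\in[0,L]^2:t_1\ne t_2,\ \gamma(t_1)=\gamma(t_2)\}$ has compact closure disjoint from the diagonal (local injectivity of $\gamma$ keeps limit points off the diagonal), hence is bounded away from it by some $\varepsilon_0=\varepsilon_0(\mathcal C)>0$; therefore $0<|t_1-t_2|<\varepsilon_0$ forces $\gamma(t_1)\ne\gamma(t_2)$. Shrinking $c_0$ once more so that $c_0/(2\pi)<\varepsilon_0$ (note $c_0/\sqrt{E_n}\le c_0/(2\pi)$ since $E_n\ge 4\pi^2$) contradicts $\gamma(t_1)=\gamma(t_2)$ and completes the argument; one may take $c_0:=\min\{\pi,\,2\pi\varepsilon_0\}$ (possibly shrunk further for the later uses of the same constant). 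Alternatively, the case $r=1$ could be handled by Taylor expanding $\E[(f_n(t_1)-f_n(t_2))^2]=\int\!\int r_{12}$ and using $r_{12}(t,t)=\alpha=2\pi^2 n$ from \eqref{alpha_var} together with the bound $O(n^{3/2})$ on the third derivatives of $r_n$; but the phase argument above disposes of both signs at once and uniformly in $n$.

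I expect no genuine obstacle here; the one step requiring a little care — and the nearest thing to an obstruction — is the final geometric point, namely producing the uniform self-intersection scale $\varepsilon_0$ for the fixed curve $\mathcal C$ and checking it can be reconciled with the constraint $c_0<\pi$ imposed to kill the $\Z+\tfrac12$ possibility, which it can since both are open conditions on $c_0$ with a common feasible range.
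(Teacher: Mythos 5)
Your argument is correct, and it is in fact more self-contained than what the paper offers: the paper omits the proof of Lemma \ref{lemmasmall} altogether, referring to the analogous \cite[Lemma 2.4]{R-W-Y}, whose route is essentially the local non-degeneracy argument you mention only as an alternative at the end (Taylor expansion of $r$ about the diagonal, using $r(t,t)\equiv 1$, $r_{12}(t,t)=\alpha=2\pi^{2}n$ and $O(n^{3/2})$ bounds on third derivatives to get $1-r\gg \alpha|t_1-t_2|^{2}$ and hence $|r|<1$ for $0<|t_1-t_2|<c_0/\sqrt{E_n}$). Your main argument instead exploits the exact identities $1\mp r=\frac{2}{\mathcal N_n}\sum_{\lambda}\,(\sin \text{ resp. }\cos)^{2}\bigl(\pi\langle\lambda,\gamma(t_1)-\gamma(t_2)\rangle\bigr)$, which dispose of both values $\pm1$ at once and require no derivative estimates on $r$: the phases are $<\tfrac12$ in absolute value once $c_0<\pi$ (killing $r=-1$ and forcing all phases to vanish if $r=1$), the $\tfrac{\pi}{2}$-rotation invariance of $\Lambda_n$ supplies two linearly independent frequencies so that vanishing phases force $\gamma(t_1)=\gamma(t_2)$, and the uniform self-intersection scale $\varepsilon_0=\varepsilon_0(\Cc)$ coming from $|\dot\gamma|\equiv1$ (an immersion of a compact interval has its off-diagonal coincidence pairs bounded away from the diagonal) rules that out for $|t_1-t_2|<c_0/(2\pi)$ with $c_0\le 2\pi\varepsilon_0$. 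All steps check out, including the legitimacy of working with a lift (the phases are well defined modulo $1$ since $\lambda\in\Z^{2}$) and the relation $\sqrt n=\sqrt{E_n}/(2\pi)$; the resulting $c_0=\min\{\pi,2\pi\varepsilon_0\}$ depends only on the fixed curve, which is what the subsequent covering argument needs, and you correctly note it may have to be shrunk further there (e.g.\ to guarantee $|r|>1/4$ on singular squares via the Lipschitz bound).
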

Proposition 4.4 in \cite{R-W} also asserts that
for $t_1\in [0,L]$ and $0<|t_2 - t_1| < c_0/\sqrt{n}$ one has the uniform estimate
\begin{equation}\label{igor_unif}
K_2(t_1,t_2) = O(n).
\end{equation}
Lemma \ref{lemmasmall} and \paref{igor_unif} allows to prove the following as in the proof of \cite[Proposition 4.1]{R-W}.
\begin{lemma}\label{Ovar}
We have
$$
\Var(\mathcal Z_i) = O(1),
$$
uniformly for  $i\le k$, where the constants involved in the ``O"-notation depend only on $c_0$.
\end{lemma}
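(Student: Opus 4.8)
The plan is to bound $\Var(\mathcal Z_i)$ by the second moment $\E[\mathcal Z_i^2]$, and to compute the latter through the first two (factorial) moments of the zero count, exploiting that each interval $I_i$ has length $\delta_0$ comparable to $1/\sqrt n$, together with the uniform estimate \eqref{igor_unif} on $K_2$ at that scale. First I would record the size of $\delta_0$: since $k=\lfloor L\sqrt{E_n}/c_0\rfloor+1$ we have $\delta_0=L/k<c_0/\sqrt{E_n}$, and for $n$ large $\delta_0\asymp c_0/\sqrt{E_n}\asymp c_0/\sqrt n$; in particular any two distinct points $t_1,t_2\in I_i$ satisfy $0<|t_1-t_2|<c_0/\sqrt n$. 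By Lemma \ref{lemmasmall} the Gaussian vector $(f_n(t_1),f_n(t_2))$ is then non-degenerate for all such pairs, so the Kac-Rice formulas for the first two moments of the number of zeros of $f_n$ on $I_i$ apply, exactly as in the proof of \cite[Proposition 4.1]{R-W}:
$$
\E[\mathcal Z_i]=\int_{I_i}K_1(t)\,dt,\qquad
\E[\mathcal Z_i(\mathcal Z_i-1)]=\int_{I_i}\int_{I_i}K_2(t_1,t_2)\,dt_1\,dt_2,
$$
both integrals being finite since $K_1\equiv\sqrt{2n}$ and $K_2$ extends to a smooth (hence locally bounded) function on $[0,L]^2$.

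Next I would substitute the available bounds. From $K_1\equiv\sqrt{2n}$ one gets $\E[\mathcal Z_i]=\sqrt{2n}\,\delta_0=O(1)$ with implied constant depending only on $c_0$. For the second factorial moment, the uniform estimate \eqref{igor_unif} gives $K_2(t_1,t_2)=O(n)$ for all $t_1\neq t_2$ in $I_i$ (and, by smooth continuation of $K_2$, on the closed square $\overline{I_i\times I_i}$ as well), so that
$$
\E[\mathcal Z_i(\mathcal Z_i-1)]\ll n\,\delta_0^2\ll n\cdot\frac{c_0^2}{n}=O(1),
$$
again with constant depending only on $c_0$. Combining,
$$
\Var(\mathcal Z_i)\le\E[\mathcal Z_i^2]=\E[\mathcal Z_i(\mathcal Z_i-1)]+\E[\mathcal Z_i]=O(1),
$$
uniformly for $i\le k$, which is the assertion.

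The only point that needs care — and the reason the claim is stated "as in the proof of \cite[Proposition 4.1]{R-W}" — is the justification of the factorial-moment Kac-Rice identity on $I_i$, i.e. that $f_n$ has almost surely finitely many zeros on $I_i$ with no degenerate pairs; this is precisely what Lemma \ref{lemmasmall} supplies at the scale $\delta_0<c_0/\sqrt{E_n}$. Everything else is a direct insertion of the bounds $K_1\equiv\sqrt{2n}$ and \eqref{igor_unif}, and the uniformity in $i$ is automatic, since neither bound depends on $i$.
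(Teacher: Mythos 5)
Your proof is correct and follows essentially the same route as the paper, which simply invokes Lemma \ref{lemmasmall} and the uniform bound \eqref{igor_unif} and refers to the argument of \cite[Proposition 4.1]{R-W}: bound $\Var(\mathcal Z_i)$ by the second moment, compute the first and second factorial moments by Kac-Rice on the short interval $I_i$ (valid by the non-degeneracy from Lemma \ref{lemmasmall}), and use $K_1\equiv\sqrt{2n}$ together with $K_2=O(n)$ and $\delta_0\ll c_0/\sqrt{n}$ to get $O(1)$ bounds depending only on $c_0$, uniformly in $i$.
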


The following follows upon applying Cauchy-\new{Schwarz} with Lemma \ref{Ovar}.
\begin{corollary}\label{Ocov}
We have
$$
\Cov(\mathcal Z_i, \mathcal Z_j) = O(1),
$$
uniformly for  $i,j\le k$, where the constants involved in the ``O"-notation depend only on $c_0$.
\end{corollary}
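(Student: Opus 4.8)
The plan is to deduce the bound directly from Lemma~\ref{Ovar} via the Cauchy--Schwarz inequality for second moments, so that the corollary becomes a one-line consequence. First I would note that each $\mathcal Z_i$ lies in $L^2(\mathbb P)$: it is the number of zeros of $f_n$ in the subinterval $I_i$, a piece of the total count $\mathcal Z_n$ whose mean is finite by \eqref{igor_mean}, and its variance is $O(1)$ by Lemma~\ref{Ovar}. Hence $\Cov(\mathcal Z_i,\mathcal Z_j)$ is well defined for all $i,j\le k$.

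Next I would apply the Cauchy--Schwarz inequality in the form
$$
|\Cov(\mathcal Z_i,\mathcal Z_j)| = \bigl|\E\bigl[(\mathcal Z_i - \E[\mathcal Z_i])(\mathcal Z_j - \E[\mathcal Z_j])\bigr]\bigr| \le \sqrt{\Var(\mathcal Z_i)}\,\sqrt{\Var(\mathcal Z_j)}.
$$
Invoking Lemma~\ref{Ovar} for each of the two factors, with the uniformity in $i,j\le k$ and the dependence of the implied constant only on $c_0$ asserted there, the right-hand side is $O(1)$ with a constant depending only on $c_0$, which is exactly the claim.

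There is no genuine obstacle here: the entire quantitative content sits in Lemma~\ref{Ovar} (which in turn rests on Lemma~\ref{lemmasmall} and the uniform estimate \eqref{igor_unif}), and the corollary is immediate from it. The only point worth double-checking is that the $O(1)$ in Lemma~\ref{Ovar} is uniform over \emph{all} $i\le k$, not merely ``most'' of them, so that the product bound is uniform over all pairs $(i,j)$; this is indeed how Lemma~\ref{Ovar} is stated, so nothing further is needed.
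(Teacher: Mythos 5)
Your proposal is correct and matches the paper's argument exactly: the paper derives Corollary \ref{Ocov} precisely by applying Cauchy--Schwarz to $\Cov(\mathcal Z_i,\mathcal Z_j)$ and bounding each factor $\sqrt{\Var(\mathcal Z_i)}$ via Lemma \ref{Ovar}, whose $O(1)$ bound is uniform in $i\le k$ with constant depending only on $c_0$. Nothing further is needed.
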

Let us now denote by $B$ the union of all singular cubes
\begin{equation}\label{defBsing}
B = \bigcup_{S_{i,j}\text{ singular}} S_{i,j}.
\end{equation}
The proof of the following is similar to the proof of Lemma 4.7 in \cite{R-W}.
\begin{lemma}\label{areaB}
The total area of the singular set is, for a $\delta$-separated sequence $\lbrace n\rbrace\subset S$ such that $\mathcal N_n\to +\infty$,
$$
\text{meas}(B) = o\left( \mathcal N_n^{-2}  \right).
$$
\end{lemma}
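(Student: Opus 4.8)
The plan is to mimic the proof of \cite[Lemma 4.7]{R-W}: reduce the estimate for $\text{meas}(B)$ to an estimate for the measure of the super-level set $\{|r|>1/4\}$ of the covariance function $r=r_n$, and then bound the latter by Markov's inequality applied to a sufficiently high even moment of $r$. The only genuine difference with \cite{R-W} is that the target exponent here is one unit lower — $o(\mathcal N_n^{-2})$ rather than $o(\mathcal N_n^{-1})$ — so one uses the sixth rather than the fourth moment of $r$.

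For the reduction, recall from the paragraph preceding Lemma \ref{lemmasing} that $r/\sqrt{E_n}$ is Lipschitz with an absolute constant, and that $c_0$ was chosen small enough that whenever a square $S_{i,j}$ is singular (so that $|r|>1/2$ at some point of $S_{i,j}$) one in fact has $|r(t_1,t_2)|>1/4$ for \emph{every} $(t_1,t_2)\in S_{i,j}$. Hence every singular square — and therefore their union $B$ — is contained in $\{(t_1,t_2)\in[0,L]^2:|r(t_1,t_2)|>1/4\}$, and by Markov's inequality
\begin{equation*}
\text{meas}(B)\le\text{meas}\{|r|>1/4\}\le 4^{6}\int_0^L\!\int_0^L r(t_1,t_2)^6\,dt_1\,dt_2 .
\end{equation*}
It therefore remains to prove $\int_0^L\!\int_0^L r^6\,dt_1\,dt_2=o(\mathcal N_n^{-2})$ — in fact $O(\mathcal N_n^{-3})$ — which is the six-fold analogue of the moment estimates of Lemma \ref{lemma4} and is obtained in exactly the same way. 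Expanding each $\cos(2\pi\langle\lambda_j,\gamma(t_1)-\gamma(t_2)\rangle)$ into exponentials gives
\begin{equation*}
\int_0^L\!\int_0^L r^6\,dt_1\,dt_2=\frac{2^{-6}}{\mathcal N_n^{6}}\sum_{\lambda_1,\dots,\lambda_6\in\Lambda_n}\ \sum_{\varepsilon\in\{\pm1\}^6}\Bigl|\int_0^L e^{2\pi i\langle\varepsilon_1\lambda_1+\dots+\varepsilon_6\lambda_6,\,\gamma(t)\rangle}\,dt\Bigr|^2 .
\end{equation*}
The diagonal terms, those with $\varepsilon_1\lambda_1+\dots+\varepsilon_6\lambda_6=0$, number $O(\mathcal N_n^{3})$ (dominated by the pairings of the six vectors, as in the proof of Lemma \ref{lemma4}) and contribute $O(L^2\mathcal N_n^{3}/\mathcal N_n^{6})=O(\mathcal N_n^{-3})$. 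For the off-diagonal terms, the non-vanishing curvature of $\mathcal C$ together with van der Corput's lemma \cite[Lemma 5.2]{R-W} show that the inner integral is $O(|\varepsilon_1\lambda_1+\dots+\varepsilon_6\lambda_6|^{-1/2})$, so their total contribution is $\ll\mathcal N_n^{-6}\sum|\varepsilon_1\lambda_1+\dots+\varepsilon_6\lambda_6|^{-1}=o(\mathcal N_n^{-3})$ by the $\delta$-separatedness of $\{n\}$, via the six-vector counterpart of Lemma \ref{remainder4} (Lemma \ref{remainder6}). Combining the three displays and using $\mathcal N_n\to+\infty$ yields $\text{meas}(B)=O(\mathcal N_n^{-3})=o(\mathcal N_n^{-2})$, as required.

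The one non-routine ingredient is the off-diagonal lattice-point sum $\mathcal N_n^{-6}\sum|\varepsilon_1\lambda_1+\dots+\varepsilon_6\lambda_6|^{-1}$: bounding it by $o(\mathcal N_n^{-3})$ requires a quantitative lower bound of the shape $\|\varepsilon_1\lambda_1+\dots+\varepsilon_6\lambda_6\|\gg n^{c\delta}$ on nonzero signed six-fold sums of points of $\Lambda_n$ for $\delta$-separated energy levels — the content of Lemma \ref{remainder6} — together with the standard fact that the number of vanishing such sums is $O(\mathcal N_n^{3})$. Everything else (the Lipschitz reduction and the application of Markov's inequality) is immediate.
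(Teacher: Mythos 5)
Your reduction is exactly the one the paper uses: every singular square lies in $\{|r|>1/4\}$, so Chebyshev--Markov with the sixth moment gives $\text{meas}(B)\ll \int_0^L\int_0^L r^6\,dt_1dt_2$, and the sixth-moment bound is the content of Lemma \ref{lemma6}. The genuine gaps are in how you estimate that sixth moment. First, you claim that the number of $6$-tuples in $\Lambda_n^6$ with vanishing (signed) sum is $O(\mathcal N_n^{3})$, ``dominated by the pairings''. This is unjustified: the pairings only furnish a lower bound of order $\mathcal N_n^{3}$, and controlling the non-degenerate solutions of $\lambda_1+\dots+\lambda_6=0$ is a hard arithmetic problem; the bound actually available, and the one invoked in the proof of Lemma \ref{lemma6}, is Bombieri--Bourgain's $|S_6(n)|=O\left(\mathcal N_n^{7/2}\right)$ \cite{B-B}. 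That weaker input still yields a diagonal contribution $O\left(\mathcal N_n^{-5/2}\right)=o\left(\mathcal N_n^{-2}\right)$, so the lemma survives, but your asserted $O\left(\mathcal N_n^{-3}\right)$ does not follow.

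Second, your off-diagonal estimate $o\left(\mathcal N_n^{-3}\right)$, and your description of Lemma \ref{remainder6} as a separation bound $\|\varepsilon_1\lambda_1+\dots+\varepsilon_6\lambda_6\|\gg n^{c\delta}$ for nonzero six-fold sums, misstate what that lemma asserts and what its proof provides. The geometric argument of Lemma \ref{remainder4} gives such a uniform lower bound only for \emph{four}-fold sums; for six points no such bound is established (and it is unclear whether it holds), which is precisely why the paper instead proves the weighted estimate $\mathcal N_n^{-4}\sum_{\lambda_1+\dots+\lambda_6\ne 0}\|\lambda_1+\dots+\lambda_6\|^{-1}=o(1)$ by a two-scale argument: large sums are handled trivially, while small nonzero sums are controlled by combining the four-fold separation (showing there are only $O(1)$ admissible choices of $(\lambda_5,\lambda_6)$ for fixed $\lambda_1,\dots,\lambda_4$) with a modification of the counting result \cite[Theorem 2.2]{K-K-W} for six-fold sums equal to a fixed vector. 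Using Lemma \ref{remainder6} as actually stated, the off-diagonal contribution is $\mathcal N_n^{-6}\sum\|\cdot\|^{-1}=o\left(\mathcal N_n^{-2}\right)$, again sufficient but weaker than you claim. In short, your architecture coincides with the paper's, but the two quantitative inputs must be replaced by the ones genuinely available: the $O\left(\mathcal N_n^{7/2}\right)$ bound on $|S_6(n)|$ and Lemma \ref{remainder6} in its stated form.
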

\begin{proof}
We apply Chebyshev-Markov inequality to the measure of $B$ to get
$$
\text{meas}(B) \ll \int_0^L r(t_1,t_2)^6\,dt_1 dt_2.
$$
Hence bounding the measure of the singular set $B$ is reduced to bounding the $6$th moment and its derivatives.
An application of Lemma \ref{lemma6} below then concludes the proof of Lemma \ref{areaB}.

\end{proof}

We are now in a position to prove Lemma \ref{lemmasing}.

\begin{proof}[Proof of Lemma \ref{lemmasing}]
Since the number of singular cubes is $O\left( E_n \text{meas}(B)\right)$, Corollary \ref{Ocov} bounds the contribution of singular cubes in \paref{varsum2} as
\begin{equation}\label{sing}
\left |\sum_{i,j : S_{i,j}\text{ sing.}} \Cov(\mathcal Z_i, \mathcal Z_j)\right | = O\left  ( E_n \cdot \text{meas}(B)\right );
\end{equation}
The statement of Lemma \ref{lemmasing} then follows upon an application of Lemma \ref{areaB}.

\end{proof}

\subsection{Taylor expansion for the two-point correlation function}\label{sectiontaylork2}
\begin{proof}[Proof of Lemma \ref{taylorK2}]
Recall that $\alpha=2\pi^2n$ as in \paref{alpha_var}. Lemma 3.2 in \cite{R-W} asserts that
\begin{equation}\label{K2igor}
K_2 = \frac{1}{\pi^2(1-r^2)^{3/2}}\cdot \mu \cdot (\sqrt{1-\rho^2} + \rho \arcsin \rho),
\end{equation}
where
$$
\mu = \sqrt{\alpha (1-r^2) - r_1^2}\cdot \sqrt{\alpha (1-r^2) - r_2^2},
$$

$$
\rho = \frac{r_{12}(1-r^2) + r r_1 r_2}{ \sqrt{\alpha (1-r^2) - r_1^2}\cdot \sqrt{\alpha (1-r^2) - r_2^2}}.
$$
We now set
$$
G(\rho) := \frac{2}{\pi} \left( \sqrt{1-\rho^2} + \rho \arcsin \rho   \right),
$$
then, as $\rho \to 0$,
\begin{equation}\label{TayG}
G(\rho)= \frac{2}{\pi} \left (1+ \frac{\rho^2}{2} +\frac{\rho^4}{24} + O(\rho^6) \right ).
\end{equation}
Let us now expand $\mu$ around $0$.
\begin{equation}\label{TayMu}
\begin{split}
\mu =
\alpha \Big(&1-r^2   - \frac{(r_2/\sqrt{\alpha})^2}{2}   - \frac{(r_1/\sqrt{\alpha})^2}{2}
 - \frac{(r_2/\sqrt{\alpha})^4}{8}  \cr
&- \frac{(r_1/\sqrt{\alpha})^4}{8}  +\frac{(r_1/\sqrt{\alpha})^2(r_2/\sqrt{\alpha})^2}{4}   +O((r^2+(r_2/\sqrt{\alpha})^2)^3) \Big).
\end{split}
\end{equation}
Moreover,
\begin{equation}\label{TayFrac}
\frac{1}{(1-r^2)^{3/2}}=1 + \frac32 r^2 + \frac{15}{8}r^4 + O(r^6).
\end{equation}
Let us now Taylor expand $\rho$.
\begin{equation}\label{TayRho}
\begin{split}
\rho
=&(r_{12}/\alpha)    + (r_{12}/\alpha) \frac{(r_2/\sqrt{\alpha})^2}{2}  + (r_{12}/\alpha) \frac{(r_1/\sqrt{\alpha})^2}{2} + r (r_1/\sqrt{\alpha})( r_2/\sqrt{\alpha})\cr &+O(r_{12}(r^2+(r_2/\sqrt{\alpha})^2)^2).
\end{split}
\end{equation}
Substituting \paref{TayRho} into \paref{TayG} we obtain
\begin{equation}\label{TayG2}
\begin{split}
G(\rho) = \frac{2}{\pi}\Big(& 1+\frac{1}{2} \left((r_{12}/\alpha)^2 +(r_{12}/\alpha)^2 (r_2/\sqrt{\alpha})^2  ) +(r_{12}/\alpha)^2 (r_1/\sqrt{\alpha})^2  )  \right) \cr
&+2(r_{12}/\alpha)r (r_1/\sqrt{\alpha})( r_2/\sqrt{\alpha})+\frac{1}{24}(r_{12}/\alpha)^4   +O(r_{12}^2 (r^2+(r_2/\sqrt{\alpha})^2)^2) \Big).
\end{split}
\end{equation}
Finally, using \paref{TayFrac}, \paref{TayMu} and \paref{TayG2} in \paref{K2igor} we get
\begin{equation*}
\begin{split}
K
=\frac{\alpha}{\pi^2}\Big(& 1+\frac{1}{2} (r_{12}/\alpha)^2 +\frac12 r^2   - \frac{(r_2/\sqrt{\alpha})^2}{2}   - \frac{(r_1/\sqrt{\alpha})^2}{2} \cr
&+\frac38 r^4+\frac{1}{24}(r_{12}/\alpha)^4- \frac{(r_2/\sqrt{\alpha})^4}{8}  - \frac{(r_1/\sqrt{\alpha})^4}{8} \cr
&+(r_{12}/\alpha)r (r_1/\sqrt{\alpha})( r_2/\sqrt{\alpha})+\frac{(r_1/\sqrt{\alpha})^2(r_2/\sqrt{\alpha})^2}{4}\cr
& - \frac{3}{2}r^2\frac{(r_2/\sqrt{\alpha})^2}{2}   - \frac{3}{2}r^2\frac{(r_1/\sqrt{\alpha})^2}{2}+\frac{1}{2} (r_{12}/\alpha)^2\frac12 r^2\cr
&+\frac14 (r_2/\sqrt \alpha)^2 (r_{12}/\alpha)^2 +\frac14 (r_1/\sqrt \alpha)^2 (r_{12}/\alpha)^2\cr
&+O(r^6 +(r_1/\sqrt{\alpha})^6 +(r_2/\sqrt{\alpha})^6 +(r_{12}/\alpha)^6  )\Big),
\end{split}
\end{equation*}
which is \paref{eqTayK2}.

\end{proof}

\section{Moments of $r$ and its derivatives}\label{Smoments}

\begin{lemma}\label{lemma2}
If $\mathcal C\subset \T$ is a smooth curve with nowhere vanishing curvature, then for a $\delta$-separated sequence $\lbrace n\rbrace$ such that $\mathcal N_n\to +\infty$, we have
\begin{equation*}
\begin{split}
&1) \int_0^L \int_0^L r(t_1, t_2)^2\,dt_1 dt_2 = \frac{L^2}{\mathcal N_n} + o\left(\frac{1}{\mathcal N_n^2}\right),\cr
&2) \int_0^L \int_0^L \left| \frac{1}{\sqrt{4\pi^2 n}} r_1(t_1, t_2)\right |^2\,dt_1 dt_2 = \frac{L^2}{2\mathcal N_n} + o\left(\frac{1}{\mathcal N_n^2}\right),\cr
&3) \int_0^L \int_0^L \left | \frac{1}{4\pi^2 n} r_{12}(t_1, t_2)\right |^2\,dt_1 dt_2 = \frac{B_{\mathcal C}(\mu_n)}{\mathcal N_n} + o\left(\frac{1}{\mathcal N_n^2}\right),
\end{split}
\end{equation*}
where $B_{\mathcal C}(\mu_n)$ is given in \paref{B}.
\end{lemma}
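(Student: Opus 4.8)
The plan is to expand each of the three squared integrands into a double sum over $\lambda,\lambda'\in\Lambda_n$ and to split off a \emph{diagonal} contribution, $\lambda'=\pm\lambda$, from an \emph{off-diagonal} one, $\lambda'\ne\pm\lambda$. Starting from $r=r_n(t_1,t_2)=\frac1{\mathcal N_n}\sum_\lambda\cos(2\pi\langle\lambda,\gamma(t_1)-\gamma(t_2)\rangle)$ and differentiating in $t_1$ and/or $t_2$, one checks (using $|\lambda|^2=n$ to absorb the normalising constants $4\pi^2n$) that each integrand has the form $\frac1{\mathcal N_n^2}\sum_{\lambda,\lambda'}c_\lambda(t_1,t_2)\,c_{\lambda'}(t_1,t_2)$, where $c_\lambda$ equals $\cos$ (for $r$ and $r_{12}$) or $\sin$ (for $r_1$) of the phase $2\pi\langle\lambda,\gamma(t_1)-\gamma(t_2)\rangle$, multiplied by an amplitude equal to $1$, to $\langle\lambda/|\lambda|,\dot\gamma(t_1)\rangle$, or to $\langle\lambda/|\lambda|,\dot\gamma(t_1)\rangle\langle\lambda/|\lambda|,\dot\gamma(t_2)\rangle$ respectively; in every case this amplitude is smooth, factors as a function of $t_1$ times a function of $t_2$, and is bounded together with all its derivatives uniformly in $\lambda$ (the vectors $\lambda/|\lambda|$ being unit). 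A product-to-sum identity turns $c_\lambda c_{\lambda'}$ into a combination of two oscillatory terms with phases $2\pi\langle\lambda\mp\lambda',\gamma(t_1)-\gamma(t_2)\rangle$.

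I would treat the diagonal first. For $\lambda'=\lambda$ the phase $\lambda-\lambda'$ vanishes, so writing $\cos^2 x=\tfrac{1+\cos 2x}{2}$, $\sin^2 x=\tfrac{1-\cos 2x}{2}$ (with $x$ the relevant phase) extracts a constant contribution per index: $L^2/2$ in case 1), $\tfrac L2\int_0^L\langle\lambda/|\lambda|,\dot\gamma(t)\rangle^2dt$ in case 2), and $\tfrac12\left(\int_0^L\langle\lambda/|\lambda|,\dot\gamma(t)\rangle^2dt\right)^2$ in case 3). Dividing by $\mathcal N_n^2$, summing over $\lambda\in\Lambda_n$, and applying Lemma~\ref{lem:ident sum squares prod} with $z=\dot\gamma(t)$ — so that $\tfrac1{\mathcal N_n}\sum_\lambda\int_0^L\langle\lambda/|\lambda|,\dot\gamma(t)\rangle^2dt=\tfrac L2$ since $|\dot\gamma|\equiv1$ — together with, in case 3), the representation \eqref{eq:B terms enrgy} of $B_{\mathcal C}(\mu_n)$, produces $\tfrac{L^2}{2\mathcal N_n}$, $\tfrac{L^2}{4\mathcal N_n}$ and $\tfrac{B_{\mathcal C}(\mu_n)}{2\mathcal N_n}$ respectively. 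The case $\lambda'=-\lambda$ yields an identical contribution (the sign change cancels both in the even power of $\langle\lambda/|\lambda|,\dot\gamma\rangle$ and in the product of the two trigonometric factors), doubling each of these and so producing exactly the asserted main terms. The leftover diagonal piece carries the non-zero phase $2\pi\langle2\lambda,\gamma(t_1)-\gamma(t_2)\rangle$; since the amplitude factors, the double integral equals the squared modulus of a one-dimensional oscillatory integral, which is $\ll n^{-1/4}$ by Van der Corput's lemma (\cite[Lemma 5.2]{R-W}, using the nowhere vanishing curvature), so this piece is $\ll n^{-1/2}$ per index, hence $O(n^{-1/2}/\mathcal N_n)=o(\mathcal N_n^{-2})$ after summing, thanks to $\mathcal N_n=O(n^{o(1)})$ from \eqref{eq:N=O(n^o(1))}.

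For the off-diagonal part, $\lambda'\ne\pm\lambda$, both phases $\lambda-\lambda'$ and $\lambda+\lambda'$ are non-zero; the double integral again factors into a product of two one-dimensional oscillatory integrals with phase $2\pi\langle\lambda\mp\lambda',\gamma(\cdot)\rangle$ and smooth bounded amplitude, each of which Van der Corput bounds by $\ll|\lambda\mp\lambda'|^{-1/2}$, so the off-diagonal term is $\ll|\lambda\mp\lambda'|^{-1}$. Summing, the total off-diagonal contribution is $\ll\frac1{\mathcal N_n^2}\sum_{\lambda\ne\lambda'}\left(|\lambda-\lambda'|^{-1}+|\lambda+\lambda'|^{-1}\right)$, and here the $\delta$-separatedness \eqref{generic} is used decisively: since $-\lambda'\in\Lambda_n$ is also distinct from $\lambda$, it forces $|\lambda-\lambda'|,|\lambda+\lambda'|\gg n^{1/4+\delta}$, whence the sum is $\ll\mathcal N_n^2\,n^{-1/4-\delta}$ and the off-diagonal contribution is $\ll n^{-1/4-\delta}=o(\mathcal N_n^{-2})$, again by \eqref{eq:N=O(n^o(1))}. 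Collecting the diagonal main terms with these error bounds gives the three identities.

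The main obstacle is precisely this off-diagonal estimate: the required error is $o(\mathcal N_n^{-2})$, a full power of $\mathcal N_n$ below the main term, so the softer bound $\sum_{\lambda\ne\lambda'}|\lambda-\lambda'|^{-1}\ll_\varepsilon\mathcal N_n^\varepsilon$ of \cite[Proposition 5.3]{R-W} — which only yields $o(1)$ once divided by $\mathcal N_n^2$ — is insufficient, and one genuinely needs the $\delta$-separation hypothesis together with the sub-polynomial growth of $\mathcal N_n$. A lesser point of care is the bookkeeping of the amplitude factors $\langle\lambda/|\lambda|,\dot\gamma(t_i)\rangle$ in cases 2) and 3): it is their presence, after averaging against $\mu_n$, that turns the diagonal sum into the energy integrals $\tfrac L2$ and $B_{\mathcal C}(\mu_n)$ rather than a bare power of $L$.
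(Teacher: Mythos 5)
Your proposal is correct and follows essentially the same route as the paper: expand the square into a double sum over $\lambda,\lambda'\in\Lambda_n$, extract the diagonal main term (with Lemma \ref{lem:ident sum squares prod} and \eqref{B} supplying $L^2/2$ and $B_{\Cc}(\mu_n)$ in cases 2) and 3)), bound the off-diagonal oscillatory integrals by $\|\lambda\mp\lambda'\|^{-1}$ via \cite[Lemma 5.2]{R-W}, and conclude with the $\delta$-separation \eqref{generic} together with $\Nc_n=O(n^{o(1)})$ that the error is $o(\Nc_n^{-2})$. The only cosmetic difference is that you work with the real (cosine/sine) expansion, so the phases $\lambda+\lambda'$ and $2\lambda$ appear and must be handled separately, whereas the paper uses the complex exponential form in which the diagonal is just $\lambda=\lambda'$; both are equivalent, and your explicit treatment of the amplitude factors in 2) and 3) matches what the paper delegates to \cite[Proposition 5.1]{R-W}.
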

\begin{proof}
Let us start with $1)$. Squaring out, we have (on separating the diagonal $\lambda = \lambda'$)
\begin{equation}\label{app1}
\begin{split}
\int_0^L \int_0^L r(t_1, t_2)^2\,dt_1 dt_2 = \frac{L^2}{\mathcal N_n} + \frac{1}{\mathcal N_n^2} \sum_{\lambda\ne \lambda'} \left | \int_0^L \e^{i2\pi \langle \lambda -\lambda', \gamma(t)\rangle}\,dt\right|^2.
\end{split}
\end{equation}
Lemma 5.2 in \cite{R-W} yields
$$
\int_0^L \e^{i2\pi \langle \lambda -\lambda', \gamma(t)\rangle}\,dt\ll \frac{1}{|\lambda -\lambda'|^{1/2}},
$$
therefore the contribution of the off-diagonal pairs in \paref{app1} is
$$
\frac{1}{\mathcal N_n^2} \sum_{\lambda\ne \lambda'} \left | \int_0^L \e^{i2\pi \langle \lambda -\lambda', \gamma(t)\rangle}\,dt\right|^2\ll \frac{1}{\mathcal N_n^2} \sum_{\lambda\ne \lambda'} \frac{1}{|\lambda -\lambda'|}.
$$
Condition \paref{generic} then allows to conclude part $1)$. The remaining terms can be dealt in a similar way to the proof of
\cite[Proposition 5.1]{R-W}, taking into account \paref{generic} to control the contribution of the ``off-diagonal terms".

\end{proof}

\begin{proof}[Proof of Lemma \ref{lemma4}]
Let us prove $1)$.  We can write
\begin{equation}\label{long1}
\begin{split}
\int_{\mathcal C} \int_{\mathcal C} r(t_1,t_2)^4\,dt_1 dt_2 &=\int_{\mathcal C} \int_{\mathcal C}  \left(\frac{1}{\mathcal N_n}\sum_\lambda \e^{i2\pi\langle \lambda,\gamma(t_1)-\gamma(t_2)\rangle}    \right)^4\,dt_1 dt_2\cr
&=\int_{\mathcal C} \int_{\mathcal C} \frac{1}{\mathcal N_n^4}\sum_{\lambda_1,\dots,\lambda_4} \e^{i2\pi\langle \lambda_1-\lambda_2+\lambda_3-\lambda_4,\gamma(t_1)-\gamma(t_2)\rangle}    \,dt_1 dt_2
\end{split}
\end{equation}
Now let us split the summation on the r.h.s. of \paref{long1} into two sums: one over quadruples $(\lambda_1,\dots,\lambda_4)$ such that $\lambda_1+\dots + \lambda_4=0$ and the other one over quadruples $(\lambda_1,\dots,\lambda_4)$ such that $\lambda_1+\dots + \lambda_4\ne 0$:
\begin{equation}\label{long1new}
\begin{split}
\int_{\mathcal C} \int_{\mathcal C} r(t_1,t_2)^4\,dt_1 dt_2 &=L^2 \frac{|S_4(n)|}{\mathcal N_n^4}+ \int_{\mathcal C} \int_{\mathcal C} \frac{1}{\mathcal N_n^4}\sum_{\lambda_1-\lambda_2+\lambda_3-\lambda_4\ne 0} \e^{i2\pi\langle \lambda_1-\lambda_2+\lambda_3-\lambda_4,\gamma(t_1)-\gamma(t_2)\rangle}    \,dt_1 dt_2\cr
&=L^2 \frac{|S_4(n)|}{\mathcal N_n^4}+ \frac{1}{\mathcal N_n^4}\sum_{\lambda_1-\lambda_2+\lambda_3-\lambda_4\ne 0}  \left| \int_{\mathcal C}\e^{i2\pi\langle \lambda_1-\lambda_2+\lambda_3-\lambda_4,\gamma(t_1)\rangle}    \,dt_1\right|^2,
\end{split}
\end{equation}
where
$$
S_4(n) :=\lbrace (\lambda_1, \dots, \lambda_4)\in \Lambda_n^4 : \lambda_1 + \dots + \lambda_4 =0\rbrace.
$$
Recall that \cite{K-K-W}
\begin{equation}\label{cardin_4length}
|S_4(n)| = 3\mathcal N_n(\mathcal N_n -1),
\end{equation}
and moreover \cite[Lemma 5.2]{R-W}
\begin{equation}\label{estim4}
 \frac{1}{\mathcal N_n^4}\sum_{\lambda_1-\lambda_2+\lambda_3-\lambda_4\ne 0}  \left| \int_0^L\e^{i2\pi\langle \lambda_1-\lambda_2+\lambda_3-\lambda_4,\gamma(t_1)\rangle}    \,dt_1\right|^2\ll \frac{1}{\mathcal N_n^4}\sum_{\lambda_1-\lambda_2+\lambda_3-\lambda_4\ne 0} \frac{1}{|\lambda_1+\dots+\lambda_4|}.
\end{equation}
Substituting \paref{cardin_4length} and \paref{estim4} into \paref{long1new} we get
\begin{equation}\label{long2}
\begin{split}
\int_{\mathcal C} \int_{\mathcal C} r(t_1,t_2)^4\,dt_1 dt_2
=3L^2 \frac{1}{\mathcal N_n^2}+ O\left(  \frac{1}{\mathcal N_n^4}\sum_{\lambda_1-\lambda_2+\lambda_3-\lambda_4\ne 0} \frac{1}{|\lambda_1+\dots+\lambda_4|} \right ).
\end{split}
\end{equation}
Lemma \ref{remainder4}, in particular \paref{quantity4}, then allows to estimate the error on the r.h.s. of \paref{long2}, thus concluding the proof of $1)$.

Let us now deal with 4).
\begin{equation}
\begin{split}
&\int_{\mathcal C} \int_{\mathcal C} (r_{12}/\alpha)r (r_1/\sqrt{\alpha})( r_2/\sqrt{\alpha})\,dt_1 dt_2 \cr
&=4\cdot \int_{\mathcal C} \int_{\mathcal C} \frac{1}{\mathcal N_n^4}\sum_{\lambda_1,\dots,\lambda_4} \left \langle \frac{\lambda_1}{|\lambda_1|},\dot \gamma(t_1)\right \rangle \left \langle \frac{\lambda_1}{|\lambda_1|},\dot \gamma(t_2)\right \rangle \left \langle \frac{\lambda_3}{|\lambda_3|},\dot \gamma(t_1)\right \rangle \left
 \langle \frac{\lambda_4}{|\lambda_4|},\dot \gamma(t_2)\right \rangle\times \cr
&\times \e^{i2\pi \left \langle \lambda_1+\lambda_2+\lambda_3+\lambda_4,\gamma(t_1)-\gamma(t_2)\right \rangle}    \,dt_1 dt_2\cr
&=-4\cdot \int_{\mathcal C} \int_{\mathcal C} \frac{1}{\mathcal N_n^4}\sum_{\lambda,\lambda'} \left \langle \frac{\lambda}{|\lambda|},\dot \gamma(t_1)\right \rangle \left \langle \frac{\lambda}{|\lambda|},\dot \gamma(t_2)\right \rangle
\left \langle \frac{\lambda'}{|\lambda'|},\dot \gamma(t_1)\right \rangle \left \langle \frac{\lambda'}{|\lambda'|},\dot \gamma(t_2)\right \rangle \,dt_1 dt_2\cr
&-8\cdot \int_{\mathcal C} \int_{\mathcal C} \frac{1}{\mathcal N_n^4}\underbrace{\sum_{\lambda,\lambda'} \left \langle \frac{\lambda}{|\lambda|},\dot \gamma(t_1)\right \rangle^2 \left \langle \frac{\lambda}{|\lambda|},\dot \gamma(t_2)\right \rangle
\left \langle \frac{\lambda'}{|\lambda'|},\dot \gamma(t_2)\right \rangle}_{=0} \,dt_1 dt_2\cr
&+4\cdot \int_{\mathcal C} \int_{\mathcal C} \frac{1}{\mathcal N_n^4}\sum_{\lambda_1+\dots+\lambda_4\ne 0} \left \langle \frac{\lambda_1}{|\lambda_1|},\dot \gamma(t_1)\right \rangle \left \langle \frac{\lambda_1}{|\lambda_1|},\dot \gamma(t_2)\right \rangle \left \langle \frac{\lambda_3}{|\lambda_3|},\dot \gamma(t_1)\right \rangle \left \langle \frac{\lambda_4}{|\lambda_4|},\dot \gamma(t_2)\right \rangle\times \cr
&\times \e^{i2\pi\langle \lambda_1+\lambda_2+\lambda_3+\lambda_4,\gamma(t_1)-\gamma(t_2)\rangle}    \,dt_1 dt_2\cr
&=-4F_{\mathcal C}(\mu_n) \frac{1}{\mathcal N_n^2} + o(\mathcal N_n^{-2}),
\end{split}
\end{equation}
where in the last step we used \paref{defF}, and Cauchy-\new{Schwarz} inequality and again \paref{quantity4_estim} to bound the contribution of ``off-diagonal" terms.
Let us now study $5)$.
\begin{equation}
\begin{split}
 &\int_{\mathcal C} \int_{\mathcal C}(r_1/\sqrt{\alpha})^2(r_2/\sqrt{\alpha})^2\,dt_1 dt_2
\cr
&=4\cdot \int_{\mathcal C} \int_{\mathcal C} \frac{1}{\mathcal N_n^4}\sum_{\lambda_1,\dots,\lambda_4} \left \langle \frac{\lambda_1}{|\lambda_1|},\dot \gamma(t_1)\right \rangle \left \langle \frac{\lambda_2}{|\lambda_2|},\dot \gamma(t_1)\right \rangle \left \langle \frac{\lambda_3}{|\lambda_3|},\dot \gamma(t_2)\right \rangle \left \langle \frac{\lambda_4}{|\lambda_4|},\dot \gamma(t_2)\right \rangle\times \cr
&\times \e^{i2\pi\left \langle \lambda_1-\lambda_2+\lambda_3-\lambda_4,\gamma(t_1)-\gamma(t_2)\right \rangle}    \,dt_1 dt_2\cr
&=4\cdot \int_{\mathcal C} \int_{\mathcal C} \frac{1}{\mathcal N_n^4}\sum_{\lambda,\lambda'} \left \langle \frac{\lambda}{|\lambda|},\dot \gamma(t_1)\right \rangle^2 \left \langle \frac{\lambda'}{|\lambda'|},\dot \gamma(t_2)\right \rangle^2   \,dt_1 dt_2\cr
&+4\cdot 2\int_{\mathcal C} \int_{\mathcal C} \frac{1}{\mathcal N_n^4}\sum_{\lambda,\lambda'} \left \langle \frac{\lambda}{|\lambda|},\dot \gamma(t_1)\right \rangle \left \langle \frac{\lambda'}{|\lambda'|},\dot \gamma(t_1)\right \rangle \left \langle \frac{\lambda}{|\lambda|},\dot \gamma(t_2)\right \rangle \left \langle \frac{\lambda'}{|\lambda'|},\dot \gamma(t_2)\right \rangle   \,dt_1 dt_2\cr
&+ 4\cdot \int_{\mathcal C} \int_{\mathcal C} \frac{1}{\mathcal N_n^4}\sum_{\lambda_1+\dots+\lambda_4\ne 0} \left \langle \frac{\lambda_1}{|\lambda_1|},\dot \gamma(t_1)\right \rangle \left \langle \frac{\lambda_2}{|\lambda_2|},\dot \gamma(t_1)\right \rangle \left \langle \frac{\lambda_3}{|\lambda_3|},\dot \gamma(t_2)\right \rangle \left \langle \frac{\lambda_4}{|\lambda_4|},\dot \gamma(t_2)\right \rangle\times \cr
&\times \e^{i2\pi\langle \lambda_1-\lambda_2+\lambda_3-\lambda_4,\gamma(t_1)-\gamma(t_2)\rangle}    \,dt_1 dt_2\cr
&= L^2\frac{1}{\mathcal N_n^2}+4\cdot 2\frac{1}{\mathcal N_n^2}F_n+ o(\mathcal N_n^{-2}),
\end{split}
\end{equation}
where for the last step we used the well-known equality
$
\frac{1}{\mathcal N_n}\sum_{\lambda} \left \langle \frac{\lambda}{|\lambda|},v\right \rangle^2= \frac12
$
which holds for every unit vector $v$,
and still Cauchy-\new{Schwarz} inequality and then \paref{quantity4_estim} to bound the contribution of ``off-diagonal" terms.

The proof of 2) is analogous to that of 1), whereas the proofs of 3), 6)-8) are analogous to that of 5) above, and hence omitted.
\end{proof}

\begin{lemma}\label{lemma6}
If $\mathcal C\subset \T$ is a smooth curve with nowhere vanishing curvature, then for $\delta$-separated sequences $\lbrace n\rbrace$ such that $\mathcal N_n\to +\infty$, we have
\begin{equation}
\begin{split}
&1) \int_{\mathcal C} \int_{\mathcal C} r(t_1,t_2)^6\,dt_1 dt_2 = o(\mathcal N_n^{-2}),\cr
&2) \int_{\mathcal C} \int_{\mathcal C} (r_1/\sqrt \alpha)^6\,dt_1 dt_2 = o(\mathcal N_n^{-2}),\cr
&3) \int_{\mathcal C} \int_{\mathcal C} (r_{12}/\alpha)^6\,dt_1 dt_2 = o(\mathcal N_n^{-2}).
\end{split}
\end{equation}
\end{lemma}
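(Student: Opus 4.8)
All three estimates are established along the same lines as the respective parts of Lemma \ref{lemma4} (and Lemma \ref{lemma2}), with a sixth power replacing a fourth one; I describe the argument for $1)$, the cases $2)$ and $3)$ being entirely analogous up to harmless, uniformly bounded amplitudes. The starting point is the expansion
\[
r(t_1,t_2)^6=\frac{1}{\mathcal N_n^6}\sum_{\lambda_1,\dots,\lambda_6\in\Lambda_n}\e^{i2\pi\langle\lambda_1+\dots+\lambda_6,\,\gamma(t_1)-\gamma(t_2)\rangle},
\]
together with the analogous formulas for $\left(r_1/\sqrt\alpha\right)^6$ and $\left(r_{12}/\alpha\right)^6$ obtained from $|\lambda|^2=n$ and $\alpha=2\pi^2n$, in which each summand carries additionally a product of six factors $\langle\lambda_j/|\lambda_j|,\dot\gamma(t_1)\rangle$ (and, in the $r_{12}$ case, also six factors $\langle\lambda_j/|\lambda_j|,\dot\gamma(t_2)\rangle$), all of modulus at most $1$.

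Integrating over $(t_1,t_2)\in\mathcal C\times\mathcal C$ and splitting the sixfold sum according to whether $\lambda_1+\dots+\lambda_6=0$ or not, I obtain in each of the three cases a \emph{diagonal} term of size $O\big(L^2\,|S_6(n)|/\mathcal N_n^6\big)$, where
\[
S_6(n):=\{(\lambda_1,\dots,\lambda_6)\in\Lambda_n^6:\ \lambda_1+\dots+\lambda_6=0\},
\]
plus an off-diagonal error. For the latter, the nowhere-vanishing curvature of $\mathcal C$ and van der Corput's lemma (\cite[Lemma 5.2]{R-W}, applicable since the relevant amplitudes are smooth and bounded) bound each inner oscillatory integral in $\gamma$ by $\ll|\lambda_1+\dots+\lambda_6|^{-1/2}$, so that the off-diagonal error is
\[
\ll\frac{1}{\mathcal N_n^6}\sum_{\lambda_1+\dots+\lambda_6\ne0}\frac{1}{|\lambda_1+\dots+\lambda_6|}=o\!\left(\frac{1}{\mathcal N_n^2}\right).
\]
This last bound is the sixfold analogue of \eqref{quantity4_estim}, proven in Appendix \ref{Soff-diagonal} (Lemma \ref{remainder6}) using the $\delta$-separatedness of $\{n\}$ together with the bound $\mathcal N_n=O(n^{o(1)})$ from \eqref{eq:N=O(n^o(1))}.

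Consequently all three parts reduce to the single estimate $|S_6(n)|=o(\mathcal N_n^4)$. Among the elements of $S_6(n)$, the \emph{degenerate} ones, for which $\{\lambda_1,\dots,\lambda_6\}$ splits into three pairs each summing to $0$, number at most $15\,\mathcal N_n^3$ and thus contribute only $O(\mathcal N_n^{-3})=o(\mathcal N_n^{-2})$ to the diagonal term; the remaining, non-degenerate, solutions are $o(\mathcal N_n^4)$ by the sixth-order spectral correlation bound for lattice points on circles (see \cite{K-K-W} and the references therein). Under the $\delta$-separatedness hypothesis the latter can also be argued directly: writing $|S_6(n)|=\sum_{v\in\Z^2}r_3(v)^2$ with $r_3(v):=\#\{(\lambda_1,\lambda_2,\lambda_3)\in\Lambda_n^3:\lambda_1+\lambda_2+\lambda_3=v\}$, one uses that two distinct radius-$\sqrt n$ circles meet in at most two points to get $r_3(v)\le2\mathcal N_n$ for $v\notin\Lambda_n\cup\{0\}$ (the indices $v\in\Lambda_n\cup\{0\}$ contributing only $O(\mathcal N_n^3)$), and invokes \eqref{generic} to bound the number of $v$ for which $r_3(v)$ is comparable to $\mathcal N_n$. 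I expect this lattice-point count $|S_6(n)|=o(\mathcal N_n^4)$ to be the only genuine obstacle: granting it, substituting back into the split identity and adding the two $o(\mathcal N_n^{-2})$ contributions finishes $1)$, while $2)$ and $3)$ follow identically, the additional amplitudes there being bounded by $1$ and affecting neither the diagonal count nor the van der Corput estimates.
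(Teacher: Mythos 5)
Your proposal is correct and follows essentially the same route as the paper: expand the sixth power, split off the quadruples (here sextuples) with $\lambda_1+\dots+\lambda_6=0$, bound the off-diagonal oscillatory integrals by van der Corput and Lemma \ref{remainder6}, and control the diagonal by a count of $|S_6(n)|$. The only difference is cosmetic: the paper invokes the Bombieri--Bourgain bound $|S_6(n)|=O(\mathcal N_n^{7/2})$ where you use the weaker (but sufficient) $o(\mathcal N_n^{4})$ bound from \cite{K-K-W}, so your sketched ``direct'' argument via $r_3(v)$ is unnecessary (and as stated it is incomplete, since $\max_v r_3(v)\ll\mathcal N_n$ alone only yields $O(\mathcal N_n^4)$).
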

\begin{proof}
Let us prove $1)$.
\begin{equation}\label{lunga1}
\begin{split}
\int_{\mathcal C} \int_{\mathcal C} r(t_1,t_2)^6\,dt_1 dt_2&=\int_{\mathcal C} \int_{\mathcal C} \left(\frac{1}{\mathcal N_n} \sum_{\lambda\in \Lambda_n} \e_\lambda(\gamma(t_1) - \gamma(t_2))\right)^6\,dt_1 dt_2\cr
&=\int_{\mathcal C} \int_{\mathcal C} \frac{1}{\mathcal N_n^6} \sum_{\lambda_1,\dots, \lambda_6\in \Lambda_n} \e_{\lambda_1+\dots + \lambda_6}(\gamma(t_1) - \gamma(t_2))\,dt_1 dt_2\cr
&=\frac{1}{\mathcal N_n^6}\sum_{\lambda_1,\dots, \lambda_6\in \Lambda_n}\left| \int_{\mathcal C}   \e_{\lambda_1+\dots + \lambda_6}(\gamma(t))\,dt \right|^2\cr
&= L^2 \frac{|S_6(n)|}{\mathcal N_n^6} + \frac{1}{\mathcal N_n^6} \sum_{\lambda_1+\dots + \lambda_6\ne 0}\left| \int_{\mathcal C}  \e_{\lambda_1+\dots + \lambda_6}(\gamma(t))\,dt \right|^2\cr
&\ll   \frac{|S_6(n)|}{\mathcal N_n^6} + \frac{1}{\mathcal N_n^6} \sum_{\lambda_1+\dots + \lambda_6\ne 0} \frac{1}{|\lambda_1 + \dots + \lambda_6|},
\end{split}
\end{equation}
where
$$
S_6(n) := \lbrace (\lambda_1, \dots, \lambda_6)\in \Lambda_n^6 : \lambda_1 + \dots + \lambda_6 =0\rbrace,
$$
and for the last step we used Lemma 5.2 in \cite{R-W}.
Recall now that \cite{B-B}
$$
|S_6(n)| = O\left( \mathcal N_n^{7/2} \right).
$$
Using the latter together with Lemma \ref{remainder6} (just below) in \paref{lunga1}, we can conclude the proof of $1)$. The proofs of the remaining cases $2), 3)$ are similar to that of $1)$ and hence omitted.

\end{proof}

\section{Contribution of ``off-diagonal" terms}\label{Soff-diagonal}

\subsection{Proof of Lemma \ref{remainder4}}

Let us start with the following simple lemma.

\begin{lemma}
\label{lem:angle chords}
Let $A,B,C,D$ be four points on $\mathbb S^{1}$ such that the segment $AC$ intersects the segment $BD$, and $O$ be the centre
of the circle. Then the angle
between $AC$ and $BD$ equals $$\frac{\angle AOB+\angle COD}{2}.$$
\end{lemma}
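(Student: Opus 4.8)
The plan is to reduce the statement to the classical ``angle between two chords'' theorem via the inscribed-angle theorem. Let $P$ denote the intersection point of the segments $AC$ and $BD$, which by hypothesis lies strictly inside the circle. Because the two \emph{segments} (and not merely the lines through them) meet, the four points occur in the cyclic order $A,B,C,D$ along $\mathbb S^{1}$, up to relabelling; in particular $P$ lies strictly between $A$ and $C$ and strictly between $B$ and $D$. The angle between the chords $AC$ and $BD$ is then the angle $\angle APB=\angle CPD$ (its supplement being the other angle at $P$, which by the same argument will come out as $\tfrac12(\angle BOC+\angle DOA)$).

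The first step is an application of the exterior-angle theorem to the triangle $BPC$. Since $P$ lies between $A$ and $C$, the ray $PA$ is the ray opposite to $PC$, so $\angle APB$ is precisely the exterior angle of $\triangle BPC$ at the vertex $P$; hence
$$
\angle APB=\angle PBC+\angle PCB=\angle DBC+\angle ACB,
$$
where we used that the ray $BP$ coincides with the ray $BD$ (as $P$ is between $B$ and $D$) and the ray $CP$ coincides with the ray $CA$. The second step is the inscribed-angle theorem: $\angle DBC$, viewed from $B$, subtends the arc $CD$ not containing $B$ — which, in the cyclic order $A,B,C,D$, is the arc whose central angle is $\angle COD$ — so $\angle DBC=\tfrac12\angle COD$; likewise $\angle ACB$ subtends the arc $AB$ not containing $C$, giving $\angle ACB=\tfrac12\angle AOB$. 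Combining the two displays yields
$$
\angle APB=\tfrac12\bigl(\angle AOB+\angle COD\bigr),
$$
as claimed. The only point that requires genuine care is the bookkeeping of \emph{which} arcs are intercepted, i.e.\ verifying that $B$ does not lie on the arc $CD$ subtended by $\angle DBC$ and that $C$ does not lie on the arc $AB$ subtended by $\angle ACB$; this is exactly where the hypothesis that the \emph{segments} cross is used, since it fixes the cyclic order of the four points and thereby rules out the degenerate intercepted-arc configurations. (Here $\angle AOB$ and $\angle COD$ are understood as the central angles of those two intercepted arcs.)

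If one prefers to bypass the configurational case-checking entirely, an alternative is a short trigonometric computation: writing $A=e^{i\alpha}$, $B=e^{i\beta}$, $C=e^{i\gamma}$, $D=e^{i\delta}$, one has $C-A=2i\sin\!\bigl(\tfrac{\gamma-\alpha}{2}\bigr)e^{i(\alpha+\gamma)/2}$, so the direction of the chord $AC$ is $\tfrac{\alpha+\gamma}{2}+\tfrac{\pi}{2}$ modulo $\pi$, and similarly the direction of $BD$ is $\tfrac{\beta+\delta}{2}+\tfrac{\pi}{2}$ modulo $\pi$; the angle between the chords is therefore $\tfrac12\bigl((\alpha+\gamma)-(\beta+\delta)\bigr)$ modulo $\pi$, which under the cyclic-order hypothesis on $\alpha,\beta,\gamma,\delta$ equals $\tfrac12(\angle AOB+\angle COD)$. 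I expect the main obstacle, such as it is, to be purely notational — making the choice of arcs (equivalently, the choice between $\angle APB$ and its supplement) precise enough that the identity holds on the nose — rather than anything substantive.
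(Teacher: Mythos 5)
Your proposal is correct and follows essentially the same route as the paper: both apply the exterior-angle theorem at the intersection point (you to $\triangle BPC$, the paper to $\triangle AED$) and then convert the two inscribed angles into halves of the central angles $\angle AOB$ and $\angle COD$ via the inscribed-angle theorem. The extra bookkeeping of cyclic order and the alternative trigonometric computation you sketch are fine but not needed beyond what the paper's one-line argument already does.
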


\begin{proof}

Let $E$ be the intersection point of $AC$ with $BD$ and $\alpha$ the corresponding angle. Then, as $\alpha$
is the external angle in the triangle $AED$ we have that
\begin{equation}
\alpha = \angle ADE + \angle EAD = \frac{\angle AOB + \angle COD}{2},
\end{equation}
since for any chord $XY$ on the circle, the angle $\angle XOY$ is twice
the angle $\angle XZY$ subtended by another point $Z$ on the circle.

\end{proof}

We are now in a position to prove Lemma \ref{remainder4}.

\begin{proof}[Proof of Lemma \ref{remainder4}]
First suppose that the segment $\lambda_{1}\lambda_{3}$ intersects the segment $\lambda_{2}\lambda_{4}$. For
$u = \lambda_{3}-\lambda_{1}$ and $w=\lambda_{4}-\lambda_{2}$, $v:=u-w$, and
$\alpha$ the angle between $u$ and $w$ we have
\begin{equation*}
\begin{split}
\|v\|^{2} &= \|u\|^{2}+\|w\|^{2} - 2\|u\|\cdot \|w\|\cos{\alpha} =(\|u\|-\|w\|)^{2} +
2\|u\|\cdot \|w\|(1-\cos{\alpha}) \\&\ge 2\|u\|\|w\|(1-\cos{\alpha}) \gg 2\|u\|\cdot \|w\| \cdot \alpha^{2},
\end{split}
\end{equation*}
provided that $\alpha \in [0,\pi]$ (say). Since $\|u\|\cdot \|w\| \gg n^{1/2+2\delta}$ by
assumption \eqref{generic}, and $$\alpha \gg n^{- 1/4+\delta}$$ by Lemma \ref{lem:angle chords},
we have
\begin{equation*}
\|v\|\gg n^{1+2\delta} \cdot n^{-1/2+2\delta} = n^{4\delta},
\end{equation*}
which, in case $\lambda_{1}\lambda_{3}$ intersects $\lambda_{2}\lambda_{4}$, is stronger than claimed.

Otherwise, let us assume that $\lambda_{1}\lambda_{3}$ does not intersect $\lambda_{2}\lambda_{4}$.
That means that both
$\lambda_{1},\lambda_{3}$ are lying on the same arc $\arc{\lambda_{2}\lambda_{4}}$ (one of two choices) and
either the arcs $\arc{\lambda_{2}\lambda_{1}}$, $\arc{\lambda_{1}\lambda_{3}}$,
$\arc{\lambda_{3}\lambda_{4}}$ are pairwise disjoint or
the arcs $\arc{\lambda_{4}\lambda_{1}}$, $\arc{\lambda_{1}\lambda_{3}}$ and $\arc{\lambda_{3}\lambda_{2}}$
are pairwise disjoint;
we assume w.l.o.g that the former holds. Since $$\lambda_{3}-\lambda_{1}=(-\lambda_{1})-(-\lambda_{3}),$$ and
upon replacing $(\lambda_{1},\lambda_{3})$ by $(-\lambda_{3},-\lambda_{1})$ if
necessary, we may assume that
$\lambda_{1},\lambda_{3}$ are lying in the smaller of the arcs
$\arc{\lambda_{2}\lambda_{4}}$, i.e. the angle $\lambda_{2}O\lambda_{4} < \pi.$

Denote as before $u = \lambda_{3}-\lambda_{1}$ and $w=\lambda_{4}-\lambda_{2}$, $v:=u-w$. Here we claim that
$|\|u\|-\|w\||$ is not too small, and then, by the triangle inequality, so is $$\|v\|\ge  |\|u\|-\|w\||.$$
Let $\alpha$ be the angle $\alpha=\angle \lambda_{2}O\lambda_{4}$, and the angle
$\beta = \lambda_{1}O\lambda_{3} < \alpha$.
We have $\|u\| = 2\sqrt n\sin{(\alpha/2)}$, $\|w\| = 2\sqrt n\sin{(\beta/2)}$, so that
\begin{equation*}
\|u\|-\|w\| = 2\sqrt n\sin\left (\frac{\alpha-\beta}{4} \right )\cos \left(\frac{\alpha+\beta}{4} \right).
\end{equation*}
However, by the assumption \eqref{generic} we have that both $\alpha-\beta \gg n^{-1/4+\delta}$ and
$2\pi-(\alpha+\beta) \gg n^{-1/4+\delta}$ as at least one of $\alpha$ or $\beta$ falls short of $\pi$ by
at least $\gg n^{-1/4+\delta}$. Hence
\begin{equation*}
\|u\|-\|w\| \gg \sqrt n \cdot n^{-1/4 +\delta}\cdot n^{-1/4 +\delta}=n^{2\delta}
\end{equation*}
which yields the statement of Lemma \ref{remainder4} in this case.

\end{proof}

\subsection{\new{Contribution of the higher order chaoses}}

\begin{lemma}\label{remainder6}
For $\delta$-separated sequences $\lbrace n\rbrace\subset S$ we have the bound
\begin{equation}
\label{eq:1/N^4 sum 1/l1+...l6 =o(1)}
\frac{1}{\mathcal N_n^4} \sum_{\lambda_1+\dots+\lambda_6\ne 0} \frac{1}{\|\lambda_1 + \dots + \lambda_6\|} = o_{\mathcal N_{n}\rightarrow \infty}
\left (1 \right ).
\end{equation}
\end{lemma}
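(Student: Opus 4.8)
The plan is to show that $\Sigma:=\sum_{\lambda_1+\dots+\lambda_6\ne 0}\|\lambda_1+\dots+\lambda_6\|^{-1}=o(\mathcal N_n^{4})$. First I would discard the $6$-tuples with $\lambda_5+\lambda_6=0$: for these the total sum equals the four-fold sum $\lambda_1+\lambda_2+\lambda_3+\lambda_4$, which has norm $\gg n^{2\delta}$ whenever nonzero by Lemma \ref{remainder4}; there are $\mathcal N_n$ such pairs and at most $\mathcal N_n^{4}$ quadruples, so this contributes $\ll\mathcal N_n^{5}n^{-2\delta}$ to $\Sigma$, which after dividing by $\mathcal N_n^{4}$ tends to $0$ because $\mathcal N_n=O(n^{o(1)})$ by \eqref{eq:N=O(n^o(1))} and $\delta>0$ is fixed. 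It remains to bound $\Sigma''$, the same sum over $6$-tuples with $\lambda_5+\lambda_6\ne 0$ (and total sum nonzero).

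I would estimate $\Sigma''$ dyadically: $\Sigma''\le\sum_{j\ge 0}2^{-j}N_j$, where $N_j$ is the number of $6$-tuples with $\lambda_5+\lambda_6\ne 0$ and $0<\|\lambda_1+\dots+\lambda_6\|<2^{j+1}$, and $j$ runs up to $j\asymp\log\sqrt n$. The crucial observation, which is the heart of the argument, is that the set $V:=\{\lambda+\lambda':\lambda,\lambda'\in\Lambda_n,\ \lambda+\lambda'\ne 0\}$ of nonzero two-fold sums is $\gg n^{2\delta}$-separated: distinct $w,w'\in V$ differ by $w-w'=\lambda_1+\lambda_2-\lambda_3-\lambda_4$, a nonzero element of $\Lambda_n+\Lambda_n+\Lambda_n+\Lambda_n$ (using $\Lambda_n=-\Lambda_n$), so $\|w-w'\|\gg n^{2\delta}$ by Lemma \ref{remainder4}. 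Hence for a fixed quadruple $(\lambda_1,\dots,\lambda_4)$ with sum $S$, the elements of $V$ in the ball of radius $2^{j+1}$ about $-S$ number $\ll 4^{j}n^{-4\delta}+1$, each arising from at most two ordered pairs $(\lambda_5,\lambda_6)$; summing over the $\le\mathcal N_n^{4}$ quadruples gives
\[
N_j\ \ll\ \mathcal N_n^{4}\bigl(4^{j}n^{-4\delta}+1\bigr).
\]
This is too weak for the smallest $j$, so for small $j$ I would instead use $N_j\le\sum_{\|z\|<2^{j+1}}R_6(z,n)\ll 4^{j}|S_6(n)|\ll 4^{j}\mathcal N_n^{7/2}$, where $R_6(z,n):=\#\{(\lambda_1,\dots,\lambda_6)\in\Lambda_n^{6}:\sum\lambda_i=z\}$ satisfies $R_6(z,n)\le R_6(0,n)=|S_6(n)|$ (a standard consequence of $R_6(\cdot,n)$ being the Fourier transform of $(\sum_{\lambda\in\Lambda_n}\e_\lambda)^{6}\ge 0$, the inner sum being real since $\Lambda_n=-\Lambda_n$), $S_6(n):=\{(\lambda_1,\dots,\lambda_6)\in\Lambda_n^6:\sum\lambda_i=0\}$, and $|S_6(n)|=O(\mathcal N_n^{7/2})$ by \cite{B-B}. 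The trivial bound $N_j\le\mathcal N_n^{6}$ will be used for large $j$.

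Finally I would assemble the dyadic sum, splitting the range of $j$ at $2^{j}\sim\mathcal N_n^{1/4}$, $2^{j}\sim n^{2\delta}$, $2^{j}\sim\mathcal N_n n^{2\delta}$: on $2^{j}\le\mathcal N_n^{1/4}$ use $N_j\ll 4^{j}\mathcal N_n^{7/2}$ (contributing $\ll\mathcal N_n^{1/4}\mathcal N_n^{7/2}=\mathcal N_n^{15/4}$); on $\mathcal N_n^{1/4}<2^{j}\le n^{2\delta}$ use $N_j\ll\mathcal N_n^{4}$ (contributing $\ll\mathcal N_n^{4}\mathcal N_n^{-1/4}=\mathcal N_n^{15/4}$); on $n^{2\delta}<2^{j}\le\mathcal N_n n^{2\delta}$ use $N_j\ll 4^{j}\mathcal N_n^{4}n^{-4\delta}$ (contributing $\ll\mathcal N_n^{4}n^{-4\delta}\cdot\mathcal N_n n^{2\delta}=\mathcal N_n^{5}n^{-2\delta}$); on $2^{j}>\mathcal N_n n^{2\delta}$ use $N_j\ll\mathcal N_n^{6}$ (contributing $\ll\mathcal N_n^{6}(\mathcal N_n n^{2\delta})^{-1}=\mathcal N_n^{5}n^{-2\delta}$). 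Thus $\Sigma''\ll\mathcal N_n^{15/4}+\mathcal N_n^{5}n^{-2\delta}$, and so
\[
\frac{1}{\mathcal N_n^{4}}\sum_{\lambda_1+\dots+\lambda_6\ne 0}\frac{1}{\|\lambda_1+\dots+\lambda_6\|}\ \ll\ \mathcal N_n^{-1/4}+\mathcal N_n\,n^{-2\delta}\ \longrightarrow\ 0,
\]
since $\mathcal N_n\to\infty$ and $\mathcal N_n=O(n^{o(1)})$. The main obstacle is precisely this separation property of $V$: using only the $n^{1/4+\delta}$-separation of $\Lambda_n$ from \eqref{generic}, or the pointwise bound $\|\lambda_1+\dots+\lambda_6\|\ge 1$, one obtains merely $n^{o(1)}$ rather than $o(1)$, and it is the upgrade to $n^{2\delta}$-separation for the two-fold sums — combined with the Bombieri--Bourgain estimate controlling the rare $6$-tuples whose total sum is $O(1)$ — that produces the needed decay.
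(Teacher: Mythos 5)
Your argument is correct, and it reaches the conclusion by a partly different route than the paper. The common core is Lemma \ref{remainder4}: both you and the paper exploit the fact that two distinct two-fold sums $\lambda_5+\lambda_6\ne\lambda_5'+\lambda_6'$ differ by a nonzero four-fold sum and are therefore $\gg n^{2\delta}$ apart. The paper uses this qualitatively: it fixes one threshold $A=\tfrac12\Nc_n^{2+\epsilon}$, notes that below $A$ each quadruple $(\lambda_1,\dots,\lambda_4)$ admits only $O(1)$ completions $(\lambda_5,\lambda_6)$ (since $2A\ll n^{2\delta}$ by \eqref{eq:N=O(n^o(1))}), introduces a second, constant threshold $B$, and disposes of the tuples with bounded total sum by a (footnoted) modification of \cite[Theorem 2.2]{K-K-W} showing that $\lambda_1+\dots+\lambda_6=v$ has $o(\Nc_n^4)$ solutions for each fixed $v$; the result is a purely qualitative $o(1)$ obtained by letting $B\to\infty$. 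You instead make the separation of the two-fold sumset $V$ explicit, run a dyadic decomposition in the size of the total sum with a packing bound $\ll 4^{j}n^{-4\delta}+1$ per quadruple, and replace the modified KKW count near the origin by the Bombieri--Bourgain bound $|S_6(n)|\ll\Nc_n^{7/2}$ (already invoked in the paper's Lemma \ref{lemma6}) together with the positivity observation $R_6(z,n)\le R_6(0,n)$, plus the at-most-two-representations fact for a nonzero vector as a sum of two points of $\Lambda_n$ (two circles meet in at most two points). What your approach buys is a quantitative rate, $O(\Nc_n^{-1/4}+\Nc_n n^{-2\delta})$, and it avoids having to adapt the proof of \cite[Theorem 2.2]{K-K-W}; what the paper's approach buys is that it needs only the softer $o(\Nc_n^4)$ six-term correlation estimate rather than the power-saving $\Nc_n^{7/2}$ bound at that point (though Bombieri--Bourgain is used in Lemma \ref{lemma6} anyway, so no new input is really required by your version). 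One small remark: your first step, separating the pairs with $\lambda_5+\lambda_6=0$, is a clean way to handle a degenerate case that the paper's claim (stated only for quadruples with $\lambda_1+\dots+\lambda_4\ne0$) passes over in silence.
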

\begin{proof}
Let $\epsilon>0$ be positive number, and set $A:=\frac{1}{2}\mathcal \Nc_n^{2+\epsilon}$ (say). We distinguish between two cases: \\
i)  $\|\lambda_1 + \dots + \lambda_6 \|\ge A$, ii) $\| \lambda_1 + \dots + \lambda_6\| < A$.
The contribution of all summands with i) holding is $$\ll \frac{1}{A} \ll \frac{1}{\Nc_{n}^{\epsilon}},$$ hence
we are only to bound the contribution of summands satisfying ii).

Assume here that ii) indeed holds.
We claim that for fixed $\lambda_1,\dots, \lambda_4$ such that $\lambda_{1}+\ldots+\lambda_{4}\ne 0$,
there exist $O(1)$-choices for $\lambda_5,\lambda_6$. Indeed, if
we assume that both $\|\lambda_1+\dots+\lambda_4 + \lambda_5 + \lambda_6 \| < A$ and $\|\lambda_1+\dots+\lambda_4 + \lambda'_5 + \lambda'_6 \| < A,$ then, by the triangle inequality, we obtain
\begin{equation}\label{dis}
\|\lambda_5 + \lambda_6 - \lambda_5' - \lambda_6'\| < 2 A.
\end{equation}
By Lemma \ref{remainder4} and in light of \eqref{eq:N=O(n^o(1))}, \paref{dis} is valid only if
$\lambda_5 + \lambda_6 - \lambda_5' - \lambda_6'=0$.
This in turn implies that either $\lambda_5=-\lambda_6$ or $\lambda_5=\lambda_5'$ or $\lambda_5=\lambda_6'$).
The possibility $\lambda_5=-\lambda_6$ is not valid, since then
$\lambda_1+\dots+\lambda_6=\lambda_1+\dots+\lambda_4$,
and therefore given $\lambda_{1},\ldots \lambda_{4}$ there could be at most two choices for $\lambda_{5},\lambda_{6}$
such that ii) holds.

Let $B>0$ be a large (but fixed) parameter, and assume that ii) holds. We distinguish between further two cases:
a) $\|\lambda_1 + \dots + \lambda_6 \|\ge B$, or b) $\|\lambda_1 + \dots + \lambda_6 \|< B$.
By the above, the contribution of summands satisfying a) to the l.h.s. of \eqref{eq:1/N^4 sum 1/l1+...l6 =o(1)} is
$O\left(\frac{1}{B}\right)$.

To treat terms that satisfy b) we recall that \cite[Theorem 2.2]{K-K-W} shows that the number of $6$-tuples
$(\lambda_{1},\ldots,\lambda_{6}) \in \Lambda_{n}^{6}$ satisfying $\lambda_{1}+\ldots+\lambda_{6}=0$ is $o(\Nc_{n}^{4})$.
A slight modification\footnote{In Eq. 59, consider $y_1+y_2 \in A+v$ and in Eq. 60, consider $y_1+y_2\in A+v$ so that we have $\langle 1_A \star 1_A, 1_{A+v}\rangle$} of the proof of \cite[Theorem 2.2]{K-K-W} yields that the same holds for an arbitrary {\em fixed}
$v\in \Z^{2}$ in place of $0$, i.e. the number of tuples such $6$-tuples satisfying
\begin{equation*}
\lambda_{1}+\ldots+\lambda_{6}=v
\end{equation*}
is $o(\Nc_{n}^{4})$. Therefore the total contribution to \eqref{eq:1/N^4 sum 1/l1+...l6 =o(1)} of summands satisfying b) is
$o_{B}(1)$.
Consolidating the various bounds we encountered, we have that
\begin{equation*}
\frac{1}{\mathcal N_n^4} \sum_{\lambda_1+\dots+\lambda_6\ne 0} \frac{1}{\|\lambda_1 + \dots + \lambda_6\|} \ll \frac{1}{\Nc_{n}^{\epsilon}}+\frac{1}{B}+o_{B}(1),
\end{equation*}
which certainly implies \eqref{eq:1/N^4 sum 1/l1+...l6 =o(1)}.
\end{proof}

\section{Auxiliary computations for the fourth chaos}\label{appendix4}
\begin{proof}[Proof of Lemma \ref{H1}]
Let us start with $1)$. Recall the expression for the fourth Hermite polynomial: $H_4(t) = t^4 -6t^2 +3$.
\begin{equation}\label{1}
\begin{split}
1) \int_0^L H_{4}(f_n(t))\,dt &= \int_0^L \left (f_n(t)^4 -6f_n(t)^2 +3 \right)\,dt\cr
&= \frac{1}{\mathcal N_n^2}\sum_{\lambda, \lambda', \lambda'', \lambda'''}a_\lambda \overline{a_{\lambda'}}a_{\lambda''}\overline{a_{\lambda'''}}\int_0^L
\e^{i2\pi\langle \lambda -\lambda' + \lambda'' -\lambda''', \gamma(t)\rangle}\,dt  \cr
&-6 \frac{1}{\mathcal N_n}\sum_{\lambda, \lambda'}a_\lambda \overline{a_{\lambda'}}\int_0^L
\e^{i2\pi\langle \lambda -\lambda', \gamma(t)\rangle}\,dt +3L.
\end{split}
\end{equation}
Let us divide the first term on the r.h.s. of \paref{1} into two summation, one over the quadruples in $S_4(n)$ and the other one over the quadruples not belonging to $S_4(n)$.
\begin{equation}\label{2}
\begin{split}
\int_0^L H_{4}(f_n(t))\,dt &= 3L\frac{1}{\mathcal N_n^2} \sum_{\lambda, \lambda'}|a_\lambda|^2 |a_{\lambda'}|^2 - 3L \frac{1}{\mathcal N_n^2} \sum_{\lambda}|a_\lambda|^4 \cr
&+\frac{1}{\mathcal N_n^2}\sum_{\lambda, \lambda', \lambda'', \lambda'''\notin S_n(4)}a_\lambda \overline{a_{\lambda'}}a_{\lambda''}\overline{a_{\lambda'''}}\int_0^L
\e^{i2\pi\langle \lambda -\lambda' + \lambda'' -\lambda''', \gamma(t)\rangle}\,dt\cr
&-6 \frac{1}{\mathcal N_n}\sum_{\lambda, \lambda'}a_\lambda \overline{a_{\lambda'}}\int_0^L
\e^{i2\pi\langle \lambda -\lambda', \gamma(t)\rangle}\,dt +3L.
\end{split}
\end{equation}
Now let us divide the fourth term on the r.h.s. of \paref{2} into two sums, one for the diagonal terms and the other one for the off-diagonal terms.
That is,
\begin{equation}\label{3}
\begin{split}
\int_0^L H_{4}(f_n(t))\,dt  & = 3L\frac{1}{\mathcal N_n^2} \sum_{\lambda, \lambda'}|a_\lambda|^2 |a_{\lambda'}|^2  -6L \frac{1}{\mathcal N_n}\sum_{\lambda}|a_\lambda|^2  +3L \cr
&-6 \frac{1}{\mathcal N_n}\sum_{\lambda\ne \lambda'}a_\lambda \overline{a_{\lambda'}}\int_0^L
\e^{i2\pi\langle \lambda -\lambda', \gamma(t)\rangle}\,dt - 3L \frac{1}{\mathcal N_n^2} \sum_{\lambda}|a_\lambda|^4\cr
&+\frac{1}{\mathcal N_n^2}\sum_{\lambda, \lambda', \lambda'', \lambda'''\notin S_n(4)}a_\lambda \overline{a_{\lambda'}}a_{\lambda''}\overline{a_{\lambda'''}}\int_0^L
\e^{i2\pi\langle \lambda -\lambda' + \lambda'' -\lambda''', \gamma(t)\rangle}\,dt.
\end{split}
\end{equation}
Note that the first three terms on the r.h.s. of \paref{3} can be rewritten as
\begin{equation*}
\begin{split}
\int_0^L H_{4}(f_n(t))\,dt  &= 3\cdot 2\cdot L \frac{1}{\mathcal N_n}\left ( \frac{1}{\sqrt{\mathcal N_n/2}} \sum_{\lambda\in \Lambda_n^+}(|a_\lambda|^2-1)\right)^2\cr   &- 3L \frac{1}{\mathcal N_n^2} \sum_{\lambda}|a_\lambda|^4
-6 \frac{1}{\mathcal N_n}\sum_{\lambda\ne \lambda'}a_\lambda \overline{a_{\lambda'}}\int_0^L
\e^{i2\pi\langle \lambda -\lambda', \gamma(t)\rangle}\,dt \cr
&+\frac{1}{\mathcal N_n^2}\sum_{\lambda, \lambda', \lambda'', \lambda'''\notin S_n(4)}a_\lambda \overline{a_{\lambda'}}a_{\lambda''}\overline{a_{\lambda'''}}\int_0^L
\e^{i2\pi\langle \lambda -\lambda' + \lambda'' -\lambda''', \gamma(t)\rangle}\,dt\cr
&= 6\cdot L \frac{1}{\mathcal N_n} W_1(n)^2  - 3L \frac{1}{\mathcal N_n^2} \sum_{\lambda}|a_\lambda|^4-6 \frac{1}{\mathcal N_n}\sum_{\lambda\ne \lambda'}a_\lambda \overline{a_{\lambda'}}\int_0^L
\e^{i2\pi\langle \lambda -\lambda', \gamma(t)\rangle}\,dt \cr
&+\frac{1}{\mathcal N_n^2}\sum_{\lambda, \lambda', \lambda'', \lambda'''\notin S_n(4)}a_\lambda \overline{a_{\lambda'}}a_{\lambda''}\overline{a_{\lambda'''}}\int_0^L
\e^{i2\pi\langle \lambda -\lambda' + \lambda'' -\lambda''', \gamma(t)\rangle}\,dt\cr
&= X_n^a + X_n^b,
\end{split}
\end{equation*}
where $W_1(n)$ is defined as in \paref{W1}, $X_n^a$ is given in \paref{xyz} and
\begin{equation}\label{4}
\begin{split}
X_n^b :=  &   - 3L \frac{1}{\mathcal N_n^2} \sum_{\lambda}(|a_\lambda|^4-2)-6 \frac{1}{\mathcal N_n}\sum_{\lambda\ne \lambda'}a_\lambda \overline{a_{\lambda'}}\int_0^L
\e^{i2\pi\langle \lambda -\lambda', \gamma(t)\rangle}\,dt \cr
&+\frac{1}{\mathcal N_n^2}\sum_{\lambda, \lambda', \lambda'', \lambda'''\notin S_n(4)}a_\lambda \overline{a_{\lambda'}}a_{\lambda''}\overline{a_{\lambda'''}}\int_0^L
\e^{i2\pi\langle \lambda -\lambda' + \lambda'' -\lambda''', \gamma(t)\rangle}\,dt.
\end{split}
\end{equation}
Let us prove that, as $\mathcal N_n\to +\infty$,
\begin{equation}\label{small o}
\Var(X_n^b) = o\left( \frac{1}{\mathcal N_n^2} \right).
\end{equation}
For the first term on the r.h.s. of \paref{4} we have
\begin{equation}
\Var\left ( \frac{1}{\mathcal N_n^2} \sum_{\lambda}(|a_\lambda|^4-2) \right) = O(\mathcal N_n^{-3})
\end{equation}
whereas for the second one \cite[(5.18)]{R-W} and \paref{generic} give
\begin{equation}
\Var\left ( \frac{1}{\mathcal N_n}\sum_{\lambda\ne \lambda'}a_\lambda \overline{a_{\lambda'}}\int_0^L
\e^{i2\pi\langle \lambda -\lambda', \gamma(t)\rangle} \right ) = o\left(\frac{1}{\mathcal N_n^2}  \right).
\end{equation}
For the last term
\begin{equation}
\Var\left (\frac{1}{\mathcal N_n^2}\sum_{\lambda, \lambda', \lambda'', \lambda'''\notin S_n(4)}a_\lambda \overline{a_{\lambda'}}a_{\lambda''}\overline{a_{\lambda'''}}\int_0^L
\e^{i2\pi\langle \lambda -\lambda' + \lambda'' -\lambda''', \gamma(t)\rangle} \right ) = o\left(\frac{1}{\mathcal N_n^2}  \right).
\end{equation}
follows from \cite[(5.18)]{R-W} and Lemma \ref{remainder4}. This concludes the proof of $1)$.

Let us now study the second summand in \paref{proj4}. The argument given below is similar to the one above concerning
the first summand in \paref{proj4}; accordingly we will omit some technical details.
\begin{equation}\label{5}
\begin{split}
\int_0^L H_{4}(f'_n(t))\,dt &= \int_0^L \left (f'_n(t)^4 -6f'_n(t)^2 +3 \right)\,dt\cr
&= \frac{1}{(2\pi^2n)^2}\int_0^L \frac{(2\pi)^4}{\mathcal N_n^2}\sum_{\lambda,\lambda',\lambda'',\lambda'''} a_\lambda \overline{a_{\lambda'}}a_{\lambda''} \overline{a_{\lambda'''}} \times \cr
&
\times \langle \lambda, \dot \gamma(t)\rangle\langle \lambda', \dot \gamma(t)\rangle\langle \lambda'', \dot \gamma(t)\rangle\langle \lambda''', \dot \gamma(t)\rangle  \e^{i2\pi\langle \lambda -\lambda'+\lambda''-\lambda''', \gamma(t)\rangle}\,dt\cr
& -6 \frac{1}{2\pi^2n}\int_0^L \frac{(2\pi)^2}{\mathcal N_n}\sum_{\lambda,\lambda'} a_\lambda \overline{a_{\lambda'}}
\langle \lambda, \dot \gamma(t)\rangle\langle \lambda', \dot \gamma(t)\rangle \e^{i2\pi\langle \lambda -\lambda', \gamma(t)\rangle}\,dt+3L.
\end{split}
\end{equation}
Dealing with \paref{5} as in \paref{3} we obtain
\begin{equation*}
\begin{split}
\int_0^L H_{4}(f'_n(t))\,dt &=3
\int_0^L \left(  2\frac{1}{\mathcal N_n}\sum_{\lambda} (|a_\lambda|^2 -1)\left\langle \frac{\lambda}{|\lambda|}, \dot \gamma(t)\right\rangle^2\right)^2\,dt \cr
&  -3 \frac{1}{(2\pi^2n)^2} \frac{(2\pi)^4}{\mathcal N_n^2}\sum_{\lambda} |a_\lambda|^4
\int_0^L \langle \lambda, \dot \gamma(t)\rangle^4\,dt\cr
&+\frac{1}{(2\pi^2n)^2}\int_0^L \frac{(2\pi)^4}{\mathcal N_n^2}\sum_{\lambda,\lambda',\lambda'',\lambda'''\notin S_n(4)} a_\lambda \overline{a_{\lambda'}}a_{\lambda''} \overline{a_{\lambda'''}}\times \cr
&\times \langle \lambda, \dot \gamma(t)\rangle\langle \lambda', \dot \gamma(t)\rangle\langle \lambda'', \dot \gamma(t)\rangle\langle \lambda''', \dot \gamma(t)\rangle \e^{i2\pi\langle \lambda -\lambda'+\lambda''-\lambda''', \gamma(t)\rangle}\,dt\cr
&-6 \frac{1}{2\pi^2n}\int_0^L \frac{(2\pi)^2}{\mathcal N_n}\sum_{\lambda\ne\lambda'} a_\lambda \overline{a_{\lambda'}}
\langle \lambda, \dot \gamma(t)\rangle\langle \lambda', \dot \gamma(t)\rangle \e^{i2\pi\langle \lambda -\lambda', \gamma(t)\rangle}\,dt\cr
&= Y_n^a + Y_n^b,
\end{split}
\end{equation*}
where $Y_n^a$ is defined as in \paref{xyz} and
\begin{equation*}
\begin{split}
Y_n^b := & -3 \frac{1}{(2\pi^2n)^2} \frac{(2\pi)^4}{\mathcal N_n^2}\sum_{\lambda} (|a_\lambda|^4-2)
\int_0^L \langle \lambda, \dot \gamma(t)\rangle^4\,dt\cr
&+\frac{1}{(2\pi^2n)^2}\int_0^L \frac{(2\pi)^4}{\mathcal N_n^2}\sum_{\lambda,\lambda',\lambda'',\lambda'''\notin S_n(4)} a_\lambda \overline{a_{\lambda'}}a_{\lambda''} \overline{a_{\lambda'''}}\times \cr
&\times \langle \lambda, \dot \gamma(t)\rangle\langle \lambda', \dot \gamma(t)\rangle\langle \lambda'', \dot \gamma(t)\rangle\langle \lambda''', \dot \gamma(t)\rangle \e^{i2\pi\langle \lambda -\lambda'+\lambda''-\lambda''', \gamma(t)\rangle}\,dt\cr
&-6 \frac{1}{2\pi^2n}\int_0^L \frac{(2\pi)^2}{\mathcal N_n}\sum_{\lambda\ne\lambda'} a_\lambda \overline{a_{\lambda'}}
\langle \lambda, \dot \gamma(t)\rangle\langle \lambda', \dot \gamma(t)\rangle \e^{i2\pi\langle \lambda -\lambda', \gamma(t)\rangle}\,dt.
\end{split}
\end{equation*}
We are now left with the third summand in \paref{proj4}.
\begin{equation}\label{equazione1}
\begin{split}
\int_0^L H_{2}(f_n(t))H_{2}(f'_n(t))\,dt &= \int_0^L (f_n(t)^2-1)(f'_n(t)^2-1)\,dt\cr
&=\frac{(2\pi)^2}{2\pi^2n\mathcal N_n^2}\int_0^L \sum_{\lambda,\lambda',\lambda'',\lambda'''} a_\lambda \overline{a_{\lambda'}}a_{\lambda''} \overline{a_{\lambda'''}}\times\cr
&\times
\langle \lambda'', \dot \gamma(t)\rangle\langle \lambda''', \dot \gamma(t)\rangle \e^{i2\pi\langle \lambda -\lambda'+\lambda''-\lambda''', \gamma(t)\rangle}\,dt\cr
&- \frac{1}{\mathcal N_n}\sum_{\lambda, \lambda'}a_\lambda \overline{a_{\lambda'}}\int_0^L
\e^{i2\pi\langle \lambda -\lambda', \gamma(t)\rangle}\,dt\cr
&-\frac{1}{2\pi^2n}\int_0^L \frac{(2\pi)^2}{\mathcal N_n}\sum_{\lambda,\lambda'} a_\lambda \overline{a_{\lambda'}}
\langle \lambda, \dot \gamma(t)\rangle\langle \lambda', \dot \gamma(t)\rangle \e^{i2\pi\langle \lambda -\lambda', \gamma(t)\rangle}\,dt+L
\end{split}
\end{equation}
Finally, we can rewrite \paref{equazione1} as
\begin{equation*}
\begin{split}
&\int_0^L H_{2}(f_n(t))H_{2}(f'_n(t))\,dt \cr
&=\left( \frac{1}{\mathcal N_n} \sum_\lambda (|a_\lambda|^2 -1) \right )
\left( \frac{1}{\mathcal N_n} \sum_{\lambda'} (|a_\lambda'|^2 -1)  \int_0^L 2 \left \langle \frac{\lambda'}{|\lambda'|}, \dot \gamma(t)\right \rangle^2\,dt   \right )\cr
&-\frac{(2\pi)^2}{2\pi^2n\mathcal N_n^2}\sum_{\lambda} |a_\lambda|^4\int_0^L
\langle \lambda, \dot \gamma(t)\rangle^2\,dt\cr
&+\frac{(2\pi)^2}{2\pi^2n\mathcal N_n^2}\sum_{\lambda,\lambda',\lambda'',\lambda'''\notin S_n(4)} a_\lambda \overline{a_{\lambda'}}a_{\lambda''} \overline{a_{\lambda'''}}\int_0^L
\langle \lambda'', \dot \gamma(t)\rangle\langle \lambda''', \dot \gamma(t)\rangle\times \cr
&\times  \e^{i2\pi\langle \lambda -\lambda'+\lambda''-\lambda''', \gamma(t)\rangle}\,dt- \frac{1}{\mathcal N_n}\sum_{\lambda\ne \lambda'}a_\lambda \overline{a_{\lambda'}}\int_0^L
\e^{i2\pi\langle \lambda -\lambda', \gamma(t)\rangle}\,dt\cr
&-\frac{1}{2\pi^2n}\int_0^L \frac{(2\pi)^2}{\mathcal N_n}\sum_{\lambda\ne\lambda'} a_\lambda \overline{a_{\lambda'}}
\langle \lambda, \dot \gamma(t)\rangle\langle \lambda', \dot \gamma(t)\rangle \e^{i2\pi\langle \lambda -\lambda', \gamma(t)\rangle}\,dt\cr
&= Z_n^a + Z_n^b,
\end{split}
\end{equation*}
where $Z_n^a$ is defined as in \paref{xyz} and
\begin{equation*}
\begin{split}
Z_n^b := & -\frac{(2\pi)^2}{2\pi^2n\mathcal N_n^2}\sum_{\lambda} (|a_\lambda|^4-2)\int_0^L
\langle \lambda, \dot \gamma(t)\rangle^2\,dt\cr
&+\frac{(2\pi)^2}{2\pi^2n\mathcal N_n^2}\sum_{\lambda,\lambda',\lambda'',\lambda'''\notin S_n(4)} a_\lambda \overline{a_{\lambda'}}a_{\lambda''} \overline{a_{\lambda'''}}\int_0^L
\langle \lambda'', \dot \gamma(t)\rangle\langle \lambda''', \dot \gamma(t)\rangle \e^{i2\pi\langle \lambda -\lambda'+\lambda''-\lambda''', \gamma(t)\rangle}\,dt\cr
&- \frac{1}{\mathcal N_n}\sum_{\lambda\ne \lambda'}a_\lambda \overline{a_{\lambda'}}\int_0^L
\e^{i2\pi\langle \lambda -\lambda', \gamma(t)\rangle}\,dt\cr
&-\frac{1}{2\pi^2n}\int_0^L \frac{(2\pi)^2}{\mathcal N_n}\sum_{\lambda\ne\lambda'} a_\lambda \overline{a_{\lambda'}}
\langle \lambda, \dot \gamma(t)\rangle\langle \lambda', \dot \gamma(t)\rangle \e^{i2\pi\langle \lambda -\lambda', \gamma(t)\rangle}\,dt.
\end{split}
\end{equation*}
\end{proof}

\section{A family of static curves}\label{Sdmitri}

\begin{proposition}
\label{prop:Dima invar 2pi/k}
Let $\Cc\subset\mathbb T$ be a smooth closed curve with nowhere vanishing curvature, invariant w.r.t. rotation by $2\pi/k$ for some $k\ge 3$.
Then $\Cc$ is static.
\end{proposition}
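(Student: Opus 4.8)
The plan is to use the characterization from \cite[Corollary 7.2]{R-W} recalled in the Discussion section: a curve $\Cc$ with nowhere vanishing curvature is static if and only if $I(\gamma) = \int_0^L e^{2i\phi(t)}\,dt = 0$, where $\dot\gamma(t) = e^{i\phi(t)}$. So it suffices to show that the rotational symmetry of $\Cc$ forces $I(\gamma) = 0$. First I would set up coordinates: let $\rho$ denote rotation of $\R^2$ (equivalently of $\C$) by the angle $2\pi/k$, so $\rho(z) = \omega z$ with $\omega = e^{2\pi i/k}$, and the hypothesis is $\rho(\Cc) = \Cc$ as subsets of $\Tb^2$ (one should be slightly careful that the rotation is compatible with the toral quotient, but for the local/infinitesimal computation of $I(\gamma)$ only the behaviour of the tangent direction matters, so I will work with the lift to $\R^2$ or just track $\dot\gamma$).

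The key step is to understand how the symmetry acts on the parametrization. Since $\Cc$ is a smooth closed curve invariant under $\rho$, and $\rho$ has finite order $k$, the arc-length parametrization $\gamma:[0,L]\to\Cc$ can be chosen so that $\rho(\gamma(t)) = \gamma(t + L/k)$ for all $t$ (indices mod $L$): $\rho$ is an orientation-preserving isometry of the curve, hence acts on it as a ``rotation'' by $L/k$ in the arc-length coordinate, after possibly reversing orientation — and orientation reversal is incompatible with $\rho$ having order $k\ge 3$ unless $\rho$ fixes the curve pointwise, which a nontrivial rotation cannot. Differentiating $\rho\circ\gamma(t) = \gamma(t+L/k)$ and using that $\rho$ acts on tangent vectors as multiplication by $\omega$, I get $\omega\,\dot\gamma(t) = \dot\gamma(t+L/k)$, i.e. in angle terms $\phi(t+L/k) = \phi(t) + 2\pi/k \pmod{2\pi}$. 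Hence $e^{2i\phi(t+L/k)} = e^{4\pi i/k} e^{2i\phi(t)} = \omega^2 e^{2i\phi(t)}$.

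Now I would compute $I(\gamma)$ by splitting $[0,L]$ into the $k$ intervals $[jL/k, (j+1)L/k]$ and substituting $t \mapsto t + jL/k$ in each:
\begin{equation*}
I(\gamma) = \sum_{j=0}^{k-1}\int_0^{L/k} e^{2i\phi(t + jL/k)}\,dt = \sum_{j=0}^{k-1}\omega^{2j}\int_0^{L/k} e^{2i\phi(t)}\,dt = \Big(\sum_{j=0}^{k-1}\omega^{2j}\Big)\int_0^{L/k} e^{2i\phi(t)}\,dt.
\end{equation*}
Since $\omega^2 = e^{4\pi i/k}$ is a $k$-th root of unity different from $1$ (here is where $k\ge 3$ enters: for $k\ge 3$, $\omega^2\ne 1$; note $k=2$ would give $\omega^2=1$ and the argument fails, consistent with the fact that a curve symmetric only under the half-turn need not be static), the geometric sum $\sum_{j=0}^{k-1}\omega^{2j}$ vanishes. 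Therefore $I(\gamma) = 0$, and by \cite[Corollary 7.2]{R-W} the curve $\Cc$ is static.

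The main obstacle I anticipate is the bookkeeping in the second paragraph: justifying cleanly that the symmetry can be synchronized with the arc-length parametrization as $\rho\circ\gamma(t)=\gamma(t+L/k)$, ruling out the orientation-reversing alternative, and making sure the passage to the torus does not interfere (the rotation $\rho$ need not descend to a well-defined map of $\Tb^2$, but since static-ness depends only on $\gamma$ through $I(\gamma)$, which is an integral of the tangent direction, it is enough that the symmetry holds for a lift of $\Cc$ to $\R^2$ — or one can phrase the whole argument intrinsically in terms of $\phi$). Once that identification is in place, the rest is the short root-of-unity cancellation above.
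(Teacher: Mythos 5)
Your proposal is correct, and it reaches the conclusion by a slightly different route than the paper: you verify the criterion $I(\gamma)=\int_0^L e^{2i\phi(t)}\,dt=0$ (item (1) of \cite[Corollary 7.2]{R-W}, as quoted in the Discussion), whereas the paper uses the other consequence of that corollary, namely that it suffices to check staticity for the uniform measure, and then shows the stronger pointwise fact $\int_0^L\langle\theta,\dot\gamma(t)\rangle^2\,dt=L/2$ for every $\theta$. The computational core is the same in both arguments: the $k$-fold symmetry lets one split the curve into $k$ congruent arcs and the contribution cancels because $\sum_{j=0}^{k-1}e^{4\pi i j/k}=0$ for $k\ge 3$ (the paper's identity $\sum_j\cos\bigl(2(\theta-\phi(t)+2\pi j/k)\bigr)=0$ is exactly the real part of your geometric sum), and both approaches implicitly use the synchronization $\phi(t+L/k)=\phi(t)+2\pi/k$, so your extra care on that point is welcome rather than a liability. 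Two small remarks on that bookkeeping: a priori the orientation-preserving circle isometry induced by $\rho$ is a shift by $mL/k$ with $\gcd(m,k)=1$ rather than by $L/k$, but this changes nothing (the same geometric-series cancellation goes through, or one notes that iterating would otherwise force a contradiction); and in fact the interval-splitting can be avoided altogether, since the change of variables $t\mapsto t+c$ in the full integral gives $I(\gamma)=\omega^2 I(\gamma)$ directly, hence $I(\gamma)=0$. Your route buys a marginally shorter argument tied to the complex invariant $I(\gamma)$; the paper's route has the mild advantage of never needing to discuss how the symmetry acts on the parametrization beyond the same splitting identity, and of exhibiting the stronger statement that the directional energy is constant in $\theta$.
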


\begin{proof}
To show that $\Cc$ is static it is sufficient \cite[Corollary 7.2]{R-W} to see that
\begin{equation}
\label{eq:B=L^2/4}
B_{\Cc}\left(\frac{d\theta}{2\pi}  \right) = \frac{L^{2}}{4},
\end{equation}
i.e. check the condition \eqref{eqstatic} with $\mu = \frac{d\theta}{2\pi}$ only.
By inverting the order of integration in \eqref{B} we have that \eqref{eq:B=L^2/4} is equivalent to
\begin{equation}
\label{eq:iint <t,gamma>^2=L^2/4}
\int\limits_{0}^{2\pi}\left( \int\limits_{0}^{L}\langle \theta, \dot{\gamma}(t)\rangle ^{2} dt\right)^{2} \frac{d\theta}{2\pi} = \frac{L^{2}}{4}.
\end{equation}
We claim that for every $\theta\in [0,2\pi]$ the integrand above is
\begin{equation}
\label{eq:int <t,gamma>^2=L/2}
\int\limits_{0}^{L}\langle \theta, \dot{\gamma}(t)\rangle ^{2}dt = \frac{L}{2},
\end{equation}
which is certainly sufficient for \eqref{eq:iint <t,gamma>^2=L^2/4}.

To prove it we denote $\dot{\gamma}(t)=:e^{i\phi(t)}$ so that the integrand on the l.h.s. of \eqref{eq:int <t,gamma>^2=L/2} is
\begin{equation*}
\langle \theta, \dot{\gamma}(t)\rangle ^{2} = \cos(\theta-\phi(t))^{2},
\end{equation*}
and by the assumed invariance of $\Cc$ w.r.t. rotations by $2\pi/k$ we have that
\begin{equation}
\label{eq:int0Lcos(t-phi)^2 sep}
\int\limits_{0}^{L}\cos(\theta-\phi(t))^{2}dt
= \int\limits_{0}^{L/k}
\left( \sum\limits_{j=0}^{k-1}\cos\left(\theta-\phi(t)+j\cdot \frac{2\pi}{k}\right)^{2}\right)dt.
\end{equation}
Now, since, by assumption, $k\ge 3$, we have that
\begin{equation*}
\begin{split}
\sum\limits_{j=0}^{k-1}\cos\left(\theta-\phi(t)+j\cdot \frac{2\pi}{k}\right)^{2}
=\frac{k}{2} + \sum\limits_{j=0}^{k-1} \cos\left(2\left(\theta-\phi(t) + \frac{2\pi}{k}\right)\right) = \frac{k}{2},
\end{split}
\end{equation*}
which we substitute into \eqref{eq:int0Lcos(t-phi)^2 sep} to obtain
\begin{equation*}
\int\limits_{0}^{L}\cos(\theta-\phi(t))^{2}dt = \frac{k}{2}\cdot \frac{L}{k} = \frac{L}{2},
\end{equation*}
that is, we obtain \eqref{eq:int <t,gamma>^2=L/2}, which, as it was mentioned above, is sufficient to yield the
statement of Proposition \ref{prop:Dima invar 2pi/k}.

\end{proof}

\bigskip

\noindent \mnew{\sc MAP5-UMR CNRS 8145, Universit\'e Paris Descartes, France}

\noindent \emph{E-mail address:} \mnew{\texttt{maurizia.rossi@parisdescartes.fr}}

\bigskip

\noindent {\sc Department of Mathematics, King's College London, United Kingdom}

\noindent \emph{E-mail address:} \texttt{igor.wigman@kcl.ac.uk}

\end{document}